 \newtheorem{thm}{Theorem}
 \newtheorem{prop}[thm]{Proposition}
 \newtheorem{lemma}[thm]{Lemma}
 \newtheorem{klem}[thm]{Lemma}
 \newtheorem{cor}[thm]{Corollary}
 \theoremstyle{definition}
 \newtheorem{definition}[thm]{Definition}
 \newtheorem{ex}[thm]{Example}
 \theoremstyle{remark}
 \newtheorem{remark}[thm]{Remark}
\numberwithin{thm}{section}
\numberwithin{equation}{section}
\def\Spec{{\rm Spec}}
\def\Aut{{\rm Aut}}
\def\Gr{{\rm Gr}}
\def\GL{{\rm GL}}
\def\HN{{\rm HN}}
\def\BG{{$B_{{\rm dR}}^+$-Grassmannian }}
\def\HT{{\rm HT}}
\def\BB{{\rm BB}}
\def\HHom{{\mathcal{I}som}}
\def\can{{\rm can}}
\def\pr{{\rm pr}}
\def\ad{{\rm ad}}
\def\av{{\rm av}}
\def\Bun{{\rm Bun}}
\def\wa{{\rm wa}}
\def\a{{\rm a}}
\def\dom{{\rm dom}}
\def\Hom{{\rm Hom}}
\def\dim{{\rm dim}\,}
\def\Spa{{\rm Spa}}
\def\Spd{{\rm Spd}}
\def\sst{{\rm ss}}
\def\dR{{\rm dR}}
\def\Z{{\mathbb Z}}
\def\Qp{{F}}
\def\F{{\mathcal F}}
\def\E{{\mathcal E}}
\def\Q{{\mathbb Q}}
\def\O{{\mathcal O}}
\def\BGmub{B(G,\mu,b)}
\long\def\forget#1{}
\begin{document}

\begin{title}
{On Newton strata in the $B_{{\rm dR}}^+$-Grassmannian}
\end{title}

\author{Eva Viehmann}

\address{WWU M\"unster\\Mathematisches Institut \\ Einsteinstr. 62\\48149 M\"unster\\Germany}
\email{viehmann@uni-muenster.de}
\thanks{The author was partially supported by ERC Consolidator Grant 770936:\ NewtonStrat.}

\subjclass[2000]{
11G18 (14G20,   % Linear algebraic groups over local fields and their integers
14M15)}

\begin{abstract}
{We study parabolic reductions and Newton points of $G$-bundles on the Fargues-Fontaine curve and the Newton stratification on the \BG for any reductive group $G$. Let $\Bun_G$ be the stack of $G$-bundles on the Fargues-Fontaine curve. Our first main result is to show that under the identification of the points of $\Bun_G$ with Kottwitz's set $B(G)$, the closure relations on $|\Bun_G|$ coincide with the opposite of the usual partial order on $B(G)$. Furthermore, we prove that every non-Hodge-Newton decomposable Newton stratum in a minuscule affine Schubert cell in the \BG intersects the weakly admissible locus, proving a conjecture of Chen. On the way, we study several interesting properties of parabolic reductions of $G$-bundles, and determine which Newton strata have classical points.}
\end{abstract}
\maketitle

\tableofcontents

\section{Introduction}\label{intro}

In the past years, two main discoveries have revolutionized the field of $p$-adic Hodge theory and of arithmetic geometry: Scholze's construction of perfectoid spaces, and the definition of the fundamental curve of $p$-adic Hodge theory by Fargues and Fontaine. Since then, these have led to significant advances towards understanding local and global Shimura varieties, as well as local Langlands correspondences.

The main geometric objects in this theory are (moduli stacks of) $G$-bundles on the Fargues-Fontaine curve. We fix a prime $p$ and a connected reductive group $G$ over a finite extension $\Qp$ of $\mathbb{Q}_p$. Let $\Bun_G$ be the small v-stack assigning to every perfectoid space $S$ over $\overline{\mathbb{F}}_p$ the groupoid of $G$-bundles on $X_{S}$, compare \cite[Prop.~19.5.3]{SW}. Here, $X_{S}$ is the relative Fargues-Fontaine curve over $S$. For more details we refer to the later sections. We denote the underlying topological space of $\Bun_G$ by $|\Bun_G|$.  Let $C$ be an algebraically closed complete non-Archimedean field over $\overline{\mathbb F}_p$. By \cite{FGtorseurs}, we have a bijection $B(G)\rightarrow \Bun_G(C)$ where $B(G)$ is Kottwitz's set of Frobenius-conjugacy classes of elements of $G(\breve {\Qp})$. Here, $\breve {\Qp}$ denotes the completion of the maximal unramified extension of ${\Qp}$. We obtain a bijection $B(G)\rightarrow |\Bun_G|$, see \cite[Prop.~12.7]{SEt}. We denote the $G$-bundle corresponding to some $[b]\in B(G)$ by $\E_b$.

The set $B(G)$ carries a partial order which also describes the specialization order among $F$-isocrystals with additional structure in characteristic $p$, cf.~\cite{RR}. Using Kottwitz's classification \cite{Kottwitz1}, the elements $[b]\in B(G)$ are described by two invariants. The first is to associate with $b\in G(\breve {\Qp})$ its image under the Kottwitz map $\kappa_G$, an element of $\pi_1(G)_{\Gamma}$, where $\Gamma$ is the absolute Galois group of ${\Qp}$. The second invariant is the Newton point $\nu_b$ of $b$. If $G$ is quasi-split, this is an element of $X_*(T)^{\Gamma}_{\mathbb Q,{\rm dom}}$, where we refer to Section \ref{secnot} for the notation, and also for the analogue for non-quasi-split groups. In terms of these invariants, the partial order is then given by $[b]\leq [b']$ if $\kappa_G(b)=\kappa_G(b')$ and $\nu_{b'}-\nu_b$ is a non-negative rational linear combination of positive coroots. We equip $B(G)$ with the unique topology induced by the opposite of this partial order, i.e.~$[b'']\in \overline{\{[b']\}}$ if and only if $[b']\leq [b'']$, compare Corollary \ref{cortop}.

\begin{thm}\label{thmmain1}
The bijection $|\Bun_G|\rightarrow B(G)$ is a homeomorphism.
\end{thm}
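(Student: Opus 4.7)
The plan is to establish, for every $[b] \in B(G)$, the equality $\overline{\{[b]\}} = \{[b'] \in B(G) : [b] \leq [b']\}$ inside $|\Bun_G|$, which is equivalent to the stated homeomorphism. This decomposes naturally into two inclusions, corresponding to semicontinuity and generization.

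For the semicontinuity direction, I would invoke upper semicontinuity of the Newton stratification on $\Bun_G$. For $G = \GL_n$ this is the Kedlaya--Liu theorem on Newton polygons in families of vector bundles on the relative Fargues--Fontaine curve. For a general reductive $G$, one reduces to $\GL_n$ via a faithful representation: the Newton point of a $G$-bundle is determined by the Newton polygons of its pushouts along representations, and $\kappa_G$ is locally constant on $|\Bun_G|$ by a connectedness argument. Thus $\E_{b'} \in \overline{\{\E_b\}}$ forces both $\kappa_G(b') = \kappa_G(b)$ and $\nu_{b'} \geq \nu_b$, i.e.\ $[b] \leq [b']$.

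The generization direction requires, given $[b] \leq [b']$, producing a family of $G$-bundles whose generic member is $\E_b$ and which specializes to $\E_{b'}$. By transitivity of closure relations one reduces to a covering pair $[b] \lessdot [b']$. I would then exploit the canonical Harder--Narasimhan reduction of $\E_{b'}$ (due to Fargues): a reduction to a standard parabolic $P \subset G$ with Levi quotient $M$ such that the induced $M$-bundle is basic of Newton slope $\nu_{b'}$. Non-split extensions of this $P$-reduction, parameterized by an $H^1$ of the vector bundle on $X_S$ associated to the nilpotent radical of $\mathrm{Lie}(P)$ twisted by the $M$-bundle (a Banach--Colmez space), give rise to families of $G$-bundles whose generic member has strictly smaller Newton point than $\nu_{b'}$; a judicious choice yields the desired generic Newton point $\nu_b$. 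Alternatively, the deformation can be realized via modifications inside a Schubert cell of the \BG, which is the geometric setting the paper focuses on throughout.

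The main obstacle is the generization direction: constructing these deformations explicitly while controlling both the Kottwitz invariant (which must remain constant along the family) and the Newton point of the generic fiber (which must hit $\nu_b$ exactly). This is precisely where the paper's preparatory analysis of parabolic reductions of $G$-bundles on the Fargues--Fontaine curve, and the behavior of their Newton points under such extensions and modifications, plays an essential role.
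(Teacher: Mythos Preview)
Your semicontinuity direction is correct and matches the paper's argument exactly.

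The generization direction, however, has a genuine gap. Your proposed method---taking the Harder--Narasimhan reduction of $\E_{b'}$ to a parabolic $P$ and deforming it via non-split extensions parameterized by an $H^1$ (a Banach--Colmez space)---is precisely the approach of Hansen and \cite{7authors} for $\GL_n$. The paper explicitly notes (end of the introduction) that these methods are tailored to $\GL_n$ and it is not clear how to carry them over to general $G$. The phrase ``a judicious choice yields the desired generic Newton point $\nu_b$'' hides the entire difficulty: for $\GL_n$ this is the main theorem of \cite{7authors}, and for general $G$ the corresponding statement (Theorem~\ref{thmoppfilt}) is \emph{deduced from} Theorem~\ref{thmmain1}, not used to prove it. So your proposed argument is either incomplete (for general $G$) or circular.

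The paper's actual approach is quite different and avoids this obstacle. Rather than deforming a fixed $G$-bundle via parabolic extensions, it works inside the $B_{\dR}^+$-Grassmannian for the \emph{trivial} bundle $\E_1$ and studies the \emph{semi-infinite cells} $S_\lambda$ (Iwasawa cells), not the Schubert cells you mention in passing. The key technical result (Theorem~\ref{propkey}) computes the generic Newton stratum in $S_\lambda$: for $\lambda$ minimal with $[b(\lambda)]=[b']$, the Newton stratum for $[b']$ is open and dense in $S_\lambda$. Given a covering pair $[b']<[b'']$, one uses Chai's combinatorics to find $\lambda'' < \lambda'$ with $[b(\lambda')]=[b']$, $[b(\lambda'')]=[b'']$, both minimal; then the known closure relation $S_{\lambda''}\subset\overline{S_{\lambda'}}$ transports the closure relation to $|\Bun_G|$. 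The point is that instead of controlling a single modification, one controls the Newton point on a dense subset of a well-understood cell whose closure relations are already known.
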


As was pointed out to us by P. Scholze and D. Hansen, this result (together with a deep result of Fargues and Scholze on local charts for $\Bun_G$) immediately implies the following theorem, which proves a conjecture of Chen, \cite[Conj. 2.11]{Chen}. For details compare Section \ref{secoppfilt} below.

\begin{thm}\label{thmoppfilt}
Let $G$ be quasi-split and let $[b]\geq [b']\in B(G)$. Let $P$ be the parabolic subgroup of $G$ with Levi factor $M$ the centralizer of $\nu_{b}$ and such that $\nu_{b}$ is anti-dominant with respect to $P$. Let $b_M\in [b]\cap M(\breve F)$ with $M$-dominant Newton point $\nu_{b}$. Then $\E_{b'}$ has a reduction $(\E_{b'})_P$ to $P$ such that $(\E_{b'})_P\times^P M\cong \E_{b_M}^{M}$ is the $M$-bundle corresponding to the class of $b_M$ in $B(M)$.
\end{thm}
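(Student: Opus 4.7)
The plan is to combine Theorem \ref{thmmain1} with the local chart theorem of Fargues--Scholze for $\Bun_G$. By Theorem \ref{thmmain1}, the homeomorphism $|\Bun_G|\cong B(G)$ translates the hypothesis $[b']\leq [b]$ into the topological assertion that $\E_{b'}$ is a generalization of $\E_b$ in $|\Bun_G|$; equivalently, $\E_{b'}$ lies in every open substack of $\Bun_G$ that contains $\E_b$.

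Next I would use that $[b_M]$ is basic in $B(M)$, since $\nu_b$ is central in $M = Z_G(\nu_b)$. Consequently the stratum $\Bun_M^{[b_M]}$ is open in $\Bun_M$ and consists of the single isomorphism class $\E_{[b_M]}^M$. The Fargues--Scholze input is then the existence of a smooth morphism
$$\pi \colon \Bun_P^{[b_M]} := \Bun_P \times_{\Bun_M} \Bun_M^{[b_M]} \longrightarrow \Bun_G, \qquad \E_P \longmapsto \E_P\times^P G,$$
whose image is an open substack $U \subset \Bun_G$ containing the Newton stratum $\Bun_G^{[b]}$, and in particular the point $\E_b$. Combining the two steps: $U$ is open and contains $\E_b$, hence by the first paragraph it also contains $\E_{b'}$, so $\E_{b'}$ lifts to a point $(\E_{b'})_P \in \Bun_P^{[b_M]}$. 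By construction, $(\E_{b'})_P$ is a $P$-reduction of $\E_{b'}$ whose associated $M$-bundle lies in $\Bun_M^{[b_M]}$, and is therefore isomorphic to $\E_{[b_M]}^M$, as required.

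The only non-formal input is the precise Fargues--Scholze chart statement, specifically the openness of the image of $\pi$ in $\Bun_G$ together with the fact that this image contains the whole stratum $\Bun_G^{[b]}$; this is the main obstacle in that it requires invoking a deep result from \cite{FS}. Granted this, everything else is purely topological, trading specializations in $|\Bun_G|$ for the partial order on $B(G)$ via Theorem \ref{thmmain1}, and then reading off the $M$-part of the reduction from the single isomorphism class in the basic stratum $\Bun_M^{[b_M]}$.
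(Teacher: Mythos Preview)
Your proof is correct and follows essentially the same route as the paper. The paper's $\mathcal M_b$ is exactly your $\Bun_P^{[b_M]}$, and both arguments invoke the Fargues--Scholze result that $\pi_b\colon \mathcal M_b\to \Bun_G$ is cohomologically smooth (hence open on topological spaces), observe that the split bundle $\E_{b_M}^M\times^M P$ maps to $[b]$, and then use Theorem~\ref{thmmain1} to conclude that every $[b']\leq [b]$ lies in the image.
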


Several partial results towards Theorem \ref{thmmain1} have already been obtained in the past years: By results of Kedlaya-Liu \cite[Thm.~7.4.5]{KL} and Scholze-Weinstein \cite[Cor.~22.5.1]{SW}, $\E_{b''}\in \overline{\{\E_{b'}\}}$ implies $\nu_{b'}\leq \nu_{b''}$. In \cite[Thm.~III.2.7]{FS}, Fargues and Scholze prove that the map $\E_b\mapsto \kappa_G(b)$ is locally constant. In particular, the map in the theorem is continuous.

Hansen \cite{Hansen} proves the remaining assertion on openness for $G=\GL_n$, using Theorem \ref{thmoppfilt} for this case. Indeed, the main result of \cite{7authors} is that Theorem \ref{thmoppfilt} holds for $\GL_n$. To prove Theorem \ref{thmmain1}, Hansen starts with a vector bundle corresponding to $[b']$ together with a filtration such that the associated graded vector bundle corresponds to some $[b'']\geq [b']$. He then constructs a family of vector bundles with filtration with the same Newton polygon $\nu_{b'}$ that degenerates into a vector bundle where the associated filtration is split, allowing to conclude that it has the desired bigger invariant $[b'']$. However, the methods used in \cite{7authors} are tailored to the $\GL_n$-case, and it is not clear how to generalize them to other groups.

The idea for the proof of Theorem \ref{thmmain1} is completely different from Hansen's approach. We use families of $G$-bundles constructed as modifications of the trivial $G$-bundle via Beauville-Laszlo uniformization of $\Bun_G$. It turns out to be hard to compute the element in $B(G)$ corresponding to the modification associated with a given point in the $B_{\dR}^+$-Grassmannian $\Gr_{G}$. To circumvent this difficulty, we replace the computation of individual Newton points by that of the Newton point on a dense subset of a suitable semi-infinite cell in $\Gr_G$, see Theorem \ref{propkey}. Then we use the well-known closure relations between semi-infinite cells to conclude.

To explain our second main topic, let $C$ be a complete and algebraically closed extension of $\Qp$. Then we have the Cartan decomposition $$\Gr_G(C)=\coprod_{\{\mu\}}G(B_{\dR}^+(C))\mu(\xi)^{-1}G(B_{\dR}^+(C))/G(B_{\dR}^+(C))$$
where the union is over all conjugacy classes of cocharacters of $G$. It corresponds to a subdivision of $\Gr_G$ into locally spatial sub-diamonds, the affine Schubert cells $\Gr_{G,\mu}$, whose $C$-valued points are the contribution for the respective $\mu$ in the above decomposition.

Let $b\in G(\breve{\Qp})$ be a basic element, i.e. a minimal element of $B(G)$ with respect to the partial order. From the Beauville-Laszlo uniformization, we have for every point $x\in \Gr_{G,\mu}(C)$ a modification $\E_{b,x}$ of the $G$-bundle $\E_{b}$. Subdividing $\Gr_{G,\mu}$ according to the isomorphism class of $\E_{b,x}$ induces a decomposition of $\Gr_{G,\mu}$ into locally closed locally spatial sub-diamonds $\Gr_{G,\mu,b}^{[b']}$ called Newton strata. The set of $[b']\in B(G)$ such that the associated Newton stratum is non-empty is denoted $\BGmub$, for an explicit description compare Definition \ref{eqdefbgmub} and Corollary \ref{remnonempty}. For $b=1$, we obtain the classical Kott\-witz set $B(G,\mu,1)=B(G,-\mu_{\dom})$. Let $[b']\in \BGmub$ be the unique basic element. Then the corresponding Newton stratum  $\Gr_{G,\mu,b}^{[b']}$ is open in $\Gr_{G,\mu}$, and we call it the admissible locus. If $\mu$ is minuscule and $\kappa_G(b)=\mu^{\sharp}$, then $[b']=[1]$ and $\Gr_{G,\mu,b}^{[b']}$ can be identified with the admissible locus $\mathcal F(G,\mu,b)^a$ in the flag variety for $(G,\mu)$ in the sense of Rapoport and Zink \cite{RZ}, compare the constructions by Hartl \cite{Hartl} and Faltings \cite{Faltings}, for those pairs $(G,\mu)$ considered in loc.~cit.

It is a difficult and open question to describe the admissible locus. A first approximation is the weakly admissible locus constructed  by Rapoport and Zink \cite{RZ} and Dat-Orlik-Rapoport \cite{DOR}, who defined an open adic subspace $\mathcal F(G,\mu,b)^{\wa}$ of the adic flag variety associated with $G$ and $\mu$. In Section \ref{secwa} below we define the weakly admissible locus $\Gr_{G,\mu}^{\wa}$ for affine Schubert cells $\Gr_{G,\mu}$. For minuscule $\mu$, it coincides with the classical weakly admissible locus via the Bialynicki-Birula isomorphism $\Gr_{G,\mu}\rightarrow \F(G,\mu)^{\diamond}$. For general $\mu$ it is however not equal to the inverse image of the classical weakly admissible locus under the corresponding Bialynicki-Birula map. The weakly admissible locus is an open subspace of $\Gr_{G,\mu}$ containing the admissible locus, and the two spaces have the same classical points (i.e., points defined over any finite extension of $\breve F$), compare Theorem \ref{propwaa} below. However, even for $\mu$ minuscule the weakly admissible and the admissible locus only coincide in exceptional cases, for so-called fully Hodge-Newton decomposable pairs $(G,\mu)$, see \cite[Thm. 0.1]{CFS}. It is natural to ask for a description of the complement, i.e.~the intersection of the weakly admissible locus with the other Newton strata.

In this context, $[b']\in \BGmub$ is called Hodge-Newton decomposable if its Newton point satisfies $\nu_{b'}^{\sharp_L}=(\nu_b\mu^{-1}_{\dom})^{\sharp_L}\in \pi_1(L)_{\Gamma,\Q}$ for some proper Levi subgroup $L$ of the quasi-split inner form of $G$ containing the centralizer of $\nu_{b'}$. In Proposition \ref{prophonedec} we show that if $[b']$ is Hodge-Newton decomposable, then $L$ corresponds to the Levi subgroup of a parabolic subgroup $L$ of $G$ and every modification $\E_{b,x}$ for $x\in \Gr_{G,\mu,b}^{[b']}(C)$ has a reduction to ${}^{w_0}L$ (as modification, not only as modified bundle). We use this to show that Hodge-Newton decomposable Newton strata do not intersect the weakly admissible locus. Our second main result is that the converse also holds.

\begin{thm}\label{thmmain2}
Let $G$ be a connected reductive group over ${\Qp}$. Let $\{\mu\}$ be a minuscule conjugacy class of cocharacters of $G$, let $b\in G(\breve{\Qp})$ be basic and let $[b']\in \BGmub$. Then the following are equivalent.
\begin{enumerate}
\item $\Gr_{G,\mu,b}^{\wa}\cap \Gr_{G,\mu,b}^{[b']}\neq \emptyset$.
\item $[b']$ is Hodge-Newton indecomposable.
\end{enumerate}
\end{thm}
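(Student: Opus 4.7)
The implication (1)$\Rightarrow$(2) goes by contraposition. Suppose $[b']$ is Hodge-Newton decomposable with witness a proper Levi $L$ of the quasi-split inner form of $G$ containing the centralizer of $\nu_{b'}$. By Proposition \ref{prophonedec}, for every $x\in \Gr_{G,\mu,b}^{[b']}(C)$ the modification $\E_{b,x}$ admits (as a modification) a reduction to the parabolic ${}^{w_0}P$ of Levi ${}^{w_0}L$; in particular $\E_b$ itself carries the corresponding reduction. A direct degree computation against the characters of ${}^{w_0}P$ trivial on ${}^{w_0}L$ shows that this reduction violates the Rapoport-Zink inequality in precisely the direction opposite to weak admissibility, so $x\notin \Gr_{G,\mu,b}^{\wa}$.

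For (2)$\Rightarrow$(1), the plan is to exhibit a weakly admissible point in $\Gr_{G,\mu,b}^{[b']}$ whenever $[b']$ is Hodge-Newton indecomposable. Since $\mu$ is minuscule, the Bialynicki-Birula isomorphism identifies $\Gr_{G,\mu}$ with the diamond of the classical flag variety $\F(G,\mu)$, and the weakly admissible locus defined in Section \ref{secwa} matches the classical one of Rapoport-Zink. I would choose a lift $\tilde{\nu}$ of $\nu_{b'}$ lying in the Weyl orbit of $\mu_{\dom}$ and consider the corresponding semi-infinite orbit. Theorem \ref{propkey} then provides a dense open subset $U$ of this orbit in $\Gr_{G,\mu}$ on which the Newton invariant of the modified $G$-bundle equals $[b']$, so $U\subset \Gr_{G,\mu,b}^{[b']}$. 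Since the weakly admissible locus is open, it suffices to show that $U$ is not entirely contained in its complement.

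Assume instead that $U$ is contained in the closed subset of $\Gr_{G,\mu}$ associated with some reduction of $\E_b$ to a standard parabolic $P$ with Levi $M$, namely the locus where the slope inequality for a character of $P$ is violated. On a dense subset of $U$, the modifications $\E_{b,x}$ then inherit compatible reductions to $P$; using Theorem \ref{thmoppfilt} to identify the Newton invariant of the resulting $M$-bundle shows that $M$ must contain the centralizer of $\nu_{b'}$. Comparing Kottwitz invariants of the $M$-bundles on the two sides of the modification in $\pi_1(M)_{\Gamma,\mathbb{Q}}$ forces $\nu_{b'}^{\sharp_M} = (\nu_b\mu^{-1}_{\dom})^{\sharp_M}$, exhibiting $[b']$ as Hodge-Newton decomposable via $M$ and contradicting the hypothesis. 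The main obstacle will be this last implication: cleanly upgrading a parabolic reduction that exists generically along $U$ to the exact Kottwitz-invariant equality on $\pi_1(M)_{\Gamma,\mathbb{Q}}$, essentially reversing Proposition \ref{prophonedec} along a full semi-infinite cell.
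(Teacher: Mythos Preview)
Your argument for (1)$\Rightarrow$(2) is essentially the paper's: apply Proposition~\ref{prophonedec} to produce a reduction of $\E_{b,x}$ coming from a reduction of $\E_b$, and check that the resulting slope vector violates non-positivity. Fine.

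For (2)$\Rightarrow$(1), however, there is a genuine gap in your use of Theorem~\ref{propkey}. That theorem is a statement about $S_{\lambda,\eta}$ for the \emph{minimal} $\lambda$ with $[b(\lambda)]=[b']$ and for \emph{sufficiently regular} $\eta$; the ambient Schubert cell is $\Gr_{G,(-\eta)_{\dom}}$, not $\Gr_{G,\mu}$. There is no reason the minimal $\lambda$ satisfies $-\lambda\in W\mu$, and in any case the theorem gives you no control inside the given minuscule cell. Your phrase ``a lift $\tilde\nu$ of $\nu_{b'}$ lying in the Weyl orbit of $\mu_{\dom}$'' is not meaningful: $\nu_{b'}$ is rational and generically not in $W\mu$. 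So the dense open $U\subset \Gr_{G,\mu,b}^{[b']}$ you want simply is not supplied by Theorem~\ref{propkey}. Even granting such a $U$, the complement of the weakly admissible locus is a \emph{profinite} union of $G_b(F)$-translates of pieces of semi-infinite cells, not a single closed stratum, so your contradiction step (which you yourself flag as the obstacle) would need much more than a comparison at one parabolic.

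The paper's route is quite different. It first isolates the unique \emph{maximal} Hodge-Newton indecomposable class $[b_{\max}]\in\BGmub$ (Corollary~\ref{cormaxhnind}) and proves $\Gr_{G,\mu,b}^{\wa}\cap\Gr_{G,\mu,b}^{[b_{\max}]}\neq\emptyset$ directly (Theorem~\ref{thmmaxi}). This is done by a dimension argument: one shows that for every non-weakly-admissible $x$ in the $[b_{\max}]$-stratum the induced parabolic reduction is \emph{split} (this uses maximality crucially, via a dichotomy between $[b']=[b'']$ and Hodge-Newton decomposability), and then transports the resulting constraint through the twin-tower diagram $\pi_{\dR},\pi_{\rm HT}$ to bound the dimension of $\pi_{\dR}(\pi_{\rm HT}^{-1}(Z))$ strictly below $\langle 2\rho,\mu\rangle$. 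The passage from $[b_{\max}]$ to an arbitrary Hodge-Newton indecomposable $[b_0]$ then uses Theorem~\ref{thmmain1} via Corollary~\ref{corclosns}: if $\Gr_{G,\mu,b}^{[b_0]}$ missed the weakly admissible locus, so would its closure, which contains $\Gr_{G,\mu,b}^{[b_{\max}]}$. Thus Theorem~\ref{propkey} enters only indirectly, through the proof of Theorem~\ref{thmmain1}, and never as a density statement inside $\Gr_{G,\mu}$.
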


Several people have been working on this question before, and there are a number of partial results available. In \cite[Thm.~5.1]{Chen} Chen observes that  the proof of \cite[Thm.~6.1]{CFS} yields the following  (although the assertion in \cite{CFS} is slightly different):
Firstly, non-emptiness of $\Gr_{G,\mu,b}^{\wa}\cap \Gr_{G,\mu,b}^{[b']}$  implies Hodge-Newton indecomposability of $[b']$. In \cite[Conj.~5.2]{Chen}, Chen conjectures the assertion of Theorem \ref{thmmain2}, at least if $G$ is quasi-split and $\nu_b=\mu^{\sharp}$ as elements of $\pi_1(G)_{\Gamma,\Q}$. Chen, Fargues and Shen prove this assertion for Hodge-Newton indecomposable $[b']$ that are minimal in $\BGmub\setminus\{[b'_0]\}$ with respect to the usual partial order. Here, $[b'_0]$ denotes the unique basic element in $\BGmub$. Finally, Chen \cite[Prop. 5.3]{Chen} proves several particular cases for the group $G=\GL_n$ and explicit elements $[b']$. Shen \cite{Shen} proves a variant of the results of \cite{CFS} for non-minuscule $\mu$.

The approach we take to prove Theorem \ref{thmmain2} is quite the opposite of trying to generalize the existing proofs. Whereas previously, only elements close to the basic $[b'_0]\in \BGmub$ were considered, our main step is to prove the theorem for the maximal Hodge-Newton indecomposable element of $\BGmub$. We then use Theorem \ref{thmmain1} to deduce the general assertion.

Another interesting outcome of our study of the weakly admissible locus and its classical points is Theorem \ref{propclpt}, which characterizes all Newton strata $\Gr_{G,\mu,b}^{[b']}$ having classical points.\\

\noindent\emph{Acknowledgment.} I thank Miaofen Chen, Laurent Fargues, Paul Hamacher, David Hansen, Urs Hartl, Kieu Hieu Nguyen, Michael Rapoport and Timo Richarz for helpful discussions and David Hansen and Peter Scholze for making preliminary versions of \cite{Hansen2} resp.~of \cite{FS} available to me. I am grateful to Miaofen Chen, Qihang Li and the referees for pointing out some inaccuracies in the first version of this paper.

\section{Background}

\subsection{Notation}\label{secnot}
Let $F$ be a finite extension of $\mathbb Q_p$. We fix a uniformizer $\pi$ of $F$. 
Let $\breve{\Qp}$ be the completion of the maximal unramified extension of ${\Qp}$. Let $\Gamma$ denote the absolute Galois group of ${\Qp}$.

Let $G$ be a connected reductive group over ${\Qp}$ and $H$ a quasi-split inner form. Fix an inner twisting $G_{\breve{\Qp}}\overset{\sim}{\rightarrow} H_{\breve{\Qp}}.$

Let $A$ be a maximal split torus of $H$. Let $T$ be its centralizer, and let $B$ be a Borel subgroup of $H$ containing $T$. Let $U$ be its unipotent radical.

For quasi-split $G$ we denote by $W$ the Weyl group with respect to $T$, and by $w_0$ its longest element. For $w\in W$ and $\ast$ any element or subset of $G$, we denote by ${}^w\ast$ the conjugate of $\ast$ by $w$. 

Denote by $(X^*(T),\Phi,X_*(T),\Phi^{\vee})$ the absolute root datum, by $\Phi^+$ the positive roots and by $\Delta$ the simple roots of $T$  with respect to $B$.

Further, $(X^*(A),\Phi_0,X_*(A),\Phi_0^{\vee})$ denotes the relative root datum, $\Phi_0^+$ the positive roots and $\Delta_0$ the simple (reduced) roots.

Let
\begin{equation}\label{eqnewtonset}
\mathcal N(G)=(\Hom(\mathbb D_{\overline {\Qp}}, G_{\overline {\Qp}})/G(\overline {\Qp}))^{\Gamma}
\end{equation}
where $\mathbb D$ is the pro-torus with character group $\Q$, and where $G(\overline {\Qp})$ acts by conjugation. Then the inner twisting induces an identification $\mathcal N(G)=\mathcal N(H)=X_*(A)_{\mathbb Q,\dom}$.

On $X_*(A)_{\mathbb Q}$ resp.~$X_*(T)_{\Q}$ we consider the partial order given by $\nu\leq \nu'$ if $\nu'-\nu$ is a non-negative rational linear combination of positive relative resp.~absolute coroots. For this we do not assume that the elements are dominant.

We caution the reader that we use both the additive and the multiplicative notation for elements of $X_*(A)$ and $X_*(T)$, and sometimes switch from one to the other.

We denote by $B(G)$ the set of $G(\breve{\Qp})$-$\sigma$-conjugacy classes of elements of $G(\breve{\Qp})$. These are classified by two invariants, compare \cite{Kottwitz1}, \cite{Kottwitz2}, \cite{RR}. The first is the Kottwitz map $\kappa_G:B(G)\rightarrow \pi_1(G)_{\Gamma}$, where $\pi_1(G)=\pi_1(H)$ is the quotient of $X_*(T)$ by the coroot lattice. The second is the Newton map $\nu:B(G)\rightarrow \mathcal N(G)$. Then the map $$(\kappa_G,\nu):B(G)\rightarrow \pi_1(G)_{\Gamma}\times \mathcal N(G)$$ is injective. The two invariants are related by the condition that for every $[b]\in B(G)$, the image of $[b]$ under
$$B(G)\overset{\nu}{\rightarrow} \mathcal N(G)\cong X_*(A)_{\Q,\dom}\rightarrow X_*(T)_{\Q}\rightarrow \pi_1(G)_{\Gamma,\Q}$$ agrees with the image of $\kappa_G([b])$. Here, the above maps are the Newton map, the natural inclusion and the natural projection.

The set $B(G)$ has a partial order. It is defined by $[b]\leq [b']$ if $\kappa_G(b)=\kappa_G(b')$ and $\nu_b\leq \nu_{b'}$.

\begin{definition}
Let $[b]\in B(G)$ be basic and $\{\mu\}$ a conjugacy class of cocharacters of $G$. We write $\mu\in X_*(T)$ for the dominant representative. Then let
\begin{equation}\label{eqdefbgmub}
\BGmub:=\{[b']\in B(G)\mid \kappa_G(b')=\kappa_G(b)-\mu^{\sharp}, \nu_{b'}\leq \nu_b(\mu^{-1,\diamond})_{\dom}\}.
\end{equation}
 Here $\mu^{\diamond}$ is the Galois average of $\mu$ and $\mu^{\sharp}=\mu^{\sharp_G}$ is the image of $\mu$ in $\pi_1(G)_{\Gamma}$. These subsets inherit a partial order from the partial order on $B(G)$.
\end{definition}
\begin{remark}
\begin{enumerate}
\item For $[b]=[1]$ we obtain $B(G,\mu,1)=B(G,(-\mu)_{\dom})$.
\item In \cite[4]{CFS}, Chen, Fargues and Shen write ${B(G,\kappa_G(b)-\mu^{\sharp},\nu_b(\mu^{-1,\diamond})_{\dom})}$ for the set that we denote by $\BGmub$.
\item These subsets parametrize the non-empty Newton strata in a given affine Schubert cell, compare Corollary \ref{remnonempty} below.
 \end{enumerate}
\end{remark}

\subsection{The \BG}\label{secthebg}
We recall from \cite[3.4]{CS} and \cite[19]{SW} the \BG and its decomposition into affine Schubert cells.

For a perfectoid affinoid $\Qp$-algebra $(R,R^+)$ consider the surjective map $W_{\O_F}(R^{\flat,+})\rightarrow R^+$ and let $\xi\in W_{\O_F}(R^{\flat,+})$
be a generator of its kernel. Then we denote $B_{\dR}^+(R)$ the $\xi$-adic completion of $W_{\mathcal O_F}(R^{\flat,+})[1/\pi]$ and $B_{\dR}(R)=B_{\dR}^+(R)[\xi^{-1}]$.

The \BG $\Gr_G$ of $G$ over $\Spa~{\Qp}$ is the sheaf for the pro-\'etale topology representing the functor that maps any affinoid perfectoid ${\Qp}$-algebra $(R,R^+)$ to the set of pairs consisting of a $G$-torsor $\E$ on $\Spec B_{\dR}^+(R)$ and of a trivialization of $\E|_{\Spec B_{\dR}(R)}$. It is also the \'etale sheafification of the functor mapping a pair as above to $G(B_{\dR}(R))/G(B^+_{\dR}(R)).$

Let $C$ be an algebraically closed and complete extension of $\Qp$. Then we also write $B_{\dR}=B_{\dR}(C)$ and $B^+_{\dR}=B^+_{\dR}(C)$. Choosing an isomorphism $B_{\dR}^+\cong C [\![\xi]\!]$, the Cartan decomposition gives a disjoint decomposition $$\Gr_G(C)=\coprod_{\{\mu\}}G(B_{\dR}^+)\mu^{-1}(\xi)G(B_{\dR}^+)/G(B_{\dR}^+)$$ where the union is over all conjugacy classes of cocharacters of $G$.

Let $\{\mu\}$ be a conjugacy class of cocharacters of $G$, and let $E$ be its field of definition. We recall the affine Schubert cell associated with $\mu$. It is defined as the subfunctor $\Gr_{G,\mu}$ of $\Gr_{G,E}$ assigning to $S$ the set of all maps $S\rightarrow \Gr_G$ such that for all complete and algebraically closed $C$, and any $\Spa(C,{C}^+)\rightarrow S$, the corresponding element of $\Gr_G(C)$ lies in $G(B_{\dR}^+)\mu(\xi)^{-1}G(B_{\dR}^+)/G(B_{\dR}^+)$. Furthermore, let $\Gr_{G,\leq\mu}$ be defined similarly, using a union of $G(B_{\dR}^+)$-cosets over all $\mu'\leq\mu$. By \cite[19.2]{SW}, $\Gr_{G,\leq \mu}$ is a spatial diamond and proper over $\Spd~E$, and $\Gr_{G,\mu}$ is a locally spatial diamond which is open in $\Gr_{G,\leq \mu}$.

For the rest of Section \ref{secthebg} assume that $G$ is quasi-split and let $T,B,$ and $U$ be as above. Let $K$ be a complete field extension of $\Qp$ such that $G$ is split over $K$.
\begin{definition}
Let $\eta\in X_*(T)_{\dom}$. Let $S_{\eta,\eta}$ be the subfunctor of $\Gr_{G,(-\eta)_{\dom},K}$ assigning to a perfectoid space $S$ over $\Spa~K$ the set of maps $S\rightarrow \Gr_{G,(-\eta)_{\dom},K}$ such that each geometric point $\Spa(C',(C')^+)\rightarrow S$ corresponds to an element of $$U(B_{\dR}^+(C'))\eta(\xi)G(B_{\dR}^+(C'))/G(B_{\dR}^+(C'))\subset G(B_{\dR}^+(C'))\eta(\xi)G(B_{\dR}^+(C'))/G(B_{\dR}^+(C')).$$
\end{definition}
 \begin{prop}\label{propslambdaeta}
The map $S_{\eta,\eta}\rightarrow \Gr_{G,(-\eta)_{\dom},K}$ is an open immersion. In particular, $S_{\eta,\eta}$ is a locally spatial diamond. Furthermore, it is $\ell$-cohomologically smooth of dimension $\langle 2\rho,\eta\rangle$.
\end{prop}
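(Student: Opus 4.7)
The plan is to realize $S_{\eta,\eta}$ explicitly as the orbit of the base point $[\eta(\xi)]\in\Gr_G$ under left multiplication by the positive loop group $L^+U := U(B_\dR^+)$, parametrize this orbit by an explicit affine-space-like diamond, and then verify that the resulting inclusion into $\Gr_{G,(-\eta)_\dom,K}$ is an open immersion via a dimension comparison with the ambient affine Schubert cell.

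First I would fix a normal ordering $\alpha_1,\dots,\alpha_N$ of $\Phi^+$, yielding a product isomorphism $\prod_{i=1}^N U_{\alpha_i}\cong U$ of $K$-schemes, and hence $U(B_\dR^+(R))\cong \prod_i U_{\alpha_i}(B_\dR^+(R))$ for any affinoid perfectoid $(R,R^+)/K$. Using the root-subgroup relation $\eta(\xi)\,u_\alpha(x)\,\eta(\xi)^{-1} = u_\alpha(\xi^{\langle\alpha,\eta\rangle}x)$ together with the dominance of $\eta$ (so $\langle\alpha_i,\eta\rangle\geq 0$ for each $i$), the stabilizer of $[\eta(\xi)]$ in $L^+U$ under left multiplication equals $\prod_i U_{\alpha_i}(\xi^{\langle\alpha_i,\eta\rangle}B_\dR^+)$. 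Hence the orbit $L^+U\cdot[\eta(\xi)]$ is represented on geometric points by $\prod_i U_{\alpha_i}(B_\dR^+)/U_{\alpha_i}(\xi^{\langle\alpha_i,\eta\rangle}B_\dR^+)$, which tautologically coincides with $S_{\eta,\eta}$ by its definition. Each factor $U_{\alpha_i}(B_\dR^+)/U_{\alpha_i}(\xi^{n_i}B_\dR^+)$ for $n_i := \langle\alpha_i,\eta\rangle$ is the additive group of $B_\dR^+/\xi^{n_i}B_\dR^+$, representable by the diamond $(\mathbb A_K^{n_i})^\diamond$; taking the product yields $(\mathbb A_K^{\langle 2\rho,\eta\rangle})^\diamond$, a locally spatial diamond cohomologically smooth of $\ell$-dimension $\sum_i\langle\alpha_i,\eta\rangle = \langle 2\rho,\eta\rangle$.

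This gives an injective morphism $\varphi:(\mathbb A_K^{\langle 2\rho,\eta\rangle})^\diamond\to\Gr_{G,(-\eta)_\dom,K}$ whose image is exactly $S_{\eta,\eta}$. By \cite[19.2]{SW}, $\Gr_{G,(-\eta)_\dom}$ is a locally spatial diamond cohomologically smooth of $\ell$-dimension $\langle 2\rho,(-\eta)_\dom\rangle = \langle 2\rho,\eta\rangle$. To conclude the proposition it then suffices to show that $\varphi$ is \'etale, since then it is automatically an open immersion (being injective), and the asserted smoothness and dimension of $S_{\eta,\eta}$ follow at once.

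The main obstacle is precisely this last step: verifying that $\varphi$ is \'etale, rather than just a bijective monomorphism on geometric points between smooth diamonds of equal $\ell$-dimension. In the classical algebraic setting the corresponding statement follows from the Bruhat/Iwasawa decomposition of the affine Grassmannian, which stratifies $\Gr_{G,\mu}$ by the semi-infinite orbits $S_{w\eta,\eta}$ indexed by Weyl translates $w\eta$ and identifies $S_{\eta,\eta}$ as the unique open stratum, all others having strictly smaller dimension. I would transport this stratification argument to the diamond setting by combining properness of $\Gr_{G,\leq(-\eta)_\dom}$ from \cite[19.2]{SW} with standard openness criteria for morphisms of cohomologically smooth locally spatial diamonds.
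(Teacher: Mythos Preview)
Your approach differs substantially from the paper's, and the difference is instructive. The paper does \emph{not} attempt to parametrize the orbit explicitly or to prove \'etaleness by a dimension count. Instead, it observes that since $\eta$ is dominant, the condition $x\in U(B_{\dR}^+)\eta(\xi)G(B_{\dR}^+)/G(B_{\dR}^+)$ is equivalent to $x\in G^1(B_{\dR}^+)U(B_{\dR}^+)\eta(\xi)G(B_{\dR}^+)/G(B_{\dR}^+)$, where $G^1$ is the kernel of reduction modulo $\xi$; this is in turn equivalent to the Bialynicki--Birula image $\BB_{(-\eta)_{\dom}}(x)$ lying in the open Schubert cell of the flag variety $\F(G,(-\eta)_{\dom})$. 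Thus $S_{\eta,\eta}$ is simply the preimage under $\BB$ of an open subscheme, and openness is immediate. Smoothness and dimension are then inherited from the ambient Schubert cell via \cite[V.1.4]{FS}, with no explicit computation needed.

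Your route, by contrast, gives a direct parametrization $S_{\eta,\eta}\cong(\mathbb A_K^{\langle 2\rho,\eta\rangle})^\diamond$, which is pleasant and yields the dimension intrinsically. But the step you flag as the ``main obstacle'' is a genuine gap: in the diamond setting there is no general theorem guaranteeing that an injective morphism between cohomologically smooth locally spatial diamonds of equal $\ell$-dimension is \'etale or an open immersion. Your proposed workaround---transporting the classical semi-infinite stratification and invoking properness of $\Gr_{G,\leq(-\eta)_{\dom}}$---would require you to establish that the lower-dimensional strata $S_{w\eta,\eta}$ are locally closed of strictly smaller dimension and that their union with $S_{\eta,\eta}$ exhausts $\Gr_{G,(-\eta)_{\dom}}$, all in the diamond category; this is considerably more work than the paper's one-line reduction via $\BB$, and you have not carried it out. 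The paper's argument avoids the issue entirely by never needing to know that $\varphi$ is \'etale: openness comes for free from the flag-variety side.
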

Here, for the notions of $\ell$-cohomological smoothness and dimension we refer to \cite{SEt}. For the proof of Proposition \ref{propslambdaeta} we need some preparation.

\begin{remark}\label{remBB}
Let $E$ be the field of definition of the conjugacy class of $\mu$. We denote by $\F(G,\mu)$ the associated flag variety over $E$. Let $b\in G(\breve{\Qp})$.
By \cite[Prop.~19.4.2]{SW} (and \cite[3.4]{CS} for $G=\GL_n$) there is a natural Bialynicki-Birula map
$$\BB=\BB_{\mu} :\Gr_{G,\mu}\rightarrow \F(G,\mu)^{\diamond}.$$ If $\mu$ is minuscule, ${\rm BB}_{\mu}$ is an isomorphism, \cite[Prop.~19.4.2]{SW}. Let $C$ be a complete field extension of $\breve{\Qp}$. For $x\in \Gr_{G,\mu}(C)$ we have the following intrinsic description. We write $$x=x_1\mu^{-1}(\xi)G(B_{\dR}^+)/G(B_{\dR}^+)$$ with $x_1\in G(B_{\dR}^+)$. Then the class of $x_1$ in $G(B_{\dR}^+)/(G(B_{\dR}^+)\cap \mu^{-1}(\xi)G(B_{\dR}^+)\mu(\xi))$ is uniquely determined by $x$, and $\BB(x)$ is its image under the map to $\F(G,\mu)^{\diamond}(C)$ induced by the reduction $G(B_{\dR}^+)\rightarrow G(C)$.
\end{remark}
\begin{remark}
Let $G$ be quasi-split. We consider the decomposition of the flag variety $\F(G,\mu)$ into Schubert cells for the action of $B$. Let $P_{\mu}$ be the parabolic subgroup (containing $B$) corresponding to some $\mu\in X_*(T)_{\dom}$ and let $W_{\mu}$ be the corresponding subgroup of the Weyl group $W$ of $G$. Then $$\F(G,\mu)=\bigcup_{w\in W/W_{\mu}}\F(G,\mu)^w$$ with $\F(G,\mu)^w(C)=U(C) w P_{\mu}(C)/P_{\mu}(C)$.
\end{remark}

\begin{proof}[Proof of Proposition \ref{propslambdaeta}]
Since $\eta$ is assumed to be dominant, the defining condition on $C'$-points is equivalent to being an element of $G^1(B_{\dR}^+)U(B_{\dR}^+)\eta(\xi)G(B_{\dR}^+)/G(B_{\dR}^+)$ where $G^1(B_{\dR}^+)\subseteq G(B_{\dR}^+)$ is the kernel of the reduction modulo $\xi$. This in turn is equivalent to $\BB_{(-\eta)_{\dom}}(x)$ being in the open Schubert cell in the flag variety for $(-\eta)_{\dom}$. This implies the first assertion, and the second is an immediate consequence, since $\Gr_{G,(-\eta)_{\dom}}$ is a locally spatial diamond.

The assertion on smoothness and dimension follows from the same assertions for $\Gr_{G,\mu}$ in \cite[VI.2.4]{FS}.\end{proof}

The Iwasawa decomposition induces a decomposition
$$\Gr_G(C)=\coprod_{\lambda\in X_*(T)} U(B_{\dR})\lambda(\xi)G(B_{\dR}^+)/G(B_{\dR}^+).$$

\begin{definition}
Let $\lambda\in X_*(T)$.
\begin{enumerate}
\item For $\eta\in X_*(T)_{\dom}$ we define $S_{\lambda,\eta}$ to be the locally spatial sub-diamond $\lambda(\xi)\eta(\xi)^{-1}S_{\eta,\eta}\subset \Gr_{G,C}$.
\item For $\lambda\in X_*(T)$ let $S_{\lambda}$ be the subsheaf of $\Gr_{G,C}$ such that $S\rightarrow \Gr_{G,C}$ is in $S_{\lambda}$ if each geometric point $\Spa(C',(C')^+)\rightarrow S$ corresponds to an element of $$U(B_{\dR}(C'))\lambda(\xi)G(B_{\dR}^+(C'))/G(B_{\dR}^+(C')).$$
\end{enumerate}
\end{definition}

\begin{prop}
$S_{\lambda}$ is an ind-diamond, and
\begin{equation}\label{eqSle}
S_{\lambda}=\underset{\underset{\eta\in X_*(T)_{\dom}}{\rightarrow}}{\lim}S_{\lambda,\eta}
\end{equation}
as v-sheaves.
\end{prop}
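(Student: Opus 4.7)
The plan is to verify the equality \eqref{eqSle} first at the level of geometric points and upgrade to v-sheaves, and then separately to exhibit $S_\lambda$ as a filtered colimit of the $S_{\lambda,\eta}$ along closed immersions; since each $S_{\lambda,\eta}$ is a locally spatial diamond by Proposition \ref{propslambdaeta} (translated by $\lambda(\xi)\eta(\xi)^{-1}$), the colimit will be an ind-diamond.

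For the first step, fix a geometric point $\Spa(C',(C')^+)$. The key observation is that
\[
U(B_{\dR}(C')) \;=\; \bigcup_{\eta \in X_*(T)_{\dom}} \eta(\xi)^{-1}\, U(B_{\dR}^+(C'))\, \eta(\xi),
\]
where the index set is filtered by declaring $\eta\leq \eta'$ iff $\eta'-\eta$ is dominant (any two $\eta,\eta'$ are dominated by $\eta+\eta'$). Indeed, writing $u\in U(B_{\dR}(C'))$ in root coordinates $u=\prod_{\alpha>0} u_\alpha(x_\alpha)$ with $x_\alpha\in B_{\dR}(C')$, one has $\eta(\xi)\,u\,\eta(\xi)^{-1}=\prod u_\alpha(\xi^{\langle\alpha,\eta\rangle}x_\alpha)$, which lies in $U(B_{\dR}^+(C'))$ as soon as $\langle\alpha,\eta\rangle$ is large enough that each $\xi^{\langle\alpha,\eta\rangle}x_\alpha\in B_{\dR}^+(C')$; this is achievable since $B_{\dR}(C')=\bigcup_n \xi^{-n} B_{\dR}^+(C')$. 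Multiplying on the right by $\lambda(\xi)$ and using that $\lambda(\xi)$ and $\eta(\xi)$ commute in $T(B_{\dR}(C'))$ gives
\[
U(B_{\dR}(C'))\lambda(\xi) \;=\; \bigcup_\eta \lambda(\xi)\,\eta(\xi)^{-1}\,U(B_{\dR}^+(C'))\,\eta(\xi),
\]
and passing to the quotient by $G(B_{\dR}^+(C'))$ yields $S_\lambda(C')=\bigcup_\eta S_{\lambda,\eta}(C')$. Since both $S_\lambda$ and the colimit of the sub-v-sheaves $S_{\lambda,\eta}$ of $\Gr_{G,C}$ are determined by their behaviour on geometric points, this upgrades to the v-sheaf equality \eqref{eqSle}.

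For the ind-diamond structure, I would check that for $\eta\leq\eta'$ dominant the inclusion $S_{\lambda,\eta}\hookrightarrow S_{\lambda,\eta'}$ is a closed immersion. A point $\lambda(\xi)\eta(\xi)^{-1} u' \eta(\xi) G(B_{\dR}^+)$ of $S_{\lambda,\eta}$ with $u'\in U(B_{\dR}^+)$ rewrites as $\lambda(\xi)\eta'(\xi)^{-1}u''\eta'(\xi)G(B_{\dR}^+)$ where $u''=(\eta'-\eta)(\xi)u'(\eta'-\eta)(\xi)^{-1}=\prod u_\alpha(\xi^{\langle\alpha,\eta'-\eta\rangle}y_\alpha)$ if $u'=\prod u_\alpha(y_\alpha)$. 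Thus conjugation identifies $U(B_{\dR}^+)$ with the closed subgroup $\prod_{\alpha>0} U_\alpha(\xi^{\langle\alpha,\eta'-\eta\rangle}B_{\dR}^+)\subseteq U(B_{\dR}^+)$. Identifying $S_{\lambda,\eta'}$ with $U(B_{\dR}^+)$ modulo the stabilizer of $\eta'(\xi)G(B_{\dR}^+)$, which equals $\prod_\alpha U_\alpha(\xi^{\langle\alpha,\eta'\rangle}B_{\dR}^+)$ and is contained in the above subgroup, the subspace $S_{\lambda,\eta}$ appears as a closed sub-diamond. As the index set is filtered and every term is a locally spatial diamond, we conclude that $S_\lambda$ is an ind-diamond.

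The main technical issue is the precise identification of each $S_{\eta,\eta}$ (hence $S_{\lambda,\eta}$) as the quotient $U(B_{\dR}^+)/\prod_\alpha U_\alpha(\xi^{\langle\alpha,\eta\rangle}B_{\dR}^+)$; without this one could not directly read off that the transition maps are closed immersions. Once this identification is in place, both claims reduce to bookkeeping with root coordinates and the commutativity of $T$.
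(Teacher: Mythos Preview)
Your pointwise computation with root coordinates is correct and shows that $S_\lambda(C')=\bigcup_\eta S_{\lambda,\eta}(C')$ for every algebraically closed $C'$, but the sentence ``since both \ldots\ are determined by their behaviour on geometric points, this upgrades to the v-sheaf equality'' is where the argument breaks. The colimit $\varinjlim_\eta S_{\lambda,\eta}$ in v-sheaves is the sheafification of the presheaf $S\mapsto \varinjlim_\eta S_{\lambda,\eta}(S)$; knowing that each geometric point of $S$ lies in some $S_{\lambda,\eta}$ (with $\eta$ depending on the point) does \emph{not} show that a given $S\to S_\lambda$ factors, even v-locally, through a single $S_{\lambda,\eta}$. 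Concretely, in your root-group picture the required $\eta$ depends on the valuations of the coordinates $x_\alpha$, and nothing in your argument bounds these uniformly over a test object $S$.

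This is exactly the gap the paper's proof fills: one takes an affinoid perfectoid $S\to S_\lambda$, uses that $\Gr_G=\varinjlim_\mu \Gr_{G,\leq\mu}$ (an ind-structure by proper spatial diamonds) to factor $S$ through some $\Gr_{G,\leq\mu}$, and then produces a single $\eta$ depending only on $\mu$ such that $S_\lambda\cap\Gr_{G,\leq\mu}\subseteq S_{\lambda,\eta}$ on geometric points. The paper does this last step by embedding $G\hookrightarrow\GL_n$ and bounding matrix entries; your root-coordinate calculation could be adapted to the same end, but only after you have the uniform Schubert bound from quasi-compactness. Once that is in place, the v-sheaf equality follows because every section over an affinoid perfectoid already lies in a fixed term of the colimit, and the ind-diamond claim then comes essentially for free from the ind-structure of $\Gr_G$ without needing your separate closed-immersion argument (and the ``technical issue'' you flag).
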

\begin{proof}
Multiplying by $\lambda(\xi)^{-1}$ we may assume that $\lambda=1$. Let $S$ be affinoid perfectoid and consider a morphism $S\rightarrow S_{\lambda}$. Since $\Gr_G=\underset{\rightarrow_{\mu}}{\lim}\Gr_{G,\leq\mu}$, it induces a morphism $S\rightarrow \Gr_{G,\leq \mu}$ for some $\mu$. Since $\lambda=1$, the image of $\mu$ in $\pi_1(G)$ is trivial. We want to show that the above morphism factors through some $S_{\lambda,\eta}$. It is enough to show that for $\mu$ as above there is an $\eta\in X_*(T)_{\dom}$ such that every geometric point of $\Gr_{G,\leq \mu}$ that lies in $S_{\lambda}$ is in fact a geometric point of $S_{\lambda,\eta}$.

We choose a faithful representation $G\rightarrow \GL_n$ for some $n$, and may thus assume that $G=\GL_n$. Recall that we consider $\mu$ with trivial image in $\pi_1(G)$. Replacing $\mu$ by a larger element we may assume that it is of the form $((n-1)a,-a,\dotsc, -a)$ for some $a>0$. Then $x\in \Gr_{G,\leq \mu}(C)$ if and only if all entries of any representing matrix (with coefficients in $B_{\dR}(C)\cong C(\!(\xi)\!)$) have valuations greater than or equal to $-a$. The point $x$ lies in $S_{1}(C)$ if this representing matrix can be chosen in $U(B_{\dR})$. These conditions together imply that $x\in S_{1,\eta}(C)$ for $\eta=2a\rho^{\vee}$, i.e.~$\langle \alpha,\eta\rangle=a$ for every simple root $\alpha$.
\end{proof}
\begin{remark}\label{propmvne}
\begin{enumerate}
\item By \cite[Prop.~6.4]{Shen}, $S_{\lambda}$ is locally closed and $$\overline {S_{\lambda}}=\bigcup_{\lambda'\leq \lambda}S_{\lambda'}.$$  
\item From the Iwasawa decomposition we obtain $\Gr_{G,C}=\coprod_{\lambda}S_{\lambda}$.
\item If the intersection $S_{\lambda}\cap \Gr_{G,\leq\mu}$ is non-empty then the same proof as in the classical case shows that $\lambda_{\dom}\leq(-\mu)_{\dom}$.
\end{enumerate}
\end{remark}

\section{The Newton stratification and parabolic reductions}

\subsection{Modifications of $G$-bundles and Newton strata}\label{secmodgb}

We recall the construction of modifications of $G$-bundles and Newton strata in the adic flag variety. For more details, compare \cite[3.5]{CS}, \cite[4.2]{FQuelques} or \cite[III.3]{FS}.

Let $S$ be a perfectoid space in characteristic $p$. Then we have the associated relative Fargues-Fontaine curve, compare \cite[11]{SW}. It can be defined as $X_S=Y_S/\phi^{\Z}$ where for affinoid $S=\Spa(R,R^+)$, we have $$Y_S=\Spa~W_{\O_F}(R^+)\setminus \{[\varpi]\pi=0\}.$$
Here, $\varpi$ is a pseudo-uniformizer of $R$, and $\pi$ the chosen uniformizer of $F$. For $S=\Spa~(C_0,C^+_0)$ for a complete and algebraically closed non-archimedean field $C_0$ we also write $X$ instead of $X_S$.

A $G$-bundle on $X_S$ is defined as an exact tensor functor from the category of representations of $G$ to the category of vector bundles on $X_S$ or directly as a $G$-torsor locally trivial for the \'etale topology.

We denote by $\Bun_G$ the small v-stack of $G$-bundles on the Fargues-Fontaine curve. It assigns to every perfectoid space $S$ over $\overline{\mathbb F}_p$ the groupoid of $G$-bundles on $X_{S}$. For details we refer to \cite[3]{FS}. We denote the underlying topological space by $|\Bun_G|$.

By \cite{FGtorseurs} (and \cite[Prop.~12.7]{SEt}) we have a bijection between $|\Bun_G|$ and the set $B(G)$. To translate \cite[Def. 1.1]{FGtorseurs} into our terms, let $b\in G(\breve F)$. Let $\E_b$ be the $G$-bundle on $X$ obtained by taking the descent of the trivial $G$-torsor on $Y$ via the Frobenius map $(b\sigma)\otimes \phi$, where $\phi$ is the Frobenius on $Y$.

Let us also recall the algebraic Fargues-Fontaine curve. Let $\mathcal O(n)$ be the line bundle on $X$ for $b=\pi^{-n}$.
Let $P=\bigoplus_{n\geq 0}H^0(X,\mathcal O(n))$. The summand for some $n$ is equal to $\mathcal O(Y)^{\varphi=\pi^{n}}$. The algebraic curve is defined as $X^{{\rm cl}}={\rm Proj}(P)$. There is a morphism of ringed spaces $X\rightarrow X^{{\rm cl}}$ inducing an equivalence of categories between the categories of vector bundles on $X^{{\rm cl}}$ and on $X$, respectively, compare \cite{FQuelques}.

We recall the Beauville-Laszlo morphism from \cite[19]{SW} and \cite[III.3]{FS}. As in \cite[19]{SW}, $\Gr_{G}$ can be seen as the functor mapping any affinoid perfectoid $S=\Spa(R,R^+)$ over $\Spd~\Qp$ to the set of $G$-torsors over $\Spec(B_{\dR}^+(R^{\sharp}))$ together with a trivialization over $B_{\dR}(R^{\sharp})$. Here $R^{\sharp}$ is the unique untilt of $R$ corresponding to the map $S\rightarrow \Spd~\Qp$.

Let $\E\cong \E_b$ be a $G$-bundle over $X_{F^{\flat}}$, for some $b\in G(\breve{\Qp})$. The untilt $F$ corresponds to a point $\infty\in X_{F^{\flat}}$.  By \cite{FGtorseurs} and \cite[Thm.~6.5]{A}, $\E|_{X_{F^{\flat}}\setminus\{\infty\}}$ is trivial, and we tacitly always fix a trivialization.

For $S$ as above consider $x:S\rightarrow \Gr_G$. By \cite[Prop. 11.3.1]{SW}, $S$ is a closed Cartier divisor of $X_{S^{\flat}}$. Gluing $\E=\E_b$ over $X_{S^{\flat}}\setminus S$ and the trivial $G$-torsor over $\Spec(B_{\dR}^+(R))$ using the gluing datum given by $x$ \`a la Beauville-Laszlo, we obtain a $G$-bundle $\E_x=\E_{b,x}$. Mapping $x$ as above to $\E_x$ we obtain a canonical map $${\rm BL}_b:\Gr_{G,\mu}\rightarrow \Bun_G.$$

\begin{definition}
For $[b']\in B(G)$ let $\Gr_{G,\mu,b}^{[b']}$ be the subdiamond corresponding to $|{\rm BL}_b|^{-1}(\{\E_{b'}\})\subset |\Gr_{G,\mu}|$. It is called the Newton stratum (for the datum $(b,\{\mu\},[b'])$).
\end{definition}
As in \cite[3]{CS} this defines a decomposition of $\Gr_{G,\mu}$ into locally spatial locally closed subdiamonds.

\begin{remark}\label{rem32below}
\begin{enumerate}
\item Replacing $b$ by $g^{-1}b\sigma(g)$ for some $g\in G(\breve{\Qp})$ corresponds to a multiplication of the trivialization of $\E_b|_{X\setminus \{\infty\}}$. In particular, multiplication by $g^{-1}$ on $\Gr_G$ identifies Newton strata for modifications of $\E_b$ with Newton strata for modifications of $\E_{g^{-1}b\sigma(g)}$.
\item By \cite[4]{FGtorseurs}, if $\E\cong \E_{b'}$ for some $[b']\in B(G)$, then $-\kappa_G(b')$ is the first equivariant Chern class $c_1(\E)$ of $\E$. Let $b\in G(\breve F)$ and $x\in \Gr_{G,\mu,b}^{[b']}(C)$. By \cite[Lemma 3.5.5]{CS} we have
\begin{equation}\label{eqmodkappa}
-\kappa_G(b')=c_1^G(\E_{b,x})=\mu^{\sharp}+c_1^G(\E_b)=\mu^{\sharp}-\kappa_G(b).
\end{equation}
\end{enumerate}
\end{remark}
In this work we are mainly interested in Newton strata for modifications of the $G$-bundle associated with a basic element $b$. In Corollary \ref{remnonempty} below we show that in this case a Newton stratum $\Gr_{G,\mu,b}^{[b']}$ is non-empty if and only if $[b']\in \BGmub$.

\begin{definition}
Let $[b']\in \BGmub$ be the unique basic element. Then the corresponding Newton stratum $\Gr_{G,\mu,b}^{\a}=\Gr_{G,\mu,b}^{[b']}$ is called the admissible locus (for the datum $(b,\{\mu\})$). 
\end{definition}

\subsection{Adjoint quotients and inner forms}\label{secadinn}

In this subsection we collect some reduction steps and comparisons that we need later on to reduce proofs to the case of quasi-split $G$.

\subsubsection{} Let $G$ be a connected reductive group over $\Qp$. Let $G_{\ad}$ be its adjoint group. By a subscript \emph{ad} we denote images under the projection $\pr:G\rightarrow G_{\ad}.$

By \cite[Lemma III.2.10]{FS}, the induced morphism $\pr:\Bun_G\rightarrow \Bun_{G_{\ad}}$ is a surjective map of v-stacks. By \cite[Prop.~12.9]{SEt}, the corresponding map $|\pr|:|\Bun_G|\rightarrow |\Bun_{G_{\ad}}|$ is thus a quotient map.

Let $b\in G(\breve {\Qp})$ be basic and $\{\mu\}$ a conjugacy class of cocharacters of $G$. Since the value of $\kappa_G$ is constant on $\BGmub$, the projection $\BGmub\rightarrow B(G_{\ad},\mu_{\ad},b_{\ad})$ is injective.

 Furthermore, passing to the adjoint group commutes with the Beauville-Laszlo map. In particular, for each $[b']\in \BGmub$, the morphism $\pr:\Gr_G\rightarrow \Gr_{G_{\ad}}$ restricts to a map $\pr:\Gr_{G,\mu,b}^{[b']}\rightarrow \Gr_{G_{\ad},\mu_{\ad},b_{\ad}}^{[b'_{\ad}]}$.

\subsubsection{} \label{remcompgj}
 Let $b_0\in G(\breve{\Qp})$  be basic. Let $G_{b_0}$ be the inner form of $G$ obtained by twisting with $b_0$.

We obtain an isomorphism $\Bun_{G}=\Bun_{G_{b_0}}$ by mapping a $G$-bundle $\E$ on $X_S$ to the $G_{b_0}$-torsor of isomorphisms of $G$-torsors $\HHom(\E_{b_0},\E)$. On points, it induces a bijection $B(G)\cong B(G_{b_0})$ sending $[b_0]$ to $[1]$. In terms of our above construction of $\E_b$ we can make this more explicit. Let $C$ be again a complete and algebraically closed extension of ${\Qp}$ and let $b\in G(C)$. Then $\E_b^G$ is obtained by descending the trivial $G$-bundle on $Y$ via the Frobenius that is twisted by $b\sigma$. We map it to the descent of the trivial $G_{b_0}$-bundle on $Y$ via the Frobenius twisted by $(bb_0^{-1})(b_0\sigma)$ where $b_0\sigma$ is the Frobenius map on $G_{b_0}$. In other words, the above bijection maps $\E_b^G$ to $\E_{bb_0^{-1}}^{G_{b_0}}$. This map changes the Newton point of each class by $\nu_{b_0}$, which is central, and the Kottwitz point by adding $\kappa_G(b_0)$, compare \cite[3.4]{Kottwitz2}. In particular, the bijection between $B(G)$ and $B(G_{b_0})$ is compatible with the partial orders on the two sets.

Let $\{\mu\}$ be a conjugacy class of cocharacters of $G$, and assume that $b\in G(\breve{\Qp})$ is basic. Restricting the above map, we also obtain a bijection
\begin{equation*}
B(G,\mu,b)\cong B(G_{b_0},\mu,bb_0^{-1}).
\end{equation*}

Using the definition of $G_{b_0}$ as an inner form of $G$, the identity map $G_C\rightarrow G_{b_0,C}$ induces an isomorphism $\Gr_{G,C}\rightarrow \Gr_{G_{b_0},C}$. It identifies corresponding Newton strata $\Gr_{G,\mu,b}^{[b']}$ and $\Gr_{G_{b_0},\mu,bb_0^{-1}}^{[b'b_0^{-1}]}$ for all $[b']\in \BGmub$.

\subsection{Parabolic reductions}

\subsubsection{Slope vectors and non-positivity}\label{secsvnp}
Let $G'$ be a parabolic or Levi subgroup of $G$. Let $\E$ be a $G$-bundle on $X$. Then a reduction of $\E$ to $G'$ is a $G'$-bundle $\E_{G'}$ on $X$ together with an isomorphism $\E_{G'}\times^{G'}G\overset{\sim}{\longrightarrow}\E$.

Let $P$ be a parabolic subgroup of $G$, let $M$ be the Levi quotient of $P$ and let $N$ be its unipotent radical.

\begin{definition}\label{defslope}
Let ${\E}$ be a $G$-bundle on $X$ and ${\E}_P$ a reduction to $P$. Then the map
\begin{align*}
X^*(P)&\rightarrow \Z\\
\chi&\mapsto \deg(\chi_*{\E}_P)
\end{align*}
is Galois invariant and can thus be viewed as an element $v({\E}_P)\in \pi_1(M)_{\Q,\Gamma}= X_*(Z_M)_{\Q}^{\Gamma}$, called the slope vector of the reduction.

In case that $G$ is quasi-split and that $P$ is standard, we can also view $v(\E_P)$ as an element of $X_*(T)^{\Gamma}_{\mathbb Q}=X_*(A)_{\Q}$ that is central in $M$.

We call the reduction non-positive if for each $P$-dominant character $\chi\in X^*(P/Z_G)$ we have $\deg \chi_*(\E_P)\leq 0.$
\end{definition}
\begin{remark}
\begin{enumerate}
\item Assume that $G$ is quasi-split and that $P$ is standard. Then non-positivity is a weaker condition than requiring that the slope vector of the reduction is anti-dominant. For example, consider $G=GL_n$. Then a reduction of a bundle to a standard parabolic subgroup $P$ with some slope vector $v$ is non-positive if and only if all breakpoints of the polygon corresponding to $v$ lie below the straight line joining the endpoints of the polygon. 
\item In order to switch between the conventions for slope vectors of parabolic reductions of $G$-bundles and for the Newton vectors of the associated elements of $B(G)$ we introduce the following notation: For $\nu$ in $X_*(A)_{\Q}$ or $X_*(T)_{\Q}$ let $\nu^*=w_0(-\nu)$ where $w_0$ is the longest Weyl group element. 

\item Assume that $G$ is quasi-split. If $\E\cong \E_{b'}$ for some $[b']\in B(G)$, let $P$ be the parabolic subgroup associated with $v=\nu_{b'}^*$. Then $\E$ has a unique reduction to $P$ of slope vector $v$, called the canonical reduction of $\E$, see \cite{FGtorseurs}. The corresponding slope polygon $v=v({\E}_P)=\nu_{b'}^*$ is the Harder-Narasimhan polygon of ${\E}$.

Let $\E_{b'}$ be a $G$-bundle on $X$ and $\E_P$ any reduction to a parabolic subgroup of $G$. Then the comparison theorem for the Harder-Narasimhan reduction implies that the slope vector $v$ of $\E_P$ satisfies $v\leq \nu_{b'}^*$.
\end{enumerate}
\end{remark}

\begin{prop}\label{lemlamlem}
Let $G$ be a quasi-split reductive group over $\Qp$ and let $P$ be a standard parabolic subgroup. Let $M$ be its Levi quotient, and fix an embedding $M\subset P$. Let $b\in M(\breve {\Qp})$. Let $\E=\E_{b}^G$ be the associated $G$-bundle over $X$. Let $\E_P$ be any reduction of $\E$ to $P$. Then the following are equivalent:
\begin{enumerate}
\item $\E_P\times^P M\cong \E_{b}^M$
\item $\E_P\cong \E_{b}^P$
\item There is an automorphism of $\E$ identifying the subtorsor $\E_P$ with $\E_b^P\hookrightarrow \E_b^G$.
\end{enumerate}
\end{prop}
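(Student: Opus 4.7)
The implications $(2)\Rightarrow(1)$ and $(3)\Rightarrow(2)$ are essentially formal, so the content will be $(1)\Rightarrow(3)$. For $(2)\Rightarrow(1)$ I would push out $\E_P\cong\E_b^P$ along $P\twoheadrightarrow M$; for $(3)\Rightarrow(2)$ I would restrict the automorphism of $\E$ furnished by $(3)$ to the sub-$P$-bundle $\E_P$, obtaining an isomorphism $\E_P\overset{\sim}{\longrightarrow}\E_b^P$ of $P$-bundles.

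For $(1)\Rightarrow(3)$ my plan is to translate everything into the cocycle picture. Realizing $\E=\E_b^G$ as the descent of the trivial $G$-bundle on $Y$ along the twisted Frobenius $b\sigma$, a $P$-reduction of $\E$ corresponds to a coset $gP\in G(\breve{\Qp})/P(\breve{\Qp})$ satisfying $p_g:=g^{-1}b\sigma(g)\in P(\breve{\Qp})$; the underlying $P$-bundle is then isomorphic to $\E_{p_g}^P$, and the canonical reduction $\E_b^P\hookrightarrow \E_b^G$ corresponds to $g=e$. Under this dictionary, $(1)$ becomes $[\bar p_g]=[b]$ in $B(M)$ (with $\bar p_g$ the image of $p_g$ in $M=P/N$), $(2)$ becomes $[p_g]=[b]$ in $B(P)$, and $(3)$ becomes the assertion that $gP$ lies in the $J_b(\Qp)$-orbit of $eP$, where $J_b=\Aut(\E_b^G)$ acts on $G/P$ by left multiplication.

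The remaining input is the bijection $B(P)\cong B(M)$ induced by $P\twoheadrightarrow M$: surjectivity is clear from the section $M\hookrightarrow P$, and injectivity is a Lang--Steinberg-type vanishing $H^1_\sigma(\breve{\Qp},N)=0$ for the smooth connected unipotent group $N$, obtained by inducting along a central filtration with vector group quotients. Granting $(1)$, this produces $q\in P(\breve{\Qp})$ with $q^{-1}p_g\sigma(q)=b$; setting $j:=gq$, a direct calculation gives $j^{-1}b\sigma(j)=b$ and $jP=gP$, so $j\in J_b(\Qp)$ acts on $\E_b^G$ as an automorphism sending $\E_b^P$ to $\E_P$, and its inverse establishes $(3)$. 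The main obstacle I anticipate is packaging the cocycle description of $P$-reductions cleanly in the geometric setting of the Fargues--Fontaine curve and pinning down the bijection $B(P)\cong B(M)$ with an explicit reference; the rest of the argument is routine cocycle bookkeeping.
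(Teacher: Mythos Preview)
Your cocycle translation contains a genuine gap. You assert that a $P$-reduction of $\E=\E_b^G$ on $X$ corresponds to a coset $gP\in G(\breve{\Qp})/P(\breve{\Qp})$ with $g^{-1}b\sigma(g)\in P(\breve{\Qp})$. This is false: it describes only the reductions of the \emph{element} $b$ to $P$, not the reductions of the \emph{bundle} $\E_b$ to $P$ on the curve $X$. The paper explicitly flags this distinction in the remark immediately following the proposition: ``The notions of parabolic reduction of some $b\in G(\breve{\Qp})$ and of the associated $G$-bundle $\E_b$ do not correspond to each other. A reduction of $b$ to $P$ induces a natural reduction of $\E_b$ to $P$, but not conversely.'' Concretely, a $P$-reduction of $\E$ is a section of $\E/P\to X$; pulling back to $Y$ gives a map $Y\to G/P$, and there is no reason for such a map to be constant. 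Equivalently, isomorphism classes of $P$-bundles on $X$ are \emph{not} in bijection with $B(P)$: while $B(P)\cong B(M)$ is discrete, the set of $P$-bundles with fixed Levi quotient $\E_M$ is a torsor under an $H^1$ with unipotent coefficients, which is typically a positive-dimensional Banach--Colmez space. So your dictionary collapses exactly the distinction the proposition is trying to establish, and the argument is circular: showing that a $P$-reduction satisfying $(1)$ arises (up to $\Aut(\E)$) from a coset $gP$ over $\breve{\Qp}$ \emph{is} the content of $(1)\Rightarrow(3)$.

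The paper's proof is accordingly geometric rather than group-theoretic. It first treats $\GL_n$ by hand: given a filtration of a vector bundle with prescribed graded pieces, one uses the Harder--Narasimhan filtration and stability to inductively produce a splitting, hence an automorphism carrying the given filtration to the standard one. For general $G$ one refines so that $\E_P\times^P M$ is stable, compares $\E_P$ with the canonical reduction $\E_{\can}$ of $\E$ (their relative position is constant over $X$, checked via a faithful representation and the $\GL_n$ case), intersects to get a reduction to $H=P\cap{}^wP_0$, and then invokes the vanishing $H^1(X,\mathscr{U})=0$ for bundles of non-negative slopes (as in \cite[Prop.~5.16]{FGtorseurs}) to split $\E_H$ over its Levi quotient. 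The explicit description of $\Aut(\E)$ then yields the automorphism in $(3)$. None of this can be bypassed by the Lang--Steinberg argument you propose, because that argument lives entirely inside $G(\breve{\Qp})$ and never sees the geometry of $X$.
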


\begin{definition}
Let $G$ be reductive over $\Qp$ and let $P$ be a parabolic subgroup of $G$. Let $\E$ be a $G$-bundle on $X$ and let $\E_P$ be a reduction to $P$. We call the reduction $\E_P$ split if $(\E_P\times^P M)\times^M G\cong \E$.
\end{definition}
\begin{remark}
Let $\E$ be a $G$-bundle on $X$ and let $\E_P$ be a reduction to $P$. Let $b_M\in M(\breve {\Qp})$ with $\E_P\times^P M\cong \E_{b_M}^M$. Then the reduction $\E_P$ is split if and only if $\E\cong \E_{b_M}^G$. This is then also equivalent (by Proposition \ref{lemlamlem}) to conditions (2) or (3) of the proposition. Notice that we do not make any dominance assumption on the slope vector of the reduction.
\end{remark}

\begin{proof}[Proof of Proposition \ref{lemlamlem}]
Clearly, $(3)\Rightarrow (2)\Rightarrow (1)$. For the converse, we first consider the case of $\GL_n$. Using the equivalence between $\GL_n$-torsors and vector bundles, we are in the following situation: We have a vector bundle $\F$ of rank $n$ and a decomposition $\F=\F_1\oplus\dotsm\oplus \F_r$ for some $r$. Further we have a filtration $\F_0'=(0)\subseteq\F_1'\subseteq \dotsm\subseteq \F_r'=\F$ with $\F_i'/\F_{i-1}'\cong \F_i$. We have to show that there is an automorphism of $\F$ mapping $\F_i'$ to $\bigoplus_{j\leq i}\F_i$. We may assume that all $\F_i$ are stable by refining the filtration and the decomposition using a decomposition of each $\F_i$ into stable subbundles. Using descending induction on $r$ it is enough to show that there is an inclusion $\F_r\hookrightarrow \F$ inducing an isomorphism $\F\cong\F_{r-1}'\oplus \F_r$. Let $\lambda_r$ be the slope of the (stable) vector bundle $\F_r$. Let $\tilde \F$ be the filtration step of the Harder-Narasimhan filtration of $\F_{r-1}'$ of Harder-Narasimhan slopes $\geq \lambda_r$. We view it as a subbundle of $\F$ whose Harder-Narasimhan vector is the initial part of the Harder-Narasimhan vector of $\F$ consisting of all slopes $\geq \lambda_r$ except for the part corresponding to $\F_r$. Since the Harder-Narasimhan filtration as well as refinements having the same slope polygon are split for vector bundles on $X$, we have $\F\cong \tilde \F\oplus \F'$ for some $\F'$ and similarly for $\F_{r-1}'$. Replacing $\F$ by $\F'$ we may assume that all slopes of $\F$ are less than or equal to $\lambda_r$. Then we have a canonical map $\F_r\hookrightarrow \F$ as the first filtration step of the Harder-Narasimhan filtration of $\F$. The composition with the projection to $\F_r$ maps $\F_r$ to a subbundle of $\F_r$ which is also a quotient of $\F_r$. Since $\F_r$ is stable, it is either equal to $\F_r$ (in which case we have constructed the desired section of $\F\rightarrow \F_r$) or trivial. However, in the latter case, the map $\F_r\rightarrow \F$ would have image in $\F_{r-1}'$, in contradiction to our assumption that all Harder-Narasimhan slopes of $\F_{r-1}'$ are smaller than $\lambda_r$. Thus we obtain an induced isomorphism $\F\cong \F_{r-1}'\oplus \F_r$, which by induction implies the proposition for $G=\GL_n$.

Now we consider the general case. Again we can refine the parabolic reduction $\E_P$ using a refinement of the canonical reduction of $\E_P\times^P M$ to assume that $\E_P\times^P M$ is a stable $M$-bundle. Let $\HN(\E)$ denote the Harder-Narasimhan polygon of $\E$. Let $\E_{\can}$ be the canonical reduction of $\E$, a reduction to the standard parabolic subgroup $P_0$ of $G$ corresponding to $\HN(\E)$. Let $w$ be the shortest  representative of the unique class in $W_P\backslash W/W_{P_0}$ with $w(\HN(\E))= \HN(\E_P\times^P M)$. Then the parabolic reductions $\E_P$ and $\E_{\can}$ are in constant relative position $w$ over all of $X$, where the relative position is defined as in \cite[4]{S}. Indeed, this can be checked on a suitable representation $G\rightarrow \GL_n$, where it follows from (3) for $\GL_n$. Intersecting the two reductions of $\E$ we obtain a sub-$Q$-bundle $\E_Q$ where $Q=P\cap {}^wP_0\cong {}^{w^{-1}}P\cap P_0$. Let $Q_u$ be the unipotent radical of $Q$. Then from the explicit description of $w$ (and the fact that $\E_P\times^P M$ is stable), we obtain $Q/Q_u\cong M$ and $\E_Q\times^Q Q/Q_u\cong \E_P \times^P M$ is a reduction of $\E$. Viewing $\E_Q$ as a subtorsor of $\E_{\can}$, we can now apply the same argument as in \cite[Proof of Prop.~5.16]{FGtorseurs} (which also works for refinements of the canonical reduction since $H^1(X,\mathscr{U})=0$ for any vector bundle $\mathscr U$ on $X$ whose Harder-Narasimhan slopes are all $\geq 0$). We obtain that $\E_Q\cong (\E_{Q}\times^Q M)\times^M Q$ where we choose an embedding of $M$ into $Q$ that is a section of the projection map. Altogether we obtain subtorsors $\E_M\hookrightarrow \E_Q\hookrightarrow \E_P\hookrightarrow \E$. From Harder-Narasimhan theory together with the explicit description of the automorphisms of $\E$ in \cite[III.5.1]{FS}, we obtain that $\E_M$ (as subtorsor) is obtained from $\E_b^M$ by an automorphism of $\E$ which then also identifies $\E_P$ and $\E_b^P$.
\end{proof}

\subsubsection{Inner forms}\label{sec332}

Let $b\in G(\breve{\Qp})$.

We consider a parabolic subgroup $P$ of $G_{\breve{\Qp}}$ that is stable under $b\sigma$. Then we obtain a $P$-bundle over $X_C$ by descending the trivial $P$-bundle on $Y_C$ via the Frobenius given by $(b\sigma)\otimes\varphi$. It can be seen as a reduction $\E_{b,P}$ of $\E_b$ to $P$ in the generalized sense that we do not require $P$ to be defined over $F$. 

Let $b_0\in G(\breve{\Qp})$ be basic and such that $P$ as above is also stable under $b_0\sigma$. Then the parabolic subgroup $P$ can also be seen as the base change to $\breve{\Qp}$ of a parabolic subgroup $P_{b_0}$ of $G_{b_0}$ (defined over ${\Qp}$). The above reduction $\E_b^P$ corresponds to a reduction $(\E_{bb_0^{-1}}^{G_{b_0}})_{P_{b_0}}$ of $\E_{bb_0^{-1}}^{G_{b_0}}$ to $P_{b_0}\subset G_{b_0}$ in the sense of Section \ref{secsvnp}.

Two special cases are of particular interest: If $b$ is basic, we can apply these considerations to $b_0=b$. If $P$ is defined over ${\Qp}$, we can take $b_0=1$.

\begin{remark}\label{rem332}
Let $b_0,b_1$ be basic such that $P$ is stable under $b_0\sigma$ and under $b_1\sigma$. From the definition of being non-positive, we obtain that the reduction $(\E_{bb_0^{-1}}^{G_{b_0}})_{P_{b_0}}$ is non-positive if and only if the same holds for $(\E_{bb_1^{-1}}^{G_{b_1}})_{P_{b_1}}$.

Indeed, the condition depends on pull-backs of the bundles under characters of $P$ which are invariant under $b_0\sigma$ resp.~under $b_1\sigma$. These two conditions are equivalent since $b_0b_1^{-1}$ stabilizes $P$ and is hence contained in $P$.
\end{remark}

\subsubsection{Modifications}

\begin{remark}
Recall (for example from \cite[Lemma 2.4]{CFS}) the following comparison between parabolic reductions of modifications. Let $\E\cong \E_b$ be a $G$-bundle on $X$ and let $\E'$ be the modification of $\E$ associated with some trivialization of $\E|_{X\setminus\{\infty\}}$ and some $x\in \Gr_{G}(C)$. Then the isomorphism between $\E|_{X\setminus\{\infty\}}$ and $\E'|_{X\setminus\{\infty\}}$ induces for every $P$ a bijection
\begin{equation}\label{rem24}
\{\text{reductions of } \E \text{ to }P\}\rightarrow\{\text{reductions of } \E' \text{ to }P\}.
\end{equation}
On the other hand, a reduction of some $b\in G(\breve{\Qp})$ to a parabolic subgroup $P$ or to its Levi subgroup $M$ is defined (following \cite[Def.~2.5]{CFS}) as an element $b'\in [b]\cap P(\breve{\Qp})$ resp. $b'\in [b]\cap M(\breve{\Qp})$ together with some $g\in G(\breve{\Qp})$ with $b'=g^{-1}b\sigma(g)$, up to equivalence. Here, $(b_M,g)\sim (h^{-1}b_M\sigma(h),gh)$ for any $h\in M(\breve{ \Qp})$.

The notions of parabolic reduction of some $b\in G(\breve \Qp)$ and of the associated $G$-bundle $\E_b$ do not correspond to each other. A reduction of $b$ to $P$ induces a natural reduction of $\E_b$ to $P$, but not conversely. In particular, the analog of \eqref{rem24} for reductions of $b$ would be clearly false.
\end{remark}

Considering modifications of $P$-bundles, and of the associated $M$-bundles, we obtain the following easy, but very useful observation.

\begin{klem}\label{lemkey}
 Let $\E_M$ be an $M$-bundle on $X$ together with a trivialization of its restriction to $\Spec(B_{\dR}^+)$, and let $\E_P=\E_M\times^M P$ and $\E=\E_M\times^M G$. Let $x\in \Gr_G(C)$. From the Iwasawa decomposition $G(B_{\dR})=P(B_{\dR})G(B_{\dR}^+)$ we obtain a representative $x_0$ of $x$ in $P(B_{\dR})$. Let $\pr_M(x)$ be its (well-defined) image in $\Gr_M(C)$. Then by the previous remark, $\E_P$ induces a reduction $(\E_x)_P$ of $\E_x$ to $P$. It coincides with the modification of $\E_P$ corresponding to $x_0$. Furthermore, we have $$((\E_P)_{x_0})\times^P M\cong (\E_M)_{\pr_M(x)}.$$
\end{klem}

\begin{lemma}\label{lemslopekappa}
We use the notation of Lemma \ref{lemkey}.
\begin{enumerate}
\item The slope vector $v((\E_x)_P)$ coincides with the image of $c_1^M(\E_P\times^P M)- \pr_M(x)^{\sharp_M}\in \pi_1(M)_{\Gamma}$ in $\pi_1(M)_{\Gamma,\Q}$. Here, $\pr_M(x)^{\sharp_M}$ denotes the image of $\pr_M(x)\in \Gr_M$ under the projection to $\pi_1(M)_{\Gamma}$.
\item Assume that $G$ is quasi-split and that $x\in S_{\lambda}(C)$ for some $\lambda\in X_*(T)$. Let $\E_1^B$ be the trivial $B$-bundle, and $\E=\E_1^B\times^B G$. Then the slope vector of the induced reduction $(\E_x)_B$ is $-\lambda^{\diamond}$.
\item Let $P\subseteq P'\subseteq G$ be two parabolic subgroups, and let $\E_P$ be a $P$-bundle over $X$ with slope vector $v_P$. Then the slope vector $v_{P'}$ of $\E_P\times^P P'$ is the image of $v$ under the projection map $\pi_1(M)_{\Q,\Gamma}\rightarrow \pi_1(M')_{\Q,\Gamma}.$
\end{enumerate}
\end{lemma}
\begin{proof}
The first assertion follows from Lemma \ref{lemkey} together with \eqref{eqmodkappa}.

The second assertion follows from the first, using that in this case $Z_M=M=T$, and that the map to $X_*(T)_{\Q}^{\Gamma}\cong \pi_1(G)_{\Gamma,\Q}$ maps $\lambda$ to $\lambda^{\diamond}.$

The third assertion is obvious.
\end{proof}

For later use we consider the following application.
\begin{cor}\label{lemadomspl}
Assume that $G$ is quasi-split. Let $x\in \Gr_{G,\mu,1}^{[b']}$ for some $[b']$. Let $P$ be a standard parabolic subgroup of $G$, and let $M$ be its standard Levi factor. Assume that the reduction $(\E_{1,x})_P$ corresponding to the reduction $\E_1^P$ of $\E_1$ is split, $(\E_{1,x})_P\times^P M$ is semistable, and the slope vector of $(\E_{1,x})_P$ is $P$-regular and anti-dominant. Then $x\in P(\Qp)M(B_{\dR})G(B_{\dR}^+)/G(B_{\dR}^+)$.
\end{cor}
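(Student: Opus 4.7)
The plan is to lift $x$ uniquely to $\Gr_P$, decompose via the Levi, and show that the split isomorphism forces the two resulting $P$-modifications to lie in the same $P(\Qp)$-orbit on $\Gr_P(C)$.

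First, since $(\E_{1,x})_P$ extends the standard $P$-reduction $\E_1^P$ of $\E_1$, the identity $P(B_{\dR})\cap G(B_{\dR}^+)=P(B_{\dR}^+)$ produces a unique lift $y\in\Gr_P(C)=P(B_{\dR})/P(B_{\dR}^+)$ of $x$ with $(\E_{1,x})_P=(\E_1^P)_y$. Using the Levi decomposition $P=N\rtimes M$, I write $y=n\cdot m$ with $n\in N(B_{\dR})$ and $m\in M(B_{\dR})$; then $(\E_{1,x})_P\times^P M=(\E_1^M)_m$.

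Next, the semistability assumption together with the $P$-regularity and anti-dominance of the slope vector imply that the Newton point of $\E_{1,x}$ is central in $M$, so $\E_{1,x}\cong \E_{b_M}^G$ for some basic $b_M\in M(\breve{\Qp})$ with $(\E_1^M)_m\cong \E_{b_M}^M$. This is the setup in which Proposition~\ref{lemlamlem} applies; it gives $(\E_{1,x})_P\cong \E_{b_M}^M\times^M P=\E_{b_M}^P$. On the other hand, a direct Beauville--Laszlo computation yields $(\E_1^P)_m=(\E_1^M)_m\times^M P\cong \E_{b_M}^P$ as well. Hence $(\E_1^P)_y\cong (\E_1^P)_m$ as $P$-bundles on $X$.

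Finally, I use that $H^0(X,\O_X)=\Qp$ implies $\Aut(\E_1^P)=P(\Qp)$, so isomorphism classes of modifications of $\E_1^P$ correspond to left $P(\Qp)$-orbits on $\Gr_P(C)$. The previous step then produces $p\in P(\Qp)$ and $p_+\in P(B_{\dR}^+)$ with $y=p\cdot m\cdot p_+$. Passing to $\Gr_G(C)$ and absorbing $p_+\in P(B_{\dR}^+)\subset G(B_{\dR}^+)$ into the right factor gives
\[
x = p\cdot m\cdot G(B_{\dR}^+)\in P(\Qp)\,M(B_{\dR})\,G(B_{\dR}^+)/G(B_{\dR}^+),
\]
as required. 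The main technical point is the setup for Proposition~\ref{lemlamlem}, where the three hypotheses combine to put $\E_{1,x}$ in the image of $B(M)\to B(G)$; once that is in place, the remaining steps are essentially formal.
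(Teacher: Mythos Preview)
Your argument has a genuine gap in the final step. The claim that ``isomorphism classes of modifications of $\E_1^P$ correspond to left $P(\Qp)$-orbits on $\Gr_P(C)$'' is not correct. An isomorphism $(\E_1^P)_y\cong(\E_1^P)_m$ of $P$-bundles on $X$, restricted to $X\setminus\{\infty\}$, gives an automorphism of the trivial $P$-bundle there, hence an element of $P(B_e)$ where $B_e=H^0(X\setminus\{\infty\},\O_X)$; via Beauville--Laszlo this yields only $y\in P(B_e)\cdot m$ in $\Gr_P(C)$. While it is true that $\Aut(\E_1^P)=P(\Qp)$ (automorphisms over all of $X$), what governs the fibres of $\Gr_P(C)\to|\Bun_P|$ is $\Aut(\E_1^P|_{X\setminus\{\infty\}})=P(B_e)$, and $\Qp\subsetneq B_e$. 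So from the abstract isomorphism of $P$-bundles alone you cannot conclude $y\in P(\Qp)\cdot m$.

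The paper's proof proceeds differently: rather than comparing $y$ and $m$ as modifications of $\E_1^P$, it first uses the $P$-regularity and anti-dominance to pin down $(\E_{1,x})_M$ uniquely inside $(\E_{1,x})_P$ as the intersection with the canonical reduction of $\E_{1,x}$, and then applies the inverse modification $\tilde x=\pr_M(x)^{-1}$ to $(\E_{1,x})_M\times^M P\cong(\E_{1,x})_P$. The resulting $P$-bundle $\E'_P$ is then a reduction of $\E_1^G$ with $\E'_P\times^P M\cong\E_1^M$, and \cite[Cor.~2.9]{Chen} is invoked to show that this particular reduction is split. The crucial difference is that both inverse modifications are now being compared as reductions of the \emph{fixed} bundle $\E_1^G$ carrying its given trivialization on $X\setminus\{\infty\}$, rather than as abstract $P$-bundles; it is this rigidity that cuts the ambiguity down from $P(B_e)$ to $\Aut(\E_1^P)=P(\Qp)$. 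Your route via an abstract isomorphism $(\E_1^P)_y\cong(\E_1^P)_m$ loses exactly this information.
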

\begin{proof}
Let $(\E_{1,x})_M$ be an $M$-subbundle of $(\E_{1,x})_P$ with $(\E_{1,x})_P\cong (\E_{1,x})_M\times^M P$. Since $(\E_{1,x})_M\times^M P$ is semi-stable, the slope vector of $(\E_{1,x})_P$ coincides with the Harder-Narasimhan vector of $(\E_{1,x})_P\times^P M$. The slope vector  is $P$-regular and anti-dominant, thus $(\E_{1,x})_M$ is uniquely determined as the intersection of $(\E_{1,x})_P$ with the canonical reduction of $\E_{1,x}$, as in the proof of Proposition \ref{lemlamlem}. Using the Iwasawa decomposition $G(B_{\dR})=P(B_{\dR})G(B_{\dR}^+)$ we find a representative $x_0$ of $x$ in $P(B_{\dR})$, and denote by $\pr_M(x)$ its (well-defined) image in $\Gr_M(C)$. From the above modification we obtain that $(\E_{1,x})_M\cong (\E_{1,x})_P\times^P M={\E_{1,\pr_M(x)}^M}$. We now compare the modification between $\E_1^P$ and $(\E_{1,x})_P$ given by $x$ to the modification between $\E_1^M$ and $(\E_{1,x})_M$ given by $\pr_M(x)$, and the associated modification of the $P$-bundles obtained by taking a pushout to $P$: Let $\tilde x\in M(B_{\dR})$ describe a modification of $M$-bundles that is inverse to $\pr_M(x)$. It induces a modification between $(\E_{1,x})_M\times^M P$ and a $P$-torsor $\E'_P$ on $X$ with $\E_P'\times^P M\cong \E_1^M$. By \cite[Cor.~2.9]{Chen}, this implies that $\E'_P$ is a split reduction of $\E_1^G$ to $P$. In other words, the two inverse modifications corresponding to $\pr_M(x)$ (as modification of $(\E_{1,x})_M\times^M P\cong (\E_{1,x})_P$) and $x$ coincide up to an automorphism of $\E_1^P$. Thus there is an element $g\in P(\Qp)$ such that $gx\in M(B_{\dR})G(B_{\dR}^+)/G(B_{\dR}^+)$.
\end{proof}

\subsection{Opposite reductions}\label{secoppfilt}

Assume that $G$ is quasi-split. In this section we consider parabolic reductions whose slope vector is anti-dominant, and such that the associated $M$-bundle is semistable, to prove that Theorem \ref{thmmain1} implies Theorem \ref{thmoppfilt}. The idea for this proof was pointed out to us by D.~Hansen.

We consider the local charts for $\Bun_G$ introduced in \cite[V.3]{FS}. Let $[b]\in B(G)$. Let $M$ be the centralizer of its Newton point $\nu_b$ and let $P$ be the parabolic subgroup with Levi factor $M$ such that $\nu_b$ is $P$-anti-dominant. Then $\mathcal M_b$ is defined as the v-stack assigning to any perfectoid space $S$ over $\overline{\mathbb F}_q$ the groupoid of $P$-bundles $\E_P$ over $S$ such that $\E_P\times^P M$ is the $M$-bundle $\E_{b_M}^M$ associated with the reduction of $[b]$ to the centralizer of its Newton point. It is a cohomologically smooth Artin v-stack.

We consider the natural map $\pi_b:\mathcal M_b\rightarrow \Bun_G$ mapping $\E_P$ to $\E_P\times^P G$. By \cite[Thm.~V.3.7]{FS}, it is partially proper, representable in locally spatial diamonds and cohomologically smooth.

We can now prove that Theorem \ref{thmmain1} implies Theorem \ref{thmoppfilt}.

\begin{proof}[Proof of Theorem \ref{thmoppfilt}]
Let $[b]\geq [b']$ in $B(G).$ We have to show that the point of $\Bun_G$ corresponding to $[b']$ is in the image of $\pi_b$. However, since $\pi_b$ is cohomologically smooth, the induced map on topological spaces is open. The split $P$-bundle $\E_{b_M}^M\times^M P$ is mapped to $[b]$. Hence by Theorem \ref{thmmain1}, the image of $\pi_b$ contains all $[b']\leq [b]$.
\end{proof}

\section{The weakly admissible locus}\label{secwa}
\begin{definition}
\begin{enumerate}
\item A point $x\in \Gr_{G}(C)$ is weakly admissible for $b=1$ if and only if for any parabolic subgroup $P$ of $G$, the reduction $(\E_{1,x})_P$ of $\E_{1,x}$ induced by the reduction $\E_{1}^P$ of $\E_1$ is non-positive. 
\item Let $b\in G(\breve{\Qp})$ be basic. Then $x\in \Gr_{G}(C)$ is weakly admissible for $b$ if the corresponding point $x\in \Gr_{G_b}(C)$ as in Section \ref{remcompgj} is weakly admissible for $1$.
\item We denote the set of weakly admissible points (for $b$) by $\Gr_{G,b}(C)^{\wa}$.
\end{enumerate}
\end{definition}
From this we immediately obtain the following lemma that in particular allows to reduce the computation of weakly admissible points from a reductive group to the quasi-split inner form of its adjoint group.
\begin{lemma}\label{corwatoqsp}
Let $b\in G(\breve{\Qp})$ be basic.
\begin{enumerate}
\item $x\in \Gr_G(C)$ is weakly admissible for $b$ if and only if $x_{\ad}\in \Gr_{G_{\ad}}(C)$ is weakly admissible for $b_{\ad}$.
\item Let $b_0\in G(\breve{\Qp})$ be basic. Then $x\in \Gr_G(C)$ is weakly admissible for $b$ if and only if $x\in \Gr_{G_{b_0}}(C)$ is weakly admissible for $bb_0^{-1}$.
\end{enumerate}
\end{lemma}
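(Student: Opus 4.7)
The plan is to unpack the definition of weak admissibility in each case and reduce to a direct comparison of parabolic reductions, using the inner-form identifications from Section \ref{remcompgj}.

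For part (2), both conditions pass \textemdash{} by definition \textemdash{} to weak admissibility at the trivial element in an inner form of $G$: on the left this is $G_b$, on the right it is $(G_{b_0})_{bb_0^{-1}}$. The step is to show that these two inner forms of $G$ coincide canonically as $\Qp$-groups. In view of the identification $\Bun_G = \Bun_{G_{b_0}}$ from Section \ref{remcompgj} under which $\E_b^G$ is mapped to $\E_{bb_0^{-1}}^{G_{b_0}}$, both groups can be intrinsically interpreted as $\underline{\Aut}(\E_b)$. One then checks that under the resulting isomorphism $\Gr_{G_b} \cong \Gr_{(G_{b_0})_{bb_0^{-1}}}$ (obtained after base change, where all four groups agree), the two ``corresponding point'' maps from $\Gr_G(C) = \Gr_{G_{b_0}}(C)$ agree. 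Since the weak admissibility at $1$ condition depends only on the $\Qp$-group together with the Grassmannian point, the two conditions become identical.

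For part (1), the first step is to apply part (2) with $b_0 = b$ in both $G$ and $G_{\ad}$; using that formation of the adjoint group commutes with inner twisting (i.e. $(G_{\ad})_{b_{\ad}} = (G_b)_{\ad}$), this reduces the claim to the case $b = 1$. So I have to show that $x \in \Gr_G(C)$ is weakly admissible for $1$ if and only if $x_{\ad}$ is weakly admissible for $1$. Here the projection $\pr \colon G \to G_{\ad}$ induces a bijection between parabolic subgroups $P \subset G$ and parabolics $P_{\ad} \subset G_{\ad}$, and the induced map $P/Z_G \to P_{\ad}$ is an isomorphism since $Z_G = \ker(G \to G_{\ad})$. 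Thus $X^*(P/Z_G) = X^*(P_{\ad})$ canonically, and the two $P$-dominance conditions coincide.

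By functoriality of the Beauville-Laszlo modification, the reduction $(\E_{1,x})_P \times^P P_{\ad}$ coincides with $(\E_{1_{\ad},x_{\ad}})_{P_{\ad}}$ as a $P_{\ad}$-reduction on $X$. For any $\chi \in X^*(P_{\ad}) = X^*(P/Z_G)$, the pushout of $(\E_{1,x})_P$ along $\chi$ factors through $P_{\ad}$, so $\chi_*((\E_{1,x})_P)$ and $\chi_*((\E_{1_{\ad},x_{\ad}})_{P_{\ad}})$ are the same line bundle on $X$, hence have equal degree. The non-positivity conditions therefore match, proving (1). The only genuinely nonformal step is the identification of inner forms in part (2); the parabolic comparison in (1) is then routine once the characters are identified.
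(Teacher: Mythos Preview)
Your argument is correct and follows exactly the approach the paper has in mind; the paper simply records the lemma as an immediate consequence of the definition (``From this we immediately obtain\ldots'') without spelling out the details, and what you have written is precisely the unpacking of that word ``immediately''. In particular, your identification $(G_{b_0})_{bb_0^{-1}}\cong G_b$ via $\underline{\Aut}(\E_b)$ (or, equivalently, by direct computation of the twisted Frobenius) and your comparison $X^*(P/Z_G)=X^*(P_{\ad})$ are exactly the two observations that make the lemma formal.
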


\begin{lemma}\label{lemqspwa}
Assume that $G$ is quasi-split and let $b\in G(\breve{\Qp})$ be basic. Then $x\in \Gr_{G}(C)$ is weakly admissible for $b$ if and only if for every standard parabolic subgroup $P$ with standard Levi factor $M$ and every reduction $b_M=g^{-1}b\sigma(g)$ of $b$ to $M$, the reduction $(\E_{b,x})_P$ of $\E_{b,x}$ induced by the reduction $\E_{b_M}^M\times^M P$ of $\E_b$ is non-positive.
\end{lemma}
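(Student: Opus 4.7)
The plan is to translate the definition of weak admissibility for $b$, which proceeds via the inner form $G_b$, into the condition formulated over $G$ in the lemma, using the parabolic dictionary of Section~\ref{sec332} and the inner-form reformulation in Lemma~\ref{corwatoqsp}.

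First, I would unfold the definition via Lemma~\ref{corwatoqsp}(2) with $b_0 = b$: the point $x$ is weakly admissible for $b$ iff $x \in \Gr_{G_b}(C)$ is weakly admissible for $1$, meaning that for every $\Qp$-rational parabolic subgroup $P^{\ast}$ of $G_b$, the reduction of $\E_{1,x}^{G_b}$ induced by the tautological reduction $\E_1^{P^{\ast}}$ of $\E_1^{G_b}$ is non-positive. I would then enumerate all such $P^{\ast}$: by Section~\ref{sec332}, $\Qp$-parabolics of $G_b$ correspond bijectively to $b\sigma$-stable parabolics of $G_{\breve \Qp}$. Since $G$ is quasi-split, every such parabolic is $G(\breve \Qp)$-conjugate to a unique standard parabolic $P$; writing $P' = gPg^{-1}$, the $b\sigma$-stability amounts to $g^{-1}b\sigma(g) \in P(\breve \Qp)$, and after adjusting $g$ on the right within the unipotent radical of $P$ (by Lang--Steinberg) we may further arrange $b_M := g^{-1}b\sigma(g) \in M(\breve \Qp)$, where $M$ is the standard Levi of $P$.

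This $(b_M, g)$ is exactly a reduction of $b$ to $M$ in the sense of Section~\ref{secmodgb}, and conversely any such reduction produces a $b\sigma$-stable parabolic $gPg^{-1}$; the equivalence built into the definition of a reduction of $b$ to $M$ matches $G(\breve \Qp)$-conjugacy of the resulting parabolics. In particular, the set of $\Qp$-parabolics of $G_b$ is in bijection with the set of pairs $(P, [b_M])$ appearing in the lemma, so the universal quantifier on the two sides of the claimed equivalence ranges over the same indexing set.

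Finally, I would verify that the two tautological reductions match under this bijection. Under the identification $\E_1^{G_b} \cong \E_b$ of Section~\ref{remcompgj}, the tautological reduction of $\E_1^{G_b}$ to $P^{\ast}$ corresponds to the descent along $b\sigma \otimes \phi$ of the trivial $P'$-torsor on $Y$. Left translation of the trivialization on $Y$ by $g$ identifies this descent with the descent along $b_M\sigma \otimes \phi$ of the trivial $P$-torsor, which is by construction $\E_{b_M}^P = \E_{b_M}^M \times^M P$. This identification persists after modification by $x$ since $\Gr_G$ and $\Gr_{G_b}$ coincide, and by Remark~\ref{rem332} the non-positivity condition on either side is tested by the same $b\sigma$-invariant characters and so is preserved. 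Combining the three steps yields the lemma. The only step requiring care is the last Frobenius-twist identification, which is a routine direct computation; I do not anticipate a serious obstacle.
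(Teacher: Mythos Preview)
Your proposal is correct and follows essentially the same route as the paper: both arguments identify $\Qp$-parabolics of $G_b$ with $b\sigma$-stable parabolics of $G_{\breve\Qp}$, conjugate to a standard parabolic, adjust $g$ within the unipotent radical to land in the Levi, and observe that $\sigma$-conjugation by $g$ matches the two reductions. Your write-up is slightly more explicit (invoking Remark~\ref{rem332} for the non-positivity comparison and naming Lang--Steinberg), but the content is the same.
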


\begin{proof}
Let $P$ be a parabolic subgroup of $G_b$. Then $P$ can be seen as a parabolic subgroup of $G_{\breve{\Qp}}$ stable under $b\sigma$. Since $G$ is quasi-split, $P$ is conjugate (by some $g$) to a standard parabolic subgroup $P'$ of $G$, which is then stable under $g^{-1}b\sigma(g)\sigma$. Since it is defined over ${\Qp}$, this holds if and only if $g^{-1}b\sigma(g)$ is in the stabilizer of $P'$, which equals $P'$. Modifying $g$ by a suitable element of the unipotent radical of $P'$ we may assume that $g^{-1}b\sigma(g)$ is in the standard Levi factor of $P'$. In this way $\sigma$-conjugation with $g$ translates between reductions of $\E_{b,x}$ as in the lemma, and reductions of $\E_{1,x}^{G_b}$ to $P$.
\end{proof}

\begin{remark}
Using the preceeding two lemmas, one could also define the weakly admissible locus for non-basic $b$: An $x\in \Gr_{G}(C)$ is weakly admissible for $b$ if the corresponding element $x_{\ad}\in\Gr_{H}(C)$ (where $H$ is the quasi-split inner form of $G_{\ad}$) satisfies the property of Lemma \ref{lemqspwa}.
\end{remark}

\begin{remark}\label{remsb}
Let $G$ be quasi-split. Recall that a $\sigma$-conjugacy class $[b]\in B(G)$ is superbasic if it does not have a reduction to any proper Levi subgroup of $G$. Every $[b]\in B(G)$ has a reduction $b_M$ to a standard Levi subgroup $M$ such that $[b_M]_M$ is superbasic in $M$ and such that the $M$-dominant Newton point of $[b_M]_M$ is $G$-dominant. Since $M$ is standard and corresponds to a minimal Levi subgroup of $G_b$, it is uniquely defined by $[b]$.
\end{remark}

For any standard Levi subgroup $M$ we consider the averaging map
$$\av_M:X_*(A)_{\Q}\rightarrow \pi_1(M)_{\Gamma,\Q}\overset{\sim}{\longrightarrow} X_*(Z_M)_{\Q}^{\Gamma}\rightarrow X_*(A)_{\Q}$$
as well as the corresponding map $$\av_M:X_*(T)_{\Q}\rightarrow \pi_1(M)_{\Gamma,\Q}\overset{\sim}{\longrightarrow} X_*(Z_M)_{\Q}^{\Gamma}\rightarrow X_*(A)_{\Q}.$$

\begin{lemma}\label{lemcompwa}
Let $G$ be a quasi-split connected reductive group over ${\Qp}$ and let $b\in G(\breve{\Qp})$ be basic. Let $P$ be a standard parabolic subgroup of $G$ with standard Levi factor $M$ and such that $b$ has a reduction $b_M=g^{-1}b\sigma(g)$ to $M$ with $[b_M]_M$ superbasic in $M$. Then $x\in \Gr_{G}(C)$ is weakly admissible if and only for every $j\in G_b({\Qp})$ and corresponding $\lambda$ with $jx\in (gUg^{-1})(B_{\dR})\lambda(\xi)G(B_{\dR}^+)/G(B_{\dR}^+)$ we have $\av_M(-\lambda)\leq \av_G(-\lambda)$.
\end{lemma}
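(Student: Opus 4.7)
The plan is to apply Lemma \ref{lemqspwa}, then use the superbasicness of $[b_M]_M$ together with a $G_b(\Qp)$-transitivity argument to package the resulting non-positivity conditions into a single Iwasawa-type inequality, computed via Lemmas \ref{lemkey} and \ref{lemslopekappa}.

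First I would normalize by $\sigma$-conjugation. The discussion of Section \ref{remcompgj} identifies, via multiplication by $g^{-1}$ on $\Gr_G$, the Beauville-Laszlo modifications of $\E_b$ with those of $\E_{b_M}$, compatibly with $G_b(\Qp)\cong G_{b_M}(\Qp)$, and sends the twisted piece $(gUg^{-1})(B_{\dR})\lambda(\xi)G(B_{\dR}^+)/G(B_{\dR}^+)$ to the standard Iwasawa piece $U(B_{\dR})\lambda(\xi)G(B_{\dR}^+)/G(B_{\dR}^+)$. So I reduce to the case $g=1$, $b\in M$, $[b]_M$ superbasic in $M$.

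Next I would apply Lemma \ref{lemqspwa}: weak admissibility of $x$ is equivalent to non-positivity of the induced reduction of $\E_{b,x}$ to $P'$ for every standard parabolic $P'$ with standard Levi $M'$ and every reduction $b'=h^{-1}b\sigma(h)\in M'$ of $b$ to $M'$. Superbasicness forces $M\subseteq M'$ and, more importantly, forces every such $b'$ to lie in the single class $[b]_M\subset B(M)$, so every $P$-reduction of $\E_b$ has underlying $M$-bundle in the class of $\E_b^M$. From this I would deduce the transitivity of $J_b(\Qp)=G_b(\Qp)$ on the set of $P$-reductions of $\E_b$: for $g_1,g_2\in G(\breve{\Qp})$ with $g_i^{-1}b\sigma(g_i)\in M$ both representing $[b]_M$, writing $g_2^{-1}b\sigma(g_2)=m^{-1}g_1^{-1}b\sigma(g_1)\sigma(m)$ for some $m\in M(\breve{\Qp})$, the element $j=g_1 m g_2^{-1}$ lies in $J_b(\Qp)$ and conjugates one reduction to the other. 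So every $P$-reduction of $\E_{b,x}$ equals, via the automorphism action of some $j\in G_b(\Qp)$, the standard $P$-reduction (from $b\in M\subseteq P$) of $\E_{b,jx}$. Moreover, by Lemma \ref{lemslopekappa}(3) the $P'$-slope is the projection of the $P$-slope under $\pi_1(M)_{\Gamma,\Q}\to\pi_1(M')_{\Gamma,\Q}$, and this projection preserves anti-dominance (any $P'$-dominant character in $X^*(P'/Z_G)$ restricts to a $P$-dominant character in $X^*(P/Z_G)$); so it suffices to verify non-positivity of the standard $P$-reduction of $\E_{b,jx}$ as $j$ ranges over $G_b(\Qp)$.

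Then I would compute the slope of the standard $P$-reduction of $\E_{b,jx}$. Writing $jx\in U(B_{\dR})\lambda(\xi)G(B_{\dR}^+)$, decomposing $U=(U\cap M)\cdot U_N$ (with $N$ the unipotent radical of $P$) and commuting $U_N$ past $\lambda(\xi)$ via $u_N\lambda(\xi)=\lambda(\xi)(\lambda(\xi)^{-1}u_N\lambda(\xi))$ yields $\pr_M(jx)\equiv u_M\lambda(\xi)\pmod{M(B_{\dR}^+)}$, hence $\kappa_M(\pr_M(jx))$ equals the image of $\lambda$ in $\pi_1(M)_\Gamma$. By Lemmas \ref{lemkey} and \ref{lemslopekappa}(1), the slope vector of the standard $P$-reduction of $\E_{b,jx}$ is
\[
v_P = -\kappa_M(b)-\kappa_M(\pr_M(jx)) \in \pi_1(M)_{\Gamma,\Q},
\]
which under $\pi_1(M)_{\Gamma,\Q}\cong X_*(Z_M)^\Gamma_\Q\hookrightarrow X_*(A)_\Q$ becomes $-\nu_b-\av_M(\lambda)$, using that $b$ is basic in $G$ so $\kappa_M(b)$ corresponds to $\nu_b\in X_*(Z_G)^\Gamma_\Q$. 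Since $\nu_b$ is central in $G$ and therefore pairs trivially with every character in $X^*(P/Z_G)$, non-positivity of $v_P$ is equivalent to $\av_M(\lambda)-\av_G(\lambda)$ being a non-negative rational combination of positive relative coroots, i.e., $\av_M(-\lambda)\leq\av_G(-\lambda)$.

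The main obstacle is the transitivity argument in the second paragraph: it rests essentially on the superbasicness of $[b]_M$. Without it, $P$-reductions of $\E_b$ could have underlying $M$-bundle in multiple classes in $B(M)$, and a single $G_b(\Qp)$-action on $\Gr_G$ would not capture all of them; one would then need to enumerate these classes separately rather than packaging them into the single inequality $\av_M(-\lambda)\leq \av_G(-\lambda)$.
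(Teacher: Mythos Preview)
Your proposal is correct and follows essentially the same approach as the paper's proof: reduce to $g=1$ and $b=b_M$, invoke Lemma~\ref{lemqspwa}, use minimality of $M$ (from superbasicness) to reduce all $(P',M')$-conditions to the $(P,M)$-conditions indexed by $j\in G_b(\Qp)$, and compute the slope via Lemma~\ref{lemslopekappa}. Two small points of cleanup: the reduction in your first paragraph is the elementary remark on $\sigma$-conjugation after the definition of Newton strata rather than Section~\ref{remcompgj}; and the sentence ``forces every such $b'$ to lie in the single class $[b]_M\subset B(M)$'' is slightly garbled since $b'\in M'$, not $M$---what you mean (and what the paper states) is that $h$ factors as $jh'$ with $j\in G_b(\Qp)$ and $h'\in M'(\breve\Qp)$, so the $P'$-reduction is a coarsening of the $j$-translated standard $P$-reduction.
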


\begin{proof}
Replacing $b$ by $b_M$ replaces $G_b$ by $G_{b_M}=g^{-1}G_bg$, and $x$ is weakly admissible for $b$ if and only if $g^{-1}x$ is weakly admissible for $b_M$. Thus we may assume that $b=b_M$ and $g=1$.

By Lemma \ref{lemqspwa}, $x$ is weakly admissible if and only if for every standard parabolic subgroup $P'$ with standard Levi factor $M'$ and every reduction $b_{M'}$ of $b$ to $M'$, the reduction $(\E_{b,x})_{P'}$ of $\E_{b,x}$ induced by the reduction $\E_{b_{M'}}^{M'}\times^{M'} P'$ of $\E_b$ is non-positive. By Remark \ref{remsb}, $M$ is the unique minimal element among the standard Levi subgroups containing a reduction of $b$. Hence $M\subseteq M'$, and $b_{M'}=h^{-1}b\sigma(h)$ with $h=jh'\in G_{b}({\Qp})M'(\breve{\Qp})$. Thus the reduction $(\E_{b,x})_{P'}$ is a coarsening of the reduction of $\E_{b,x}$ to $P$ obtained from the reduction $j^{-1}b\sigma(j)=b$ of $b$ to $M$. If this finer reduction is non-positive, then the same holds for $(\E_{b,x})_{P'}$. Thus $x$ is weakly admissible if and only if for every reduction $j^{-1}b\sigma(j)=b$ of $b$ to $M$, the corresponding reduction $(\E_{b,x})_{P}$ of $\E_{b,x}$ is non-positive. Now we use Lemma \ref{lemslopekappa} and the fact that $\nu_{b}$ is central to compute the slope vectors of these reductions. The reductions are non-positive if and only if for each $j$, the corresponding $\lambda\in X_*(T)$ with $jx\in U(B_{\dR})\lambda(\xi)G(B_{\dR}^+)/G(B_{\dR}^+)$ satisfies $\av_M(-\lambda)\leq \av_G(-\lambda)$.
\end{proof}

The next lemma is the generalization to our context of the assertion that admissible implies weakly admissible.
\begin{lemma}\label{lem47}
Let $x\in \Gr_{G}(C)$ be such that $\E_{b,x}$ is semi-stable. Then $x$ is weakly admissible for $b$.
\end{lemma}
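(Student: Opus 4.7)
The plan is to reduce to quasi-split $G$ with $b=1$, then apply the characterization of weak admissibility via reductions of $b$ to standard Levis (Lemma \ref{lemqspwa}), and conclude using the Harder-Narasimhan comparison theorem together with centrality of the Newton vector of $\E_{b,x}$.

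First I would use Lemma \ref{corwatoqsp}: both weak admissibility for $b$ and the condition that $\E_{b,x}$ is semi-stable are preserved under the identification $\Gr_{G,C} \cong \Gr_{G_b,C}$ (which sends the pair $(G,b)$ to $(G_b,1)$, carrying $\E_{b,x}$ to $\E_{1,x}^{G_b}$) and then under the projection to $G_{\ad}$. After these reductions I may assume $G$ is quasi-split and $b=1$. By Lemma \ref{lemqspwa}, it then suffices to show that for every standard parabolic $P \subseteq G$ with standard Levi $M$ and every reduction $b_M = g^{-1}\sigma(g)$ of $1$ to $M$, the reduction $(\E_{1,x})_P$ of $\E_{1,x}$ induced by $\E_{b_M}^M \times^M P$ is non-positive.

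Second, since $\E_{1,x}$ is semi-stable, it is isomorphic to $\E_{b''}$ for some \emph{basic} class $[b''] \in B(G)$, so $\nu_{b''}$ is central. Apply the Harder-Narasimhan comparison theorem recalled in Section \ref{secsvnp} to $(\E_{1,x})_P$ viewed as a reduction of $\E_{b''}$: its slope vector $v \in X_*(A)_{\mathbb Q}$ satisfies $v \leq \nu_{b''}^*$. Centrality of $\nu_{b''}$ means $w_0(\nu_{b''}) = \nu_{b''}$, so $\nu_{b''}^* = -\nu_{b''}$, which is again a central element.

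Third, for any $P$-dominant $\chi \in X^*(P/Z_G)$, the fact that $\chi$ vanishes on $Z_G$ together with $\nu_{b''} \in X_*(Z_G)_{\mathbb Q}^{\Gamma}$ gives $\langle \chi, \nu_{b''}^* \rangle = 0$. Since $\nu_{b''}^* - v$ is a non-negative rational combination of positive coroots and $\chi$ is non-negative on those, $\langle \chi, v \rangle \leq \langle \chi, \nu_{b''}^* \rangle = 0$. That is, $\deg \chi_*((\E_{1,x})_P) \leq 0$, so the reduction is non-positive, as required. I expect the main subtlety to lie not in any single step but in the opening reduction to $(G_b,1)$ and then to the adjoint quasi-split group: one must verify that semi-stability of $\E_{b,x}$ is preserved, which follows from the observation that the bijection $B(G) \cong B(G_b)$ of Section \ref{remcompgj} takes basic classes to basic classes (both Newton shifts and Kottwitz shifts are central), and similarly for the projection to $G_{\ad}$.
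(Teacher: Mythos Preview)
Your proof is correct and follows essentially the same route as the paper's: reduce to the quasi-split case, then observe that semi-stability forces \emph{every} parabolic reduction of $\E_{b,x}$ to be non-positive, whereas weak admissibility only demands this for the particular reductions coming from reductions of $b$. The paper states this last implication in one sentence; you unpack it via the Harder--Narasimhan comparison inequality $v\le \nu_{b''}^*$ together with centrality of $\nu_{b''}$, which is a perfectly good (and slightly more explicit) way to say the same thing.

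One small slip: your reduction step claims to arrange both that $G$ is quasi-split \emph{and} that $b=1$, but in general you cannot have both simultaneously. Passing from $(G,b)$ to $(G_b,1)$ gives $b=1$ but $G_b$ need not be quasi-split; a further inner twist to reach the quasi-split form will move $b$ away from $1$ again. This does not damage your argument, since everything you do afterwards (Lemma~\ref{lemqspwa}, the HN comparison, and the pairing computation) works verbatim for any basic $b$ in a quasi-split $G$: just write $b_M=g^{-1}b\sigma(g)$ instead of $g^{-1}\sigma(g)$, and note that $\nu_{b''}$ is still central. So the fix is purely notational.
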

\begin{proof}
By Lemma \ref{corwatoqsp} and Section \ref{secadinn} we may assume that $G$ is quasi-split, replacing it by a quasi-split inner form of its adjoint group. The condition that $\E_{b,x}$ is semi-stable implies that for every parabolic subgroup $P$ of $G$, every reduction of $\E_{b,x}$ to $P$ is non-positive. Weak admissibility requires this condition only for particular such reductions.
\end{proof}

By Corollary \ref{remnonempty} below, an $x$ as in the lemma exists at least if $b$ itself is basic. Altogether we obtain
\begin{prop}
Let $G$ be a reductive group over ${\Qp}$, let $\{\mu\}$ be a conjugacy class of cocharacters of $G$ and $E$ its field of definition. Let $b\in G(\breve{\Qp})$ be basic. Consider the subfunctor $\Gr_{G,\mu,b}^{\wa}$ of $\Gr_{G,\mu}$ with $\Gr_{G,\mu,b}^{\wa}(S)$ consisting of those elements of $\Gr_{G,\mu}(S)$ such that for every geometric point of $S$, the associated element of $\Gr_{G,\mu}(C)$ lies in $\Gr_{G,\mu,b}(C)^{\wa}$. Then $\Gr_{G,\mu,b}^{\wa}$ defines a locally spatial diamond over $E$ which is open in $\Gr_{G,\mu}$.
\end{prop}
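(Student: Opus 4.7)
My plan is to handle the three substantive claims—descent to $E$, openness, and density—separately, after reducing to the case $G$ is quasi-split. By Lemma \ref{corwatoqsp} the formation of $\Gr_{G,\mu,b}^{\wa}$ is compatible with replacing $G$ by the quasi-split inner form of its adjoint group, so I may assume $G$ quasi-split from the start. The defining condition on geometric points involves only $\E_b$ and $\{\mu\}$, both of which descend to $E$, and is invariant under extension of algebraically closed complete fields, so $\Gr_{G,\mu,b}^{\wa}$ is naturally defined over $E$. Once openness in $\Gr_{G,\mu}$ is established, the locally spatial sub-diamond structure is automatic.

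For density I would exhibit the basic Newton stratum $\Gr_{G,\mu,b}^{[b'_0]}$ as an open dense subset of $\Gr_{G,\mu}$ contained in $\Gr_{G,\mu,b}^{\wa}$. By Corollary \ref{remnonempty} the unique basic $[b'_0]\in \BGmub$ exists and has central Newton point, making it the minimum of $\BGmub$ for the partial order on $B(G)$; openness of its Newton stratum is noted in the introduction, and density follows either from irreducibility of $\Gr_{G,\mu}$, or structurally via Theorem \ref{thmmain1} and continuity of ${\rm BL}_b$, as $\overline{\{[b'_0]\}}$ in the opposite-order topology on $|\Bun_G|$ contains all of $\BGmub$. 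On this stratum $\E_{b,x}\cong \E_{b'_0}$ is semistable, so Lemma \ref{lem47} puts the stratum inside the weakly admissible locus.

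For openness, which I expect to be the main obstacle, I would combine Lemma \ref{lemcompwa} with the semi-infinite cell closure relations in Remark \ref{propmvne}. After $\sigma$-conjugating so $g=1$ and $b$ itself is superbasic in $M$, the condition on $x$ is that for every $j\in G_b(\Qp)$ and $\lambda\in X_*(T)$ with $jx\in S_\lambda(C)$ one has $\av_M(-\lambda)\leq \av_G(-\lambda)$; call $\lambda$ \emph{bad} otherwise. The key observation is that badness is downward closed: $\av_G$ vanishes on the coroot lattice while $\av_M$ sends any positive coroot outside $M$ to a non-negative rational combination of simple coroots, so $\av_M(\lambda)-\av_G(\lambda)\geq \av_M(\lambda')-\av_G(\lambda')$ whenever $\lambda'\leq\lambda$. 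Combined with $\overline{S_\lambda}=\bigcup_{\lambda'\leq\lambda}S_{\lambda'}$ this shows the weakly admissible locus is stable under generization. The bound $\lambda_{\dom}\leq (-\mu)_{\dom}$ inside $\Gr_{G,\leq\mu}$ from Remark \ref{propmvne}(3) ensures that only finitely many $\lambda$ are relevant for each fixed $j$, so the bad locus for a single $j$ is closed in $\Gr_{G,\leq\mu}$. The hard part is passing from single-$j$ closedness to a union over the non-compact group $G_b(\Qp)$, which I would handle via continuity of the $G_b(\Qp)$-action together with compactness of $|\Gr_{G,\leq\mu}|$; in the minuscule case of primary interest, one may alternatively short-circuit this step by transporting the classical openness of Dat--Orlik--Rapoport \cite{DOR} via the Bialynicki--Birula isomorphism of Remark \ref{remBB}.
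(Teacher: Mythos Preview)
Your overall strategy matches the paper's proof: reduce to quasi-split, describe the complement via Lemma \ref{lemcompwa} as a union of $G_b(\Qp)$-translates of intersections $S_\lambda\cap\Gr_{G,\mu}$ for ``bad'' $\lambda$, use the closure relations of Remark \ref{propmvne} together with the downward-closedness of badness to see that each individual translate is closed, and deduce density from the basic Newton stratum via Lemma \ref{lem47}. The density argument and the downward-closedness computation are fine.

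The gap is precisely where you flag it: the passage from ``closed for each $j$'' to ``closed as a union over $j\in G_b(\Qp)$''. Continuity of the action together with compactness of $|\Gr_{G,\leq\mu}|$ is not enough---a non-compact locally profinite group acting continuously on a compact space can certainly have orbits of closed subsets whose union is not closed. What the paper actually uses is the observation that the union $\bigcup_\lambda (S_\lambda\cap\Gr_{G,\mu})$ over bad $\lambda$ is already stable under $P_b(\Qp)$ (since $P_b$ normalizes $U$ and $T$, and the bad condition on $\lambda$ depends only on the image in $\pi_1(M)$). Hence the union of translates is indexed by the compact quotient $G_b(\Qp)/P_b(\Qp)$, and a profinite (or compact) union of closed subsets of a spectral space is closed. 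Once you insert this stabilizer observation, your argument becomes complete and coincides with the paper's; without it, the openness claim is not established for non-minuscule $\mu$, and your fallback via the Bialynicki--Birula isomorphism only covers the minuscule case.
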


\begin{proof}
As usual we may assume that $G$ is quasi-split. We use the notation of Lemma \ref{lemcompwa}. We may assume that $b=b_M$. By Lemma \ref{lemcompwa}, the complement in $\Gr_{G,\mu}$ is a union of translates (by elements of $G_b(\Qp)$) of the subspaces $S_{\lambda}\cap \Gr_{G,\mu}$ where $\lambda_{\dom}\leq (-\mu)_{\dom}$ and with $\av_M(-\lambda)\nleq \av_G(-\lambda)$. The union of the above subspaces $S_{\lambda}\cap \Gr_{G,\mu}$ is stable under $P_b(\Qp)$. In particular, the above union of translates of this union of subspaces is profinite.

Furthermore, $\av_M(-\lambda)\nleq \av_G(-\lambda)$ implies the same condition for every $\lambda'\leq \lambda$. From the closure relations for the $S_{\lambda}$ as in \cite[Prop.~6.4]{Shen} we thus obtain that the complement is closed in $\Gr_{G,\mu}$, which implies that $\Gr_{G,\mu,b}^{\wa}$ is a locally spatial diamond over $E$, and open in $\Gr_{G,\mu}$.
\end{proof}
\begin{remark}
From the density of the basic Newton stratum that we prove in Corollary \ref{corclosns} and Lemma \ref{lem47} we obtain that the weakly admissible locus is also dense in $\Gr_{G,\mu}$.
\end{remark}

In the remainder of this section we compare our notion of weak admissibility to the semi-stable locus in flag varieties via the Bialynicki-Birula map, compare Remark \ref{remBB}.

Let $G$ be a reductive group over ${\Qp}$ and let $\{\mu\}$ be a conjugacy class of cocharacters of $G$. Let $E$ be its field of definition. We denote by $\F(G,\mu)$ the associated flag variety over $E$. Let $b\in G(\breve{\Qp})$. Let $C$ be a complete field extension of $\breve{\Qp}$. For $x\in \F(G,\mu)(C)$ we denote by $\mu_x\in \{\mu\}$ the associated cocharacter, which is then defined over $C$. For every representation $(V,\rho)$ of $G$ we have an associated filtered isocrystal defined as $(V_{\breve{\Qp}}, \rho(b)\sigma, {\rm Fil}^{\bullet}_{\rho\circ \mu_x}V_C)$. Then $x$ is semi-stable if for every $(V,\rho)$, the associated filtered isocrystal is semi-stable, compare \cite[Def.~8.1.5]{DOR} for details. This defines a partially proper open adic subspace $\F(G,\mu,b)^{\sst}\subseteq \F(G,\mu)$ whose $C$-valued points are the semi-stable points defined above. If $\kappa_G(b)=\mu^{\sharp}\in \pi_1(G)_{\Gamma,\Q}$, such $x$ are also called weakly admissible.

\begin{lemma}
Let $\{\mu\}$ be minuscule. Then the Bialynicki-Birula map induces an isomorphism $\Gr_{G,\mu,b}^{ \wa}\rightarrow \F(G,\mu,b)^{{\rm ss},\diamond}$.
\end{lemma}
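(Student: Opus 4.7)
The plan is to leverage that, for minuscule $\mu$, the Bialynicki-Birula map $\BB_\mu\colon \Gr_{G,\mu}\to \F(G,\mu)^{\diamond}$ is already an isomorphism of locally spatial diamonds by Remark \ref{remBB}. Both $\Gr_{G,\mu,b}^{\wa}$ and $\F(G,\mu,b)^{\sst}$ are open subspaces (the former by the preceding proposition, the latter by construction) and both are defined by a condition on all geometric points. It therefore suffices to prove, for every algebraically closed complete $C/\breve{\Qp}$, that $\BB_\mu$ identifies $\Gr_{G,\mu,b}(C)^{\wa}$ with $\F(G,\mu,b)^{\sst}(C)$.

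Using Lemma \ref{corwatoqsp} and the parallel well-known compatibility of classical (DOR) semi-stability with inner twisting and passage to the quasi-split inner form of the adjoint group (see \cite[\S 9]{DOR}), one reduces to the case that $G$ is quasi-split. Under Lemma \ref{lemqspwa}, weak admissibility of $x\in\Gr_{G,\mu}(C)$ then amounts to non-positivity of the reduction $(\E_{b,x})_P$ induced by $\E_{b_M}^M\times^M P$, for every standard parabolic $P$ with standard Levi $M$ and every reduction $b_M=g^{-1}b\sigma(g)$ of $b$ to $M$. On the classical side, every $b\sigma$-stable parabolic of $G_{\breve{\Qp}}$ is $G_b(\Qp)$-conjugate to such a standard $P$ equipped with a reduction $b_M$ of $b$, and the slope function on filtered isocrystals is invariant under this conjugation, so semi-stability of $y=\BB_\mu(x)$ reduces in the same way to non-positivity of the filtered isocrystal reduction $(V_{\breve{\Qp}},\rho(b_M)\sigma,\mathrm{Fil}^{\bullet}_{\rho\circ\mu_x})$ for every representation $\rho$ of $P$ and every such $(P,b_M)$.

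The identification of the two non-positivity conditions is the minuscule case of the standard dictionary between modifications of $\E_b$ by points of $\Gr_{G,\mu}$ and filtered isocrystals with Hodge cocharacter in $\{\mu\}$. Concretely, Lemma \ref{lemkey} applied to $\E_{b_M}^P$ realises $(\E_{b,x})_P$ as the Beauville--Laszlo modification at $\infty$ of $\E_{b_M}^P$ by a representative of $x$ in $P(B_{\dR})$, and pushing forward along any $\chi\in X^*(P/Z_G)$ gives a line bundle on $X$ whose degree equals $\langle \chi, v((\E_{b,x})_P)\rangle$ by Lemma \ref{lemslopekappa}(1). On the other hand, this degree is exactly the classical Hodge-minus-Newton integer $t_H(\chi\circ\mu_x)-t_N(\chi(b_M))$, because for minuscule $\mu$ the element $\pr_M(x)\in \Gr_M(C)$ is represented by $\mu_x(\xi)^{-1}$, so that $\kappa_M(\pr_M(x))=-\mu_x^{\sharp_M}$. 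The two non-positivity conditions therefore coincide as inequalities in $\Z$ indexed by $(P,b_M,\chi)$, and the claimed bijection on $C$-points follows.

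The main obstacle is carrying out this last comparison cleanly: one must check that the $M$-component of $x$ extracted via the Iwasawa decomposition really does transport under $\BB_\mu$ to the Hodge filtration type $\mu_x^{\sharp_M}$ of $y$, and verify that the characterisation of weak admissibility via Lemmas \ref{lemqspwa} and \ref{lemslopekappa} aligns with the DOR formulation for all parabolics of $G_{\breve{\Qp}}$ stable under $b\sigma$, not only those defined over $\Qp$. Once these matches are in place, the equivalence of the two pointwise conditions is formal, and combined with the fact that $\BB_\mu$ is already an isomorphism of locally spatial diamonds one obtains the asserted isomorphism $\Gr_{G,\mu,b}^{\wa}\to\F(G,\mu,b)^{\sst,\diamond}$.
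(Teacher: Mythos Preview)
Your approach is essentially the same as the paper's: both start from the fact that $\BB_\mu$ is an isomorphism for minuscule $\mu$, reduce to the adjoint quasi-split case via Lemma~\ref{corwatoqsp} and \cite[Prop.~9.5.3]{DOR}, and then compare the two non-positivity conditions through Lemma~\ref{lemqspwa}. The only difference is in the final step: the paper simply invokes \cite[Prop.~2.7]{CFS} to identify the slope vector of $(\E_{b,x})_P$ with the classical Hodge--Newton slope of the corresponding filtered isocrystal, whereas you try to unpack this identification directly via Lemmas~\ref{lemkey} and~\ref{lemslopekappa} and the claim that $\kappa_M(\pr_M(x))=-\mu_x^{\sharp_M}$.

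Your direct argument is on the right track, but as you yourself note, it is not complete: the assertion that ``for minuscule $\mu$ the element $\pr_M(x)$ is represented by $\mu_x(\xi)^{-1}$'' needs justification (it requires choosing the representative of $x$ in $P(B_{\dR})$ compatibly with the Bruhat--Iwasawa decomposition determined by $y=\BB_\mu(x)$, and checking that the $M$-part of this representative lands in the correct $M(B_{\dR}^+)$-coset). This is precisely the content of \cite[Prop.~2.7]{CFS}, so rather than leaving it as an ``obstacle'' you should either cite that result or fully carry out the computation. Once that is done, your proof and the paper's coincide.
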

\begin{proof}
The Bialynicki-Birula map is an isomorphism between the affine Schubert cell and the flag variety. Thus it remains to show that the claimed restriction exists and is a surjection on $C$-points for any algebraically closed complete extension of ${\Qp}$.

Let $x\in \Gr_{G,\mu}(C)$. It is weakly admissible for $b$ if and only if $x_{\ad}\in\Gr_{G_{\ad},\mu_{\ad}}(C)$ is weakly admissible for $b_{\ad}$. By \cite[Prop.~9.5.3(iv)]{DOR} an analogous statement holds for the semi-stable loci. Thus we may assume that $G$ is adjoint.

Let $G'$ be a quasi-split inner form of $G$, obtained by twisting with a basic element $b_0\in G(\breve{\Qp})$. Let $b\in G(\breve{\Qp})$ and let $\tilde b=bb_0^{-1}\in G'(\breve{\Qp})$ the corresponding element. Again by \cite[Prop. 9.5.3]{DOR}, we obtain an identification of the associated semi-stable loci $$\F(G,\mu,b)^{\sst}=\F(G',\mu,\tilde b)^{\sst}.$$ By Lemma \ref{corwatoqsp} above we have a corresponding comparison for the weakly admissible loci in the affine Schubert cell. Hence it is enough to prove the lemma for quasi-split $G$, in which case it is an immediate consequence of \cite[Prop. 2.7]{CFS} together with Lemma \ref{lemqspwa} above.
\end{proof}

\begin{ex}
For non-minuscule $\mu$, the Bialynicki-Birula map is not an isomorphism and the notions of weakly admissible resp.~semi-stable loci do not correspond to each other any more, as we now illustrate. In particular, our definition of weak admissibility coincides with the corresponding definition in the latest version of \cite{Shen}, but not with the definition in previous versions of Shen's article.

We call $x\in \Gr_{G,\mu}(C)$ classically weakly admissible if ${\rm BB}(x)$ is semi-stable in the above sense.

We consider the case $G=\GL_{2,\mathbb Q_p}$, $b={\rm diag}(p^2,p^2)$ of constant Newton slope 2, and $\mu=(4,0)$. Notice that this is even an example where $\kappa_G(b)=\mu^{\sharp}$, so that the flag variety notions of weak admissibility and of semi-stability coincide. Furthermore, $G_b=G$ in this case.

We first compute the classically weakly admissible points. The flag variety $\F(G,\mu)$ is decomposed into two Schubert cells, the class of the identity element forms the Schubert cell for $1$ whereas its open complement is the Schubert cell for $s=(12)$. The semistable locus is the complement of the $G(\mathbb Q_p)$-orbit of the closed Schubert cell.

Let $I\subset G(B_{\dR}^+)$ be the subgroup of elements whose image in $G(C)$ is in the Borel subgroup $B$ of upper triangular matrices. Thus an element $x\in \Gr_{G,\mu}(C)$ is classically weakly admissible if and only if for all $j\in G_b({\mathbb Q_p})=G({\mathbb Q_p})$ we have
\begin{align*}
jx&\in Is\mu^{-1}(\xi)G(B_{\dR}^+)/G(B_{\dR}^+)\\
&=I(\mu^{-1})_{\dom}(\xi)G(B_{\dR}^+)/G(B_{\dR}^+)\\
&=U(B_{\dR}^+)(\mu^{-1})_{\dom}(\xi)G(B_{\dR}^+)/G(B_{\dR}^+)
\end{align*}
 where $U$ is the unipotent radical of $B$. If this is the case, then in particular, $jx\in S_{(\mu^{-1})_{\dom}}(C)$ for all $j$, which by Lemma \ref{lemcompwa} implies that $x$ is weakly admissible.

Let $\lambda=(-1,-3)$. The points of $(S_{\lambda}\cap \Gr_{G,\mu})(C)\neq\emptyset$ do not satisfy the above condition, and are thus not classically weakly admissible.

Consider now the complement of the weakly admissible locus. It is a profinite union of  $G({\mathbb Q_p})$-translates of the intersections of $\Gr_{G,\mu}$ with $S_{\lambda'}$ for $\lambda'\in \{(-3,-1),(-4,0)\}$. These intersections are of dimensions 1 and 0, respectively, as can be shown by an explicit calculation. In particular, they are of dimension less than that of $S_{\lambda}\cap \Gr_{G,\mu}$, which is 3. Hence there are weakly admissible points (in $S_{\lambda}\cap \Gr_{G,\mu}(C)$) which are not classically weakly admissible.

The function field analog of this example has been studied (using different methods) by Hartl, \cite[Ex.~3.3.2, $d=4$]{HartlAnn}.
\end{ex}

\section{Classical points}\label{secclpt}

In this section we consider the particular properties of points defined over a finite extension $K$ of $\breve{\Qp}$. These generalize well-known results (such as for example the theorem of Colmez and Fontaine that weakly admissible implies admissible) to our setting. As an application we determine which Newton strata have classical points.

Since we are not interested in results for a particular such field $K$, but rather in the set of all classical points, we assume throughout that $K$ is a sufficiently large finite extension of $\breve{\Qp}$ so that all relevant elements, subgroups, etc. are defined over $K$. We choose a split maximal torus of $G$ defined over $K$, and contained in some Borel subgroup $B$ whose unipotent radical we denote by $U$. Let $\{\mu\}$ be a conjugacy class of cocharacters of $G$.

\begin{prop}\label{propclBB}
The Bialynicki-Birula map induces a bijection $$\Gr_{G,\mu}(K)\rightarrow \F(G,\mu)(K).$$
\end{prop}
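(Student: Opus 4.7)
The plan is to establish bijectivity by separately proving surjectivity and injectivity, exploiting that $\BB_\mu$ is defined on the full affine Schubert cell and is $G(K)$-equivariant. Let $C$ denote the completion of an algebraic closure of $K$ and set $G_K = \Gal(\bar K/K)$. Then $\Gr_{G,\mu}(K) = \Gr_{G,\mu}(C)^{G_K}$ and $\F(G,\mu)(K) = \F(G,\mu)(C)^{G_K}$, so the task reduces to showing that every fiber of $\BB_\mu\colon \Gr_{G,\mu}(C) \to \F(G,\mu)(C)$ over a $G_K$-fixed point contains exactly one $G_K$-fixed element.

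For surjectivity, any $\bar x \in \F(G,\mu)(K)$ determines a $K$-rational parabolic $P_{\bar x} \subseteq G_K$. Since $K$ is perfect, $H^1(K,U_{P_{\bar x}}) = 0$, so $P_{\bar x}$ admits a Levi over $K$; its unique central cocharacter in $\{\mu\}$ gives a $K$-rational cocharacter $\mu_{\bar x}\colon \mathbb{G}_{m,K} \to G_K$. By formal smoothness of the finite separable extension $K/\Qp$, the surjection $B_{\dR}^+ \to C$ admits a unique $\Qp$-algebra section lifting $K \hookrightarrow C$, and by uniqueness this section is $G_K$-equivariant. Base-changing $\mu_{\bar x}$ along it produces a cocharacter $\mu_{\bar x, B_{\dR}^+}\colon \mathbb{G}_{m,B_{\dR}^+} \to G_{B_{\dR}^+}$, and the element $x_{\bar x} := \mu_{\bar x, B_{\dR}^+}(\xi)^{-1} G(B_{\dR}^+)$ in $\Gr_{G,\mu}(C)$ is $G_K$-fixed and satisfies $\BB_\mu(x_{\bar x}) = \bar x$.

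For injectivity, I would work locally near $x_{\bar x}$ using the $\mu_{\bar x}$-weight decomposition, which is defined over $K$. Any class in $\BB_\mu^{-1}(\bar x)$ admits a representative of the form $u \cdot \mu_{\bar x, B_{\dR}^+}(\xi)^{-1}$ with $u \in U^-_{\mu_{\bar x}}(B_{\dR}^+) \cap G^1(B_{\dR}^+)$, and the fiber is $G_K$-equivariantly identified with the quotient $(U^-_{\mu_{\bar x}}(B_{\dR}^+) \cap G^1(B_{\dR}^+))/(U^-_{\mu_{\bar x}}(B_{\dR}^+) \cap H)$, where $H = G(B_{\dR}^+) \cap \mu_{\bar x}^{-1}(\xi) G(B_{\dR}^+) \mu_{\bar x}(\xi)$. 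In each root direction $\alpha$ with $\langle \alpha, \mu_{\bar x}\rangle < 0$, this quotient contributes a factor isomorphic as a $G_K$-module to $\xi B_{\dR}^+/\xi^{-\langle \alpha, \mu_{\bar x}\rangle} B_{\dR}^+$, which is an iterated extension of Tate twists $C(k)$ for $1 \leq k \leq -\langle \alpha, \mu_{\bar x}\rangle - 1$ (using that $G_K$ acts on $\xi$ via the cyclotomic character). Tate's theorem gives $C(k)^{G_K} = 0$ and $H^1(G_K, C(k)) = 0$ for all $k \neq 0$, which remains valid for $K$ finite over $\breve{\Qp}$ since the restriction of the cyclotomic character to $G_K$ still has open image in $\Z_p^\times$. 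A d\'evissage along the lower central series of $U^-_{\mu_{\bar x}}$, compatible with the $\xi$-adic filtration thanks to the $K$-rationality of $\mu_{\bar x}$, then forces $u$ to be trivial, yielding uniqueness.

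The main technical obstacle will be carrying out the $G_K$-equivariant d\'evissage cleanly in the non-abelian unipotent setting. This is tractable precisely because the $K$-rationality of $\mu_{\bar x}$ ensures that every filtration step is $G_K$-stable after base change to $B_{\dR}^+$ and that each graded piece is an honest Tate twist, so the appeal to Tate's vanishing theorem is systematic.
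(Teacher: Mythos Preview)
Your surjectivity argument coincides with the paper's: both lift a $K$-rational point of the flag variety to $\Gr_{G,\mu}$ via the canonical embedding $K\hookrightarrow B_{\dR}^+$. One slip: $K/F$ is not a finite extension, since $K$ is only finite over $\breve F$; but $K$ is still formally \'etale over $F$ (because $W(\bar{\mathbb F}_p)$ is formally \'etale over $\Z_p$ and $K/\breve F$ is finite separable), so the section you need exists and is unique and $G_K$-equivariant as claimed.

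For injectivity the two approaches genuinely diverge. The paper simply notes that $\BB_\mu$ is functorial in $G$, chooses a faithful representation $G\hookrightarrow \GL_n$, and thereby reduces to the case $G=\GL_n$, which is \cite[Prop.~10.4.4]{FF}. Your argument is instead a direct computation of the fiber over the $K$-point $\bar x$: using the $K$-rationality of $\mu_{\bar x}$ you identify it $G_K$-equivariantly with a successive extension of Tate twists $C(k)$ for $k\geq 1$, and conclude by Tate's vanishing. This is in essence what underlies the Fargues--Fontaine argument for $\GL_n$, so you are unpacking the cited black box and running it for arbitrary $G$ in one pass; your route is more self-contained, the paper's is a two-line reduction. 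One small simplification: the d\'evissage for fixed points only requires $C(k)^{G_K}=0$, not the $H^1$-vanishing, since taking invariants is left exact on the short exact sequences coming from your filtration.
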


\begin{proof}
For $G=\GL_n$, this is shown in \cite[Prop.~10.4.4]{FF}. The Bialynicki-Birula map is functorial. Choosing a faithful representation of $G$, we thus obtain that also the Bialynicki-Birula map for $G$ is injective on $K$-valued points. To show that it is surjective, let $g\in \F(G,\mu)(K)$. Let $\tilde g\in G(K)$ be an inverse image under the projection $G\rightarrow \F(G,\mu)$. Using that $K\hookrightarrow B^{+}_{\dR}(K)$, we obtain an element $x=\tilde g\mu^{-1}(\xi)G(B_{\dR}^+(K))/G(B_{\dR}^+(K))\in \Gr_{G,\mu}(K)$ with ${\rm BB}(x)=g$.
\end{proof}

We now fix a basic element $b\in G(\breve{\Qp})$.
\begin{thm}\label{propwaa}
Let $x\in \Gr_{G,\mu}(K)$. Then the following are equivalent.
\begin{enumerate}
\item $x\in \Gr_{G,\mu,b}^{\a}(K),$
\item $x\in \Gr_{G,\mu,b}^{\wa}(K),$ 
\item ${\rm BB}(x)\in \F(G,\mu,b)^{{\rm ss}}(K).$
\end{enumerate}
If $\kappa_G(b)=\mu^{\sharp}$, this is also equivalent to ${\rm BB}(x)\in \F(G,\mu,b)^{\a}(K),$ and by definition we then have $\F(G,\mu,b)^{{\rm ss}}(K)=\F(G,\mu,b)^{\wa}(K)$.
\end{thm}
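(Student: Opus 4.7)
The plan is to establish the cycle $(1) \Rightarrow (2) \Rightarrow (3) \Rightarrow (1)$, with the theorem of Colmez-Fontaine that weakly admissible filtered isocrystals are admissible as the main external input. Throughout I would use Proposition \ref{propclBB} to identify $\Gr_{G,\mu}(K)$ with $\F(G,\mu)(K)$ via the Bialynicki-Birula map, so that points on the two sides may be compared freely. The implication $(1) \Rightarrow (2)$ is immediate from Lemma \ref{lem47}: admissibility means $\E_{b,x}$ is the basic bundle $\E_{b'_0}$ for the unique basic $[b'_0] \in \BGmub$, hence semi-stable, so Lemma \ref{lem47} yields weak admissibility.

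For $(2) \Leftrightarrow (3)$, the plan is to match the two definitions at a classical point. I would first invoke Lemma \ref{corwatoqsp} and \cite[Prop.~9.5.3]{DOR} to reduce both conditions to $G$ being the quasi-split inner form of its adjoint group. At a classical point $x \in \Gr_{G,\mu}(K)$, the map $\BB$ records a cocharacter $\mu_x$ defined over $K$, and for each representation $(V,\rho)$ of $G$ the modification $\E_{b,x}^{\rho}$ is canonically isomorphic to the vector bundle on the Fargues-Fontaine curve associated with the filtered isocrystal $(V_{\breve\Qp}, \rho(b)\sigma, {\rm Fil}^{\bullet}_{\rho \circ \mu_x} V_K)$; under this dictionary sub-isocrystals correspond to reductions of $b$ to standard Levi subgroups as they appear in Lemma \ref{lemqspwa}. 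Lemma \ref{lemslopekappa} identifies the slope vector of the induced $P$-reduction of $\E_{b,x}$ with the Hodge-minus-Newton invariant $t_H - t_N$ of the associated sub-object, so the non-positivity condition of Lemma \ref{lemqspwa} reads exactly as the inequality $t_H \leq t_N$ defining semi-stability in \cite[Def.~8.1.5]{DOR}.

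The implication $(3) \Rightarrow (1)$ is the Colmez-Fontaine step. If $\BB(x) \in \F(G,\mu,b)^{\sst}(K)$, then for each representation $\rho$ of $G$ the filtered isocrystal $(V, \rho(b)\sigma, {\rm Fil}^{\bullet}_{\rho \circ \mu_x})$ is weakly admissible in the classical sense, hence admissible by Colmez-Fontaine. Via the Fargues-Fontaine dictionary (cf.\ \cite[Prop.~10.4.4]{FF}) this forces the corresponding bundle $\E_{b,x}^{\rho}$ on $X$ to be isoclinic, that is, semi-stable as a vector bundle. Choosing $\rho$ faithful one concludes that $\E_{b,x}$ is semi-stable as a $G$-bundle, so $x$ lies in the admissible locus. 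The last sentence of the theorem is then immediate: when $\kappa_G(b) = \mu^{\sharp}$ the unique basic element of $\BGmub$ is $[1]$, so $\E_{b,x}$ being semi-stable means $\E_{b,x} \cong \E_1$, matching the Rapoport-Zink admissible locus $\F(G,\mu,b)^{\a}$ of \cite{RZ,DOR}, and the equality $\F(G,\mu,b)^{\wa}(K) = \F(G,\mu,b)^{\sst}(K)$ in this situation is built into the classical definitions.

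The hard part of the plan lies in the combined bookkeeping in $(2) \Leftrightarrow (3)$, where one must identify parabolic reductions of the $G$-bundle $\E_{b,x}$ at a classical point with reductions of the associated filtered $G$-isocrystal and check that the slope vectors match under this dictionary, and in the $G$-bundle upgrade of Colmez-Fontaine in $(3) \Rightarrow (1)$, for which working through a faithful representation is the most natural route.
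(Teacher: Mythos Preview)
Your overall architecture is close to the paper's, but there is a genuine gap in your step $(3)\Rightarrow(1)$. You write that semi-stability of $\BB(x)$ implies that each filtered isocrystal $(V_{\breve\Qp},\rho(b)\sigma,{\rm Fil}^\bullet_{\rho\circ\mu_x}V_K)$ is \emph{weakly admissible in the classical sense}, and then invoke Colmez--Fontaine. This is only correct under the extra hypothesis $\kappa_G(b)=\mu^\sharp$: classical weak admissibility means semi-stable \emph{of slope zero}, and Colmez--Fontaine \cite{CF} is a statement about slope-zero objects. For general basic $b$ the associated filtered isocrystals have nonzero slope, so Colmez--Fontaine does not apply and your argument breaks. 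What is actually needed---and what the paper uses---is \cite[Prop.~10.5.6]{FF}, which says directly that for $\GL_n$ a classical point of $\Gr_{\GL_n,\mu}$ is admissible if and only if the associated filtered isocrystal is semi-stable (no slope-zero assumption). The paper then transports this to adjoint $G$ via a faithful representation and \cite[Prop.~9.5.3]{DOR}, checking that $\nu_{b'}$ is central in $G$ iff it is trivial iff $\rho(\nu_{b'})$ is trivial; your one-line ``choosing $\rho$ faithful one concludes'' hides exactly this step, which needs $G$ adjoint to go through cleanly. Also note your citation \cite[Prop.~10.4.4]{FF} is the bijectivity of $\BB$ on $K$-points, not the semi-stability comparison.

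For $(2)\Leftrightarrow(3)$ your plan is plausible but the paper takes a more concrete route that avoids setting up the filtered-isocrystal/parabolic-reduction dictionary. It uses Lemma~\ref{lemclxy}: a classical point $x$ with $\BB(x)$ in the Schubert cell for $w$ lies in $S_{-\mu^w}(K)$. Both semi-stability of $\BB(x)$ and weak admissibility of $x$ then unwind (via Lemma~\ref{lemcompwa}) to the same condition $\av_M(\mu^w)\leq\av_G(\mu^w)$ on all $G_b(\Qp)$-translates. This sidesteps the bookkeeping you flag as the hard part.
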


\begin{lemma}\label{lemclxy}
Assume that $G$ is quasi-split. Let $x\in \Gr_{G,\mu}(K)$ and let $w\in W$ be such that $y={\rm BB}(x)$ is in the Schubert cell for $w$. Then $x\in S_{-\mu^w}(K)$.

In particular, an intersection $S_{\lambda}\cap \Gr_{G,\mu}$ has classical points if and only if $-\lambda\in W.\mu$.
\end{lemma}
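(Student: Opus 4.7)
The plan is to reduce to a Bruhat decomposition calculation in $G(K)$. By Proposition \ref{propclBB} the Bialynicki--Birula map is a bijection on $K$-points, so $x$ can be written as $x = x_1 \mu^{-1}(\xi) G(B_{\dR}^+)/G(B_{\dR}^+)$ with $x_1 \in G(K) \subseteq G(B_{\dR}^+(K))$, and then $\BB(x)$ is simply the image of $x_1$ in $\F(G,\mu)(K) = G(K)/P_\mu(K)$. Here $P_\mu$ is the parabolic containing $B$ generated by $T$ and the root subgroups $U_\alpha$ with $\langle \alpha,\mu\rangle \geq 0$; this is exactly the reduction mod $\xi$ of the subgroup $G(B_{\dR}^+)\cap \mu^{-1}(\xi) G(B_{\dR}^+)\mu(\xi)$ appearing in Remark \ref{remBB}, as one sees from the formula $\mu(\xi) u_\alpha(c) \mu(\xi)^{-1} = u_\alpha(\xi^{\langle\alpha,\mu\rangle} c)$.

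By hypothesis the class of $x_1$ lies in $U(K)\dot w P_\mu(K)/P_\mu(K)$, so I write $x_1 = u \dot w p$ with $u \in U(K)$, $\dot w \in N(T)(K)$ a lift of $w$, and $p \in P_\mu(K)$. The content of the lemma then reduces to two formal identities,
$$\dot w \mu^{-1}(\xi) \dot w^{-1} = (\mu^w)^{-1}(\xi), \qquad \mu(\xi)\, P_\mu(B_{\dR}^+)\, \mu(\xi)^{-1} \subseteq G(B_{\dR}^+),$$
the second of which comes from the same root-subgroup computation as above. Combining them,
\begin{equation*}
x_1 \mu^{-1}(\xi) \;=\; u \cdot \dot w \mu^{-1}(\xi) \cdot \bigl(\mu(\xi) p \mu(\xi)^{-1}\bigr) \;=\; u \cdot (-\mu^w)(\xi) \cdot \dot w\bigl(\mu(\xi) p \mu(\xi)^{-1}\bigr),
\end{equation*}
and the last factor lies in $G(B_{\dR}^+(K))$ while $u \in U(K) \subseteq U(B_{\dR}(K))$, so indeed $x \in S_{-\mu^w}(K)$.

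For the ``in particular'' assertion: a classical point of $S_\lambda \cap \Gr_{G,\mu}$ lies, by the first part, in $S_{-\mu^w}$ where $w$ indexes the Schubert cell of its image under $\BB$, and the disjointness of the semi-infinite cells recalled in Remark \ref{propmvne} forces $\lambda = -\mu^w \in -W\cdot\mu$. Conversely, given $-\lambda = \mu^w$, any $K$-point in the Schubert cell for $w$ in $\F(G,\mu)$ (nonempty since $K$ is assumed large enough for all relevant structure) lifts through $\BB$ via Proposition \ref{propclBB} to a classical point of $\Gr_{G,\mu}$, which by the first part lies in $S_\lambda$. The only place where one has to be careful is matching conventions so that the parabolic identified in Remark \ref{remBB} is genuinely the $P_\mu$ whose cosets index the Schubert cells of $\F(G,\mu)$; once that is pinned down, the rest is bookkeeping around conjugation by $\mu(\xi)$.
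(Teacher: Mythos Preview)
Your proof is correct and follows essentially the same approach as the paper. Both arguments use that a classical point has a representative $x_1\mu^{-1}(\xi)$ with $x_1\in G(K)$ (via Proposition~\ref{propclBB}) and then read off the semi-infinite cell from the Bruhat decomposition of $x_1$; the paper is marginally slicker in that it picks a representative $\tilde y\in U(K)w$ for $y=\BB(x)$ directly, sets $\tilde x=\tilde y\mu^{-1}(\xi)\in S_{-\mu^w}(K)$, and invokes injectivity of $\BB$ on $K$-points to conclude $x=\tilde x$, thereby avoiding your explicit computation that $\mu(\xi)\,p\,\mu(\xi)^{-1}\in G(B_{\dR}^+)$ for $p\in P_\mu(K)$.
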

\begin{proof}
We have $y\in U(K)wP_{\mu}(K)/P_{\mu}(K)$. Let $\tilde y$ be a representative in $U(K)w$, and $\tilde x$ the image of $\tilde y \mu^{-1}(\xi)$ in $\Gr_{G}(K)$ (using that $K$ is a subring of $B_{\dR}^+(K)$). Then by definition $\tilde x\in S_{-\mu^{w}}(K)$, and $\tilde x$ is the unique preimage of $y$ under the Bialynicki-Birula map. Thus $x=\tilde x$ is as claimed.
\end{proof}

\begin{proof}[Proof of Theorem \ref{propwaa}]
As usual we may assume that $G$ is adjoint and quasi-split over ${\Qp}$, and we choose a standard parabolic subgroup $P$ of $G$ with standard Levi subgroup $M$ and such that $b$ has a reduction $b_M$ to $M$ that is superbasic in $M$. Replacing $b$ by $b_M$ modifies all relevant subspaces by left multiplication by a fixed element $g\in G(\breve{\Qp})$. Thus we assume that $b=b_M$ from now on.

We begin by proving the equivalence of (2) and (3). The complement of the semi-stable locus is a profinite union of Schubert cells. More precisely, $y:={\rm BB}(x)$ is semi-stable if and only if for every $j\in G_b({\Qp})$, the element $jy$ is in a Schubert cell for some $w$ with $\av_M(\mu^w)\leq \av_G(\mu^w)$. By Lemma \ref{lemclxy} this is in turn equivalent to $jx\in S_{-\mu^w}(K)$ for the same $w$. But this is by definition the same as the condition that $x$ is weakly admissible (the slope vector of the reduction of $\E_{b,jx}$ to $P$ being $-\av_M(-\mu^w)=\av_M(\mu^w)$).

Now we prove the equivalence of (1) and (3). For $G=\GL_n$, this is \cite[Prop.~10.5.6]{FF}. For general (adjoint) $G$ we choose a faithful representation $(\rho,V)$ of $G$ which is then also homogenous in the sense of \cite[5.1]{DOR}. Then an element $y\in \F(G,\mu)(K)$ is semi-stable if and only if $\rho(x)\in\F(\GL(V),\rho\circ \mu)$ is semi-stable (by \cite[Prop.~9.5.3]{DOR}). By the result for $\GL_n$, this is equivalent to $\rho(\BB_G^{-1}(y))=\BB_{\GL(V)}^{-1}(\rho(y))$ being admissible, where $\BB^{-1}$ denotes the inverse of the Bialynicki-Birula bijection of Proposition \ref{propclBB}. It remains to show that a classical point $x\in \Gr_{G,\mu}(K)$ is admissible for $b$ if and only if $\rho(x)$ is admissible for $\rho(b)$. Let $\E^G_{b,x}\cong \E^G_{b'}$ for some $[b']\in B(G)$. Then $\E^{\GL(V)}_{\rho(b),\rho(x)}\cong \E^{\GL(V)}_{\rho(b')}$. The element $x$ is admissible if and only if $\nu_{b'}$ is central. A central element of $\mathcal{N}(G)$ is determined by its image in $\pi_1(G)_{\mathbb Q}$. Since $G$ is adjoint, $\nu_{b'}$ is thus central if and only if it is trivial. This is equivalent to $\rho(\nu_{b'})=\nu_{\rho(b')}=1$.

The last assertion is well-known. To prove it, one reduces again to $\GL_n$, in which case this is due to Colmez and Fontaine \cite{CF}.
\end{proof}

\begin{cor}\label{remnonempty}
$\Gr_{G,\mu,b}^{[b']}$ is non-empty if and only if $[b']\in \BGmub$. Furthermore, the Newton stratum  $\Gr_{G,\mu,b}^{[b'_0]}$ for the basic class $[b'_0]\in \BGmub$ has a classical point.
\end{cor}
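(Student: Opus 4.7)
My plan is to establish the second assertion first, producing a classical point in the basic stratum, and then use it as an anchor for both directions of the equivalence. For the classical point: by the reductions of Sections~\ref{secadinn} and \ref{remcompgj}, I may assume $G$ is adjoint and quasi-split. Over a sufficiently large finite extension $K/\breve{\Qp}$, the semistable locus $\F(G,\mu,b)^{\sst}(K) = \F(G,\mu,b)^{\wa}(K)$ contains a $K$-rational point; this is the classical existence theorem for weakly admissible filtered isocrystals, \cite[Ch.~9]{DOR}. Proposition~\ref{propclBB} lifts any such $y$ uniquely via the Bialynicki-Birula map to $x \in \Gr_{G,\mu}(K)$, and Theorem~\ref{propwaa} identifies $x$ as admissible, so $\E_{b,x} \cong \E_{b'_0}$ and $x$ is the desired classical point in $\Gr_{G,\mu,b}^{[b'_0]}$.

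For the easy direction ``non-empty $\Rightarrow [b'] \in \BGmub$'': given $x \in \Gr_{G,\mu,b}^{[b']}(C)$, equation \eqref{eqmodkappa} yields $\kappa_G(b') = \kappa_G(b) - \mu^{\sharp}$. For the Newton inequality I would compare Harder-Narasimhan reductions: the canonical reduction of $\E_{b,x}$ has slope vector $\nu_{b'}^*$; transporting it through the modification to a parabolic reduction of $\E_b$ and computing with Lemma~\ref{lemslopekappa}, together with the bound $\lambda_{\dom} \leq (-\mu)_{\dom}$ on the lattice coordinate from Remark~\ref{propmvne}(3), yields $\nu_{b'} \leq \nu_b(\mu^{-1,\diamond})_{\dom}$ after translating slope vectors back to Newton points.

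For the hard direction ``$[b'] \in \BGmub \Rightarrow$ non-empty'' with $[b']$ non-basic, I would induct on the partial-order distance from $[b'_0]$. Choose a standard Levi subgroup $M$ of the quasi-split inner form containing a reduction $b'_M$ of $b'$ that is basic in $M$, a reduction $b_M$ of $b$ to $M$, and a cocharacter $\mu_M$ in the $W$-orbit of $\mu$ that is $M$-dominant and such that $[b'_M]_M$ is the basic class in $B(M,\mu_M,b_M)$. The basic case applied to $(M,\mu_M,b_M)$ produces a classical point of $\Gr_{M,\mu_M,b_M}^{[b'_M]_M}$, whose image under $\Gr_M \hookrightarrow \Gr_G$ lies in $\Gr_{G,\mu,b}^{[b']}$. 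The main obstacle is the combinatorial choice of $M$ and $\mu_M$: one must exhibit them so that the $B(M,\mu_M,b_M)$-set realizes $[b'_M]_M$ as its basic class, which amounts to the Kottwitz-Rapoport-Richartz criterion for non-emptiness of such sets together with a careful analysis of how the Newton and Kottwitz data transfer between $G$ and $M$.
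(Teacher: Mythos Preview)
Your treatment of the second assertion and of the easy implication is sound and aligns with the paper: both rest on Theorem~\ref{propwaa} together with the existence of $K$-rational semistable points from \cite{DOR}.

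The hard direction, however, has a real gap. Your plan is to pick a Levi $M$ together with reductions $b_M$ of $b$ and $b'_M$ of $b'$ and a cocharacter $\mu_M\in W\cdot\mu$ so that $[b'_M]_M$ is the basic class of $B(M,\mu_M,b_M)$, and then feed this into the already-established basic case for $M$. But a reduction $b_M$ of $b$ to a proper standard Levi need not exist: take $G$ quasi-split and $[b]$ superbasic (for instance $G=\GL_n$ with $\kappa_G(b)$ coprime to $n$). In that situation no proper standard Levi contains a representative of $[b]$, so your construction cannot even begin. More to the point, your method would output a \emph{classical} point in every non-empty Newton stratum, and that directly contradicts Theorem~\ref{propclpt} and the Example following it, which say that for superbasic $b$ all classical points of $\Gr_{G,\mu,b}$ lie in the basic stratum. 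The obstacle you flag at the end is therefore not a combinatorial bookkeeping issue but a genuine obstruction.

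The paper's route is different: it invokes Rapoport's argument from \cite[Cor.~A.10]{R}, whose essential inputs are Theorem~\ref{propwaa} (now available beyond the minuscule case) and the existence results for weakly admissible filtrations in \cite{DOR}. The mechanism is to pass, via the symmetry of modifications, to the period domain for the pair $((\mu^{-1})_{\dom},b')$: here $b'$ plays the role of the isocrystal and $[b]$, being basic, is the target semistable class. Non-emptiness of the admissible locus there follows from weak accessibility, with no need for $b$ to reduce to any Levi.
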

\begin{proof}
For minuscule $\mu$, this is shown by Rapoport in \cite[Cor.~A.10]{R}. The proof of his statement uses as an essential step the minuscule case of Theorem \ref{propwaa} above. Using Theorem \ref{propwaa} in general, Rapoport's argument then carries over almost literally to the non-minuscule case (the necessary results in \cite{DOR} making no assumption on $\mu$).
\end{proof}

\begin{thm}\label{propclpt}
Let $G$ be a connected reductive group over ${\Qp}$ and $\{\mu\}$ a conjugacy class of cocharacters of $G$. By $\mu$ we denote as usual the dominant representative in $X_*(T)$. Let $b\in G(\breve{\Qp})$ be basic.
\begin{enumerate}
\item Let $[b']\in \BGmub$. Then $\Gr_{G,\mu,b}^{[b']}$ has a classical point if and only if there is a $w\in W$ such that $\nu_{b'}^{\sharp_M}=\nu_b^{\sharp_M}-\mu^{w,\sharp_M}$ where $M$ is the centralizer of the Newton point of $[b']$.
\item Let $[b']\in \BGmub$. Then $\Gr_{G,\leq\mu,b}^{[b']}$ has a classical point if and only if $G_b$ has a parabolic subgroup $P$ such that $[b'b^{-1}]_{G_b}$ has a representative in $P$ whose image in the Levi quotient $M$ is basic in $M$.
\item If $G$ is quasi-split, the condition in (2) is equivalent to the condition that $b$ has a reduction to the centralizer of $\nu_{b'}$.
\end{enumerate}
\end{thm}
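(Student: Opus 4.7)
My approach is to translate classical points of $\Gr_{G,\mu,b}^{[b']}$ via the Bialynicki--Birula isomorphism (Proposition \ref{propclBB}) into elements of semi-infinite cells $S_{-\mu^w}$ using Lemma \ref{lemclxy}, and then exploit Lemma \ref{lemslopekappa} to read off Chern classes of induced parabolic reductions. As in the earlier proofs of this paper, I would first reduce to the case that $G$ is quasi-split using Section \ref{secadinn} and Section \ref{remcompgj} to pass respectively to the adjoint group and to the quasi-split inner form of $G_b$; classical points, Schubert cells, and Newton strata all behave compatibly under these reductions.

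For the forward direction of (1), a classical $x \in \Gr_{G,\mu,b}^{[b']}(K)$ maps under $\BB$ into a unique Schubert cell $\F(G,\mu)^w$, and by Lemma \ref{lemclxy} we have $x \in S_{-\mu^w}(K)$. With $M = Z_G(\nu_{b'})$ and $P$ the associated standard parabolic, the Iwasawa decomposition represents $x$ by an element of $P(B_{\dR})$ whose $M$-projection is $(-\mu^w)(\xi) \in \Gr_M(K)$. Combining Lemma \ref{lemkey} with Lemma \ref{lemslopekappa}(1) then shows that the induced $P$-reduction of $\E_{b,x} \cong \E_{b'}$ has $M$-bundle of Chern class $\nu_b^{\sharp_M} - \mu^{w,\sharp_M}$ in $\pi_1(M)_{\Gamma,\Q}$; because $\nu_{b'}$ is $M$-central, the canonical $M$-bundle $\E_{b'_M}^M$ is semi-stable with Chern class $\nu_{b'}^{\sharp_M}$, and a Harder--Narasimhan comparison in the spirit of the proof of Proposition \ref{lemlamlem} identifies the two $M$-bundles, yielding the desired equation. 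For the converse, the hypothesis places $[b'_M]_M$ as the unique basic class of the Kottwitz set $B(M,\mu^w,b_M)$ for $M$-reductions $b_M, b'_M$ of $b, b'$ (both are basic in $M$, since $\nu_b$ and $\nu_{b'}$ are $M$-central); Corollary \ref{remnonempty} applied inside $M$ produces a classical point $x_M \in \Gr_{M,\mu^w,b_M}^{[b'_M]}(K)$, whose image under $M \hookrightarrow G$ lands in $\Gr_{G,\mu,b}^{[b']}(K)$ via Lemma \ref{lemkey}, using that $\mu^w$ and $\mu$ are $G$-conjugate.

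Parts (2) and (3) are deduced from (1) by varying $\mu' \leq \mu$. A classical point of $\Gr_{G,\leq\mu,b}^{[b']}$ lies in $\Gr_{G,\mu',b}^{[b']}$ for some $\mu' \leq \mu$, and applying (1) to each such $\mu'$, the existence of the required $w \in W$ translates---after passing to $G_b$ via Section \ref{remcompgj}, where the central Newton shift by $-\nu_b$ identifies $M$ with the centralizer of $\nu_{b'b^{-1}}$ in $G_b$---into the existence of a parabolic $P \subseteq G_b$ such that $[b'b^{-1}]_{G_b}$ has a representative in $P$ whose image in the Levi quotient is basic; this gives (2). For (3), when $G$ is quasi-split, the centralizer $M$ is a standard Levi over $\Qp$, and the parabolic reduction of $[b'b^{-1}]_{G_b}$ from (2) corresponds bijectively under the inner-form identification of Section \ref{remcompgj} to a reduction of $b$ to $M$ inside $G$. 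The main obstacle is the forward direction of (1): matching the Chern class of the $P$-reduction induced from $x \in S_{-\mu^w}$ with that of the canonical $M$-reduction, which requires Harder--Narasimhan theory together with the central position of $\nu_{b'}$ inside $M$ to rule out non-canonical reductions with different $M$-Chern class.
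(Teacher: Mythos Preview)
Your forward direction of (1) has a genuine gap. You fix $M = Z_G(\nu_{b'})$ and its standard parabolic $P$, then speak of ``the induced $P$-reduction of $\E_{b,x}$'' coming from a reduction of $\E_b$. But this presupposes a reduction $b_M$ of $b$ to $M$, which is not given a priori; indeed, for quasi-split $G$ this is exactly the content of (3) and can fail (e.g.\ $G=\GL_3$, $b$ superbasic of slope $1/3$, $M=\GL_2\times\GL_1$). Even granting such a $b_M$, the reduction of $\E_{b,x}\cong\E_{b'}$ you obtain from $\E_{b_M}^P$ via \eqref{rem24} is just \emph{some} $P$-reduction of $\E_{b'}$; there is no reason its associated $M$-bundle should be the canonical semistable one $\E^M_{b'_M}$. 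Your appeal to ``a Harder--Narasimhan comparison in the spirit of Proposition \ref{lemlamlem}'' does not help: that proposition assumes one already knows $\E_P\times^P M\cong\E_b^M$, which is precisely what you are trying to establish. The paper's proof avoids this by a different route: it uses Theorem \ref{propwaa} to see that a non-basic classical point is not weakly admissible, then chooses the parabolic reduction with \emph{maximal} slope vector $v$, proves $v$ is dominant, and uses maximality to force the associated $M$-bundle to be weakly admissible, hence (classical!) admissible, hence semistable with $v^*=\nu_{b'}$. Only then does one know that the Levi is $Z_G(\nu_{b'})$ and that $b$ reduces to it.

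Your treatment of (2) is also incomplete. One direction is fine, but for the converse you must show: given that $b$ reduces to $M=Z_G(\nu_{b'})$ and $[b']\in\BGmub$, there exist $\mu'\leq\mu$ and $w\in W$ with $\kappa_M(b')=\kappa_M(b_M)-(\mu')^{w,\sharp_M}$. This is not formal; the paper invokes He's theorem \cite{HeNonempty} on non-emptiness of affine Deligne--Lusztig varieties to produce an element of $G(B_{\dR}^+)\mu(\xi)G(B_{\dR}^+)$ that is $\sigma$-conjugate to $b'$, and then retracts it to $M$ via the Iwasawa decomposition to extract the required $\mu_0$. Without this input your sketch does not close.
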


\begin{remark}
In particular, a non-empty Newton stratum $\Gr_{G,\mu}^{[b']}$ for minuscule $\mu$ and $[b']\in \BGmub$ has a classical point if and only if the condition in (2) is satisfied.

Furthermore, if $[b']\in B(G)$, then $\Gr_{G}$ has a classical point $x$ with $\E_{b,x}\cong \E_{b'}$ if and only if the condition in (2) is satisfied.
\end{remark}
\begin{ex}
Assume that $G$ is quasi-split and that $[b]\in B(G)$ is superbasic, i.e.~no $\sigma$-conjugate of $b$ is contained in a proper Levi subgroup of $G$. Then all classical points of $\Gr_{G,\mu,b}$ are in the basic Newton stratum.
\end{ex}

\begin{proof}
Since we always assume that $\kappa_G(b')=\kappa_G(b)-\mu^{\sharp}$, each of the above conditions is equivalent to the respective condition for the images in $G_{\ad}$. Thus we may assume that $G$ is adjoint. Furthermore, replacing $G$ by an inner twist by some $b_0$ we may assume also in (1) and (2) that $G$ is quasi-split.

If $[b']$ is basic, the Newton stratum has a classical point by Corollary \ref{remnonempty}. Also, all of the other conditions are satisfied. Thus the theorem holds in this case. From now on we assume that $[b']$ is not basic.

In the context of (1) let $x\in \Gr_{G,\mu,b}^{[b']}(K)$. Since $x$ is not admissible, it is also not weakly admissible. Thus there is a standard parabolic subgroup $P$ together with a reduction $b_M$ of $b$ to its Levi factor $M$ such that the reduction of $\E_{b,x}$ to $P$ induced by $\E_{b_M}^M\times^M P$ has a slope vector $v$ with $v\nleq \av_G(v)$. We choose $P$ and the reduction of $b$ in such a way that $v$ becomes maximal (i.e., there is no strictly bigger $v'$ for any such reduction of $\E_{b,x}$).

{\it Claim.} $v$ is dominant.

Assume that $v$ is not dominant. Since it is central in $M$, there is a simple root $\alpha$ in the unipotent radical of $P$ such that $\langle \alpha,v\rangle<0$. Let $P'\supsetneq P$ be the parabolic subgroup corresponding to the simple roots in $M$ together with $\alpha$. Then $v$ is anti-dominant in $M'$. Consider the reduction of $\E_{b,x}$ to $P'$ for the same reduction $b_M$ of $b$. Its slope vector is central in $M'$ with the same image in $\pi_1(M')_{\Q,\Gamma}$. In particular, it is strictly bigger than $v$, contradiction.

By maximality of $v$, the modification $(\E_{b,x})_P\times^P M$ of $\E^M_{b_M}$ is weakly admissible (compare the generalities in the proof of \cite[Lemma 6.4]{CFS}). Since $x$ is classical, also $\pr_M(x)$ is classical. Hence this modification of $M$-bundles is admissible. Thus $(\E_{b,x})_P$ is a $P$-bundle with dominant slope vector $v$ and such that the associated $M$-bundle is semi-stable. As in the proof of \cite[Prop. 5.16]{FGtorseurs}, this implies that $(\E_{b,x})_P\times^P M$ is a reduction of $(\E_{b,x})_P$ to $M$, and hence a reduction of $\E_{b,x}$ to $M$. We obtain $\nu_{b'}=v^*$.

Let $g\in G(\breve{\Qp})$ with $b_M=g^{-1}b\sigma(g)$. Replacing $x$ by $g^{-1}x$ (which is still classical) we may assume that $b=b_M$ and that the reduction of $b=b_M$ to $b_M$ is the identity. Let $\lambda\in X_*(T)$ be such that $x\in S_{\lambda}(C)$. By Lemma \ref{lemclxy}, $-\lambda=\mu^w$ for some $w\in W$. We also have a reduction of $b'$ to the centralizer of its Newton point, which we denote again by $b'$. Then by Lemma \ref{lemslopekappa}, we obtain $\nu_{b'}^{\sharp_M}=\kappa_M(b')=\kappa_M(b_M)+w_0(\lambda)^{\sharp_M}=\nu_b^{\sharp_M}-\mu^{w_0w,\sharp_M}$.

For the converse assume that there is a $w\in W$ such that $\nu_{b'}^{\sharp_M}=\nu_b^{\sharp_M}-\mu^{w,\sharp_M}$ where $M$ is the centralizer of the Newton point of $[b']$. We first show that $b$ has a reduction to $M$. Again we may assume that $b'\in M(\breve{\Qp})$. Let $\tilde b_M\in M(\breve{\Qp})$ be basic in $M$ with $\kappa_M(\tilde b_M)=\kappa_M(b')+\mu^{w,\sharp_M}$. Then the $M$-dominant Newton point $\nu_{\tilde b_M}$ is the unique element that is central in $M$ and with $\nu_{\tilde b_M}^{\sharp_M}=\nu_{b'}^{\sharp_M}+\mu^{w,\sharp_M}$. Thus by our assumption, $\tilde b_M$ is the desired reduction of $b$ to $M$. By Corollary \ref{remnonempty}, the basic locus for modifications of $\E_{\tilde b_M}^M$ in $\Gr_{M,{\mu^w_{M-\dom}}}\subseteq \Gr_{G,\mu}$ has classical points, and the corresponding basic class has Kottwitz point $\kappa_M(\tilde b_M)-\mu^{w,\sharp_M}=\kappa_M(b')$. Hence it is equal to $[b']_M$, which finishes the proof of (1).

(3) holds since parabolic subgroups of $G$ containing $b$ (and thus stable under $b\sigma$) are in bijection with parabolic subgroups of $G_b$. Thus for (2), it remains to show that for quasi-split groups, the condition in (3) is equivalent to the existence of a classical point in $\Gr_{G,\leq\mu,b}^{[b']}$.

If $\Gr_{G,\leq\mu,b}^{[b']}$ has a classical point, then the same holds for $\Gr_{G,\mu',b}^{[b']}$ for some $\mu'\leq \mu$. From the proof of (1) we see that this implies that $[b]$ has a reduction to $M$, which proves one direction. Conversely, assume that $b_M$ is contained in the centralizer of $\nu_{b'}$, and that $[b']\in \BGmub$. Using (1) it is enough to show that there is a $\mu'\leq\mu$ and a $w\in W$ with $\kappa_M(b')=\kappa_M(b_M)-(\mu')^{w,\sharp_M}$. For the proof of this assertion we pass to the inner form $G_b$ of $G$, and may thus assume that $b=1$. Then $G$ is in general no longer quasi-split, but by our asumption on $b$, the centralizer $M$ of the Newton point $\nu_{b'}$ and the associated parabolic subgroup $P$ are still defined over $F$. We may assume that $b'\in M(\breve F)$. We now have $[b']\in B(G,-\mu)$. By \cite[Thm.~A]{HeNonempty}, this implies that there is a $g\in G(B_{\dR})$ with $g^{-1}b'\sigma(g)\in G(B_{\dR}^+)\mu^{-1}(\xi)G(B_{\dR}^+)$.
We use the Iwasawa decomposition to write $g=mnk$ with $m\in M(B_{\dR})$, $n\in N(B_{\dR})$ where $N$ is the unipotent radical of $P$, and $k\in G(B_{\dR}^+)$. We may assume that $k=1$. Then $g^{-1}b'\sigma(g)\in P(B_{\dR})\cap \Gr_{G,\mu}(C)$. Let $\tilde b=m^{-1}b'\sigma(m)$. We have $g^{-1}b'\sigma(g)=n^{-1}m^{-1}b'\sigma(mn)=n^{-1}[\tilde b\sigma(n)\tilde b^{-1}]\tilde b$ with $n^{-1}[\tilde b\sigma(n)\tilde b^{-1}]\in N(B_{\dR})$. Thus $\tilde b=m^{-1}b'\sigma(m)$ is the retraction of $g^{-1}b'\sigma(g)$ to $M$, and as such lies in $\Gr_{G,\leq\mu}$. Let $\mu_0\in X_*(T)$ be such that $\tilde b\in \Gr_{M,\mu_0}(C)$. Since $\tilde b\in [b']_M$, this implies $[b']_M\in B(M,-\mu_0)$. Furthermore, $\mu_{0,\dom}\leq \mu$, and $(-\mu_0)^{\sharp_M}=\kappa_M(\tilde b)=\kappa_M(b')$. Thus $\mu_0$ has all required properties of $(\mu')^{w}$.
\end{proof}
\begin{remark}
From the first part of the proof we obtain the interesting fact that for classical points, the Newton point of $\E_{b,x}$ is $v^*$ where $v$ is a maximal element in the set of slope vectors of reductions of $\E_{b,x}$ to parabolic subgroups  $P$ induced by $\E_{b_M}^M\times^M P$ for some reduction $b_M$ of $b$ to $M$. Such maximal slope vectors $v$ are studied more systematically in subsequent joint work with K. H. Nguyen \cite{hnstrata}.
\end{remark}

\section{The topology of $\Bun_G$}
\subsection{The generic Newton stratum in a semi-infinite cell}

\begin{lemma}
Assume that $G$ is quasi-split and let $\lambda\in X_*(T)$. Then the set  $$B(G,\geq w_0(\lambda)):=\{[b']\in B(G)\mid \kappa_G(b')=\lambda^{\sharp},w_0( \lambda)^{\diamond}\leq\nu_{b'}\}$$  has a unique minimal element that we denote $[b(\lambda)]$. We have $[b(\lambda)]\leq[\lambda(\xi)]$.
\end{lemma}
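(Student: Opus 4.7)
My plan is to first show $[\lambda(\xi)]\in B(G,\geq w_0(\lambda))$, which gives both non-emptiness of the set and the upper bound, and then establish existence and uniqueness of the minimum via a meet-semilattice argument on the finite subinterval of Newton points below $[\lambda(\xi)]$.

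For the upper bound, the $G$-bundle $\E_{1,\lambda(\xi)}$ arises from the torus case via the push-forward $B(T)\to B(G)$, and one computes directly that its class in $B(G)$ has Kottwitz invariant $\lambda^{\sharp}$ and Newton point $(\lambda^{\diamond})_{\dom}$. Since $G$ is quasi-split, $B$ is defined over $\Qp$, so $\Gamma$ preserves the positive roots; consequently $w_0\in W$ is $\Gamma$-fixed under conjugation and commutes with the Galois action on $X_*(T)_{\Q}$. Thus $w_0(\lambda)^{\diamond}=w_0(\lambda^{\diamond})\in W\lambda^{\diamond}$, and the elementary fact that the dominant element of a Weyl orbit is maximal in the absolute root partial order yields $(\lambda^{\diamond})_{\dom}-w_0(\lambda^{\diamond})\in \Q_{\geq 0}\Phi^{+,\vee}$. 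Since this difference is $\Gamma$-invariant, averaging over $\Gamma$ rewrites it as a non-negative combination of positive relative coroots, so $(\lambda^{\diamond})_{\dom}\geq w_0(\lambda)^{\diamond}$ in $X_*(A)_{\Q}$ as required.

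For the minimum, the Kottwitz classification shows all elements of $B(G,\geq w_0(\lambda))$ share the Kottwitz invariant $\lambda^{\sharp}$, so the Newton map identifies this set bijectively with
\[
S=\{\nu\in X_*(A)_{\Q,\dom}:\nu\geq w_0(\lambda)^{\diamond}\text{ and the image of }\nu\text{ in }\pi_1(G)_{\Gamma,\Q}\text{ equals that of }\lambda^{\sharp}\}.
\]
The subinterval $S_0=\{\nu\in S:\nu\leq(\lambda^{\diamond})_{\dom}\}$ is finite, as Newton points with fixed image in $\pi_1(G)_{\Gamma,\Q}$ lying in the bounded region $w_0(\lambda)^{\diamond}\leq\nu\leq(\lambda^{\diamond})_{\dom}$ form a discrete set. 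The key technical step is to show that $S$ is a meet-semilattice in the relative partial order: given $\nu_1,\nu_2\in S$, there exists a largest dominant $\nu_3\leq\nu_1,\nu_2$ with $\nu_3\geq w_0(\lambda)^{\diamond}$. In the split $\GL_n$ case this is the classical concave-envelope construction --- view $\nu_1,\nu_2$ as concave Newton polygons, take the pointwise minimum of their partial-sum functions, and pass to the largest concave minorant. The dominant envelope of $w_0(\lambda)^{\diamond}$ (the smallest dominant element $\geq w_0(\lambda)^{\diamond}$) is automatically $\leq \nu_1,\nu_2$ and concave, hence $\leq$ the minorant, so $\nu_3\geq w_0(\lambda)^{\diamond}$. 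For general quasi-split $G$, I would reduce to the split case via a faithful representation $G\hookrightarrow\GL_n$ compatible with the Newton and Galois structures, or appeal to the general lattice structure on dominant Newton polygons with fixed image in $\pi_1(G)_{\Gamma,\Q}$.

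Given this, the finite meet-semilattice $S_0$ has a unique minimum $\nu_{b(\lambda)}$; by Kottwitz classification there is a unique $[b(\lambda)]\in B(G)$ with Kottwitz invariant $\lambda^{\sharp}$ and Newton point $\nu_{b(\lambda)}$. It remains the minimum of the whole $S$: for any $\nu\in S$, the meet $\nu\wedge(\lambda^{\diamond})_{\dom}$ lies in $S_0$ and is $\geq \nu_{b(\lambda)}$, so $\nu_{b(\lambda)}\leq \nu$. The main obstacle will be justifying the meet-semilattice property in the non-split quasi-split setting, where the concave-envelope construction must be shown to respect both the Galois action on $X_*(T)_{\Q}$ and the passage from absolute to relative roots.
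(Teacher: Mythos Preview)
Your argument that $[\lambda(\xi)]\in B(G,\geq w_0(\lambda))$ is correct and in fact more explicit than the paper's, which simply asserts this membership in one line. The computation $\nu_{\lambda(\xi)}=(\lambda^{\diamond})_{\dom}$, the commutation of $w_0$ with Galois averaging, and the passage from absolute to relative coroots are all fine.

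For the existence of the minimum, the paper takes a much shorter route: it invokes Chai's theory of edge subsets \cite[Thm.~6.5]{Chai} directly. In Chai's language, $w_0(\lambda)$ determines an ``edge subset'', and the associated reduced edge subset has a join which is precisely $\nu_{b(\lambda)}$; this is a black-box citation. Your approach is essentially an attempt to reprove (a piece of) Chai's theorem by hand---the concave-envelope construction you describe for $\GL_n$ is exactly the content of that result in the split linear case. So the two approaches converge on the same reference; yours just unpacks it partially before deferring to it.

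Two points deserve tightening. First, your set $S$ is \emph{not} in bijection with $B(G,\geq w_0(\lambda))$ via the Newton map: not every dominant $\nu$ with the correct image in $\pi_1(G)_{\Gamma,\Q}$ is the Newton point of some $[b']$ with $\kappa_G(b')=\lambda^{\sharp}$; an integrality condition at the break points is missing. What you actually need is that the minimum of $S$ (the concave hull of $w_0(\lambda)^{\diamond}$) is a genuine Newton point---which it is, but this is the substantive content and is precisely what Chai proves. Second, the proposed reduction to $\GL_n$ via a faithful representation is unlikely to work cleanly: while Newton points push forward functorially, pulling the \emph{minimum} back from $\GL_n$ to $G$ requires control you do not establish. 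Your fallback ``appeal to the general lattice structure'' is the right move, and is exactly the paper's citation of Chai.

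In short: your proof is correct once you cite Chai for the general case, but at that point the meet-semilattice detour through $S_0$ becomes redundant---the concave hull of $w_0(\lambda)^{\diamond}$ \emph{is} the minimum directly, no pairwise meets required.
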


{\setlength{\unitlength}{16 pt}
\begin{figure}[h]
\begin{center}
\begin{picture}(8,2.7)(0,0.5)
\multiput(0,0)(1,0){8}{$\cdot$}
\multiput(0,1)(1,0){8}{$\cdot$}
\multiput(0,2)(1,0){8}{$\cdot$}
\multiput(0,3)(1,0){8}{$\cdot$}

\put(0.1,0.17){\line(1,0){1}}
\put(1.1,0.17){\line(1,1){1}}
\put(2.1,1.17){\line(1,0){1}}
\put(3.1,1.17){\line(1,1){1}}
\put(4.1,2.17){\line(1,0){2}}
\put(6.1,2.17){\line(1,1){1}}

\put(0.1,0.17){\line(2,1){4}}
\put(4.1,2.17){\line(3,1){3}}
\put(5.2,3.3){$\nu_{b(\lambda)}$}
\put(3.3,0.6){$w_0(\lambda)$}
\end{picture}
\end{center}
\caption{An example of $w_0(\lambda)$ and $\nu_{b(\lambda)}$ for $\GL_7$} \label{fig61}
\end{figure}
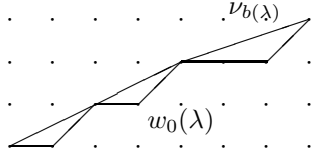
}

See Figure \ref{fig61} for an example of some $\nu_b$ and $w_0(\lambda)$ for the minimal $\lambda$ with $\nu_b=\nu_{b(\lambda)}$. Notice that in this example, both $[b]$ is minimal in $B(G,\geq w_0(\lambda))$ and $\lambda$ is minimal with $\nu_b=\nu_{b(\lambda)}$.

Note that the inequality in the definition of $B(G,\geq w_0(\lambda))$ compares the Newton point to a not necessarily dominant element, whereas the inequality $[b(\lambda)]\leq[\lambda(\xi)]$ includes a comparison between the associated dominant Newton points $\nu_{b(\lambda)}$ and $(\lambda^{\diamond})_{\dom}$.

For the proof we use the notion of edge subsets of \cite{Chai}. An edge subset is a subset $E$ of $\bigcup_i \mathbb Re_i^*\subset X_*(T)_{\mathbb R}$ where the $e_i^*$ are the fundamental coweights. Let $x\in C:=X_*(T)_{\mathbb R,\dom}$ and consider its image $(x_i)_i$ in $\bigoplus_i \mathbb R_{\geq 0}e_i^*$. Then we define $C_{\geq E}$ to be the set of $x$ as above with $x_i\geq \lambda$ whenever $\lambda e_i^*\in E$. We call $E$ reduced if $C_{\geq E}\subsetneq C_{\geq E'}$ for every $E'\subsetneq E$. Then by \cite[Thm.~6.5]{Chai}, $E$ has a join, that is a unique minimal element of $C_{\geq E}$.
\begin{proof}
Associated with $w_0(\lambda)$ we consider the edge subset consisting of all projections of $w_0(\lambda)$ to some $\mathbb R e_i^*$. Let $E$ be an associated reduced edge subset. Let $[b(\lambda)]\in C_{\geq E}$ be the join of $E$ as in \cite[Thm.~6.5]{Chai}. By \cite[Thm. 7.3.2]{Chai}, $[b(\lambda)]\in X_*(T)_{\mathbb R,\dom}$ corresponds to an element of $B(G)$, which proves the first assertion. The second assertion follows from $[\lambda(\xi)]\in B(G,\geq w_0(\lambda))$.
\end{proof}

\begin{lemma}\label{lemcharbl}
Let $G$ be quasi-split, let $[b]\in B(G)$, and let $M$ be the centralizer of its Newton point. Let $\lambda\in X_*(T)$. Let $b$ be a representative of $[b]$ in $M$ whose $M$-dominant Newton point is $G$-dominant.
\begin{enumerate}
\item Then $[b(\lambda)]=[b]$ if and only if $\kappa_M(b)=w_0(\lambda)^{\sharp_M}$ and $w_0(\lambda^{\diamond})\leq \nu_b$.
\item Let $M'\subseteq M$ such that $[b]_M$ has a reduction $b_{M'}$ to $M'$ that is superbasic in $M'$. If $\lambda$ is minimal with $[b(\lambda)]=[b]$ then $\kappa_{M'}(b_{M'})=w_0(\lambda)^{\sharp_{M'}}$.
\end{enumerate}
\end{lemma}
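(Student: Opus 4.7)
My plan is to deduce both parts from Chai's explicit construction of $[b(\lambda)]$ recalled (implicitly) in the preceding lemma. Concretely, if $L\subset G$ denotes the centralizer of $\nu_{b(\lambda)}$, then $\nu_{b(\lambda)}$ is the image of $w_0(\lambda)^{\sharp_L}\in\pi_1(L)_{\Gamma,\Q}$ under the isomorphism $\pi_1(L)_{\Gamma,\Q}\cong X_*(Z_L)_\Q^\Gamma\subset X_*(A)_\Q$, and the integral Kottwitz value satisfies $\kappa_L(b(\lambda))=w_0(\lambda)^{\sharp_L}$ since $b(\lambda)$ is basic in $L$.

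For part (1), the forward implication is that membership of $[b]$ in $B(G,\geq w_0(\lambda))$ gives $w_0(\lambda^\diamond)\leq \nu_b$ directly, while the above characterization applied with $L=M$ yields $\kappa_M(b)=w_0(\lambda)^{\sharp_M}$. Conversely, assuming these two hypotheses, Weyl-invariance of the projection $\pi_1(M)_\Gamma\to\pi_1(G)_\Gamma$ turns $\kappa_M(b)=w_0(\lambda)^{\sharp_M}$ into $\kappa_G(b)=\lambda^\sharp$, placing $[b]$ in $B(G,\geq w_0(\lambda))$; centrality of $\nu_b$ in $M$ then forces $\nu_b$ to be the image of $w_0(\lambda)^{\sharp_M}$ in $X_*(Z_M)_\Q^\Gamma$, so $\nu_b=\nu_{b(\lambda)}$ and hence $[b]=[b(\lambda)]$.

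For part (2), I would argue by contradiction: suppose $\delta:=\kappa_{M'}(b_{M'})-w_0(\lambda)^{\sharp_{M'}}\neq 0$ in $\pi_1(M')_\Gamma$. By part (1), its image in $\pi_1(M)_\Gamma$ vanishes, so $\delta$ lies in the kernel of $\pi_1(M')_\Gamma\to\pi_1(M)_\Gamma$, which is generated by the images of coroots of $M$ that are not coroots of $M'$. I would then build an element $\gamma\in X_*(T)$ lifting a suitable sign-choice of $\delta$ which is a non-zero non-negative combination of positive coroots of $G$, has trivial image in $\pi_1(M)$ and image $\pm\delta$ in $\pi_1(M')_\Gamma$. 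Setting $\lambda':=\lambda-\gamma$, one checks $w_0(\lambda')^{\sharp_M}=w_0(\lambda)^{\sharp_M}=\kappa_M(b)$ and $w_0(\lambda')^\diamond\leq w_0(\lambda)^\diamond\leq \nu_b$, so (1) gives $[b(\lambda')]=[b]$ with $\lambda'<\lambda$, contradicting minimality.

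The main obstacle is the construction of $\gamma$ in part (2): one must show that a non-zero element of $\ker(\pi_1(M')_\Gamma\to\pi_1(M)_\Gamma)$ lifts to a non-negative combination of positive coroots of $G$ (not merely to an arbitrary element of the coroot lattice of $M$). This is where the superbasic-ness of $b_{M'}$ in $M'$ is crucial — it places $\kappa_{M'}(b_{M'})$ in a position where no proper sub-Levi of $M'$ intervenes, which (together with the structure of the $w_0$-images of coroots of $M$ outside $M'$) ensures that the would-be $\gamma$ can indeed be chosen positively. I expect the main technical work to lie in making this positivity precise.
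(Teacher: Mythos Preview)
Your treatment of part (1) is fine and matches the paper: both reduce to Chai's characterization of the join.

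For part (2), however, there is a genuine error in your chain of inequalities. You write that setting $\lambda'=\lambda-\gamma$ with $\gamma$ a non-zero non-negative combination of positive coroots yields $w_0(\lambda')^{\diamond}\leq w_0(\lambda)^{\diamond}$. This is false: since $w_0$ sends positive coroots to negative coroots, $w_0(\gamma)\leq 0$, hence $w_0(\lambda')=w_0(\lambda)-w_0(\gamma)\geq w_0(\lambda)$, and the inequality goes the \emph{wrong} way. So you cannot conclude $w_0(\lambda')^{\diamond}\leq\nu_b$ from $w_0(\lambda)^{\diamond}\leq\nu_b$; the new inequality has to be checked directly, and this is exactly where the hypothesis $\kappa_{M'}(b_{M'})\neq w_0(\lambda)^{\sharp_{M'}}$ must enter.

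The paper's argument is both simpler and avoids this trap. Rather than lifting all of $\delta$, it passes to a single maximal standard Levi $M_0\supseteq M'$ on which the images differ. The key point you partly anticipated---but misattributed to superbasicness---is that since $b_{M'}$ is \emph{basic} in $M'$, one has $\kappa_{M'}(b_{M'})=\nu_b^{\sharp_{M'}}$ rationally; combined with $w_0(\lambda^{\diamond})\leq\nu_b$ and $\kappa_M(b)=w_0(\lambda)^{\sharp_M}$ from part (1), this forces $\nu_b-w_0(\lambda^{\diamond})$ to be a non-negative combination of positive coroots of $M$, so the difference in $\pi_1(M_0)_{\Gamma}$ is automatically $\geq 0$, hence strictly positive. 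One then subtracts a single simple coroot from $w_0(\lambda)$ and checks the two conditions of part (1) for the resulting $\lambda'$ directly against $\nu_b$ (not against $w_0(\lambda)^{\diamond}$). Superbasicness plays no role beyond guaranteeing basicness; the positivity you were looking for comes from the already-established inequality $w_0(\lambda^{\diamond})\leq\nu_b$, not from any finer structural property of $M'$.
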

\begin{proof}
The first assertion is a reformulation of the last assertion of \cite[Thm.~6.5]{Chai}.

For (2) assume that $[b(\lambda)]=[b]$ and that $\lambda$ is minimal with this property. Assume the assertion does not hold. Then there is a maximal standard parabolic subgroup $P_0$ containing $M'$, with Levi factor $M_0$ such that $\kappa_{M_0}(b_{M'})>w_0(\lambda)^{\sharp_{M_0}}$. Let $\alpha$ be a simple absolute root that is not in $M'$. Then $\kappa_{M_0}(b_{M'})\geq (w_0(\lambda)+\alpha^{\vee})^{\sharp_{M_0}}$. This implies $(w_0( \lambda)+\alpha^{\vee})^{\diamond}\leq \nu_{b_{M'}}$, contradicting the minimality of $\lambda$.
\end{proof}
\begin{remark}\label{remhv21}
The elements $\lambda$ that are minimal with $[b(\lambda)]=[b]$ for a given $[b]$ are precisely the $w_0$-conjugates of any representative in $X_*(T)$ of the element $\lambda_G(b)\in X^*(\hat T)_{\Gamma}$ constructed in \cite[Lemma/Def.~2.1]{HamacherViehmann}. The element $\lambda_G(b)\in X^*(\hat T)_{\Gamma}$ is characterized uniquely by the property that $\lambda_G(b)^{\sharp_G} = \kappa_G(b)$ and that for every relative fundamental coweight $\omega_{\hat G,F}^\vee$ of $\hat G$, one has
  \begin{equation*} 
   \langle \lambda_G(b) - \nu_G(b), \omega_{\hat G,F}^\vee \rangle \in (-1,0].
  \end{equation*}
\end{remark}

\begin{lemma}\label{lemest}
Assume that $G$ is quasi-split. Let $[b']\in B(G)$ with $\E_{1,x}\cong \E_{b'}$ for some $x\in S_{\lambda}(C)$. Then $[b']\in B(G,\geq w_0(\lambda))$.
\end{lemma}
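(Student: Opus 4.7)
The plan is to produce a reduction of $\E_{b'} \cong \E_{1,x}$ to the Borel subgroup $B$ whose slope vector can be read off directly from $\lambda$, and then apply the standard Harder--Narasimhan comparison.

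First I would exhibit the relevant reduction. The trivial $B$-bundle $\E_1^B$ satisfies $\E_1^B \times^B G = \E_1$, and via the Iwasawa decomposition $G(B_{\dR}) = U(B_{\dR})\lambda(\xi)G(B_{\dR}^+)$ the point $x \in S_\lambda(C)$ admits a canonical representative $x_0 \in B(B_{\dR})$. By the mechanism of Lemma \ref{lemkey}, modifying $\E_1^B$ by $x_0$ produces a reduction $(\E_{1,x})_B$ of $\E_{1,x} \cong \E_{b'}$ to $B$. Lemma \ref{lemslopekappa}(2) computes its slope vector to be $-\lambda^\diamond$.

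To establish $\kappa_G(b') = \lambda^\sharp$, I would push forward this reduction along $B \to G$. By Lemma \ref{lemslopekappa}(3), the image of $-\lambda^\diamond$ under $X_*(T)_{\Q,\Gamma} \to \pi_1(G)_{\Q,\Gamma}$ equals the slope vector of $\E_{b'}$ itself, which is $c_1^G(\E_{b'}) = -\kappa_G(b')$ by \eqref{eqmodkappa}. Since $\lambda^\diamond$ and $\lambda$ have the same image in $\pi_1(G)_\Gamma$, this forces $\kappa_G(b') = \lambda^\sharp$.

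To establish $w_0(\lambda)^\diamond \leq \nu_{b'}$, I would invoke the comparison theorem for the Harder--Narasimhan reduction recalled in Section \ref{secsvnp}: the slope vector of any parabolic reduction of $\E_{b'}$ is dominated by $\nu_{b'}^* = -w_0(\nu_{b'})$. Applied to our $B$-reduction, this reads
\[
-\lambda^\diamond \leq -w_0(\nu_{b'})\quad \text{in } X_*(T)_\Q.
\]
Since $\nu \leq \nu'$ is equivalent to $w_0(\nu') \leq w_0(\nu)$ (the longest element $w_0$ reverses the coroot ordering) and since $w_0$ commutes with the $\Gamma$-action on $X_*(T)$ for quasi-split $G$ (so $w_0(\lambda^\diamond) = w_0(\lambda)^\diamond$), applying $-w_0$ to both sides yields $w_0(\lambda)^\diamond \leq \nu_{b'}$, as required.

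Combining the two inequalities shows $[b'] \in B(G,\geq w_0(\lambda))$. The only delicate point is the manipulation of the ordering under $w_0$ together with the Galois average, but for quasi-split $G$ the longest element is Galois-invariant, so $w_0$ and the $\diamond$-operation commute and no obstruction arises.
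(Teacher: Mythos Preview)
Your argument is essentially the paper's own: take the reduction of $\E_{1,x}$ to $B$ induced by the trivial $B$-bundle $\E_1^B$, compute its slope vector as $-\lambda^\diamond$ via Lemma~\ref{lemslopekappa}(2), and apply the Harder--Narasimhan comparison. The paper's proof is the same three lines; you have simply spelled out the $w_0$-manipulation that the paper leaves implicit.

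One minor imprecision: your derivation of $\kappa_G(b')=\lambda^\sharp$ through Lemma~\ref{lemslopekappa}(3) only yields the equality in $\pi_1(G)_{\Gamma,\Q}$, because slope vectors live in the rational coinvariants by definition. Knowing $\lambda^{\diamond,\sharp}=\lambda^\sharp$ in $\pi_1(G)_\Gamma$ does not lift the rational equality to an integral one. The paper's ``by construction'' instead points to \eqref{eqmodkappa} directly: for $x\in\Gr_{G,\mu}$ one has $\kappa_G(b')=-\mu^\sharp$ in $\pi_1(G)_\Gamma$, and $\lambda^\sharp=-\mu^\sharp$ there since $S_\lambda$ and $\Gr_{G,\mu}$ lie in the same component of $\Gr_G$. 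This is a trivial fix and does not affect the substance of your argument.
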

\begin{proof}
By construction we have $\kappa_G(b')=\kappa_G(1)+\lambda^{\sharp_G}$. Let $\E_{1}^{B}$ be the trivial $B$-bundle on $X$. It induces a reduction of $\E_{1,x}$ to $B$. By Lemma \ref{lemslopekappa}, the slope vector of this reduction agrees with $-\lambda^{\diamond}$. By the comparison theorem for the Harder-Narasimhan reduction we obtain that $\nu_{b'}\geq w_0(\lambda^{\diamond})$.
\end{proof}

Let $\lambda\in X_*(T)$. For each $\eta\in X_*(T)_{\dom}$ let $S_{\lambda,\eta}$ be as in Section \ref{secthebg}. Let $\mu$ be such that $S_{\lambda,\eta}\subseteq \Gr_{G,\leq \mu}$ and for $[b']\in B(G)$ let $S_{\lambda,\eta}^{[b']}=S_{\lambda,\eta}\cap \Gr_{G,\leq \mu,1}^{[b']}$, a locally spatial diamond. Since we use these only for modifications of the trivial bundle, we do not include $b=1$ in the notation.
\begin{thm}\label{propkey}
Let $G$ be quasi-split. We consider modifications of the trivial $G$-bundle $\E_1$. Let $[b']\in B(G)$ and let $\lambda\in X_*(T)$ be minimal with the property that $[b(\lambda)]=[b']$. Then for all sufficiently regular $\eta\in X_*(T)_{\dom}$, the Newton stratum $S_{\lambda,\eta}^{[b(\lambda)]}$ is open and dense in $S_{\lambda,\eta}$. In particular, $S_{\lambda}^{[b({\lambda})]}=\bigcup_{\eta}S_{\lambda,\eta}^{[b(\lambda)]}$ is also open and dense in $S_{\lambda}$.
\end{thm}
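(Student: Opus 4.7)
I would prove this in three pieces.

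First, openness follows from the semi-continuity of the Newton stratification on $\Gr_G$ (Kedlaya--Liu, Scholze--Weinstein, cf.\ the introduction) combined with Lemma \ref{lemest}: every $C$-point of $S_\lambda$ gives $[b']\in B(G,\geq w_0(\lambda))$, and $[b(\lambda)]$ is the unique minimum of this finite set, so $S_{\lambda,\eta}^{[b(\lambda)]}$ is the complement in $S_{\lambda,\eta}$ of finitely many closed Newton strata and hence open.

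Second, density reduces to non-emptiness. By Proposition \ref{propslambdaeta}, $S_{\lambda,\eta}=\lambda(\xi)\eta(\xi)^{-1}\cdot S_{\eta,\eta}$ is an open subspace of the affine Schubert cell $\Gr_{G,(-\eta)_\dom}$, and affine Schubert cells are irreducible locally spatial diamonds, so $S_{\lambda,\eta}$ is itself irreducible and any non-empty open subspace is automatically dense.

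Third, for non-emptiness one constructs a classical point. Apply Lemma \ref{lemcharbl}(2) to choose a standard Levi $M'\subseteq G$ admitting a superbasic reduction $b_{M'}$ of $[b(\lambda)]$ with $\kappa_{M'}(b_{M'})=w_0(\lambda)^{\sharp_{M'}}$. By Corollary \ref{remnonempty} applied inside $M'$, the basic Newton stratum of a suitable $\Gr_{M',\mu'}$ (for $\mu'$ chosen so that $[b_{M'}]_{M'}$ is the basic class of $B(M',\mu')$) contains a classical point $x_{M'}\in\Gr_{M'}(K)$, $K$ a finite extension of $\breve\Qp$, with $\E^{M'}_{1,x_{M'}}\cong\E^{M'}_{b_{M'}}$. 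Its image $x$ under the natural map $\Gr_{M'}\hookrightarrow\Gr_G$ satisfies
$$\E^G_{1,x}\cong \E^{M'}_{b_{M'}}\times^{M'}G\cong\E^G_{b(\lambda)},$$
and since $M'$ is contained in the centralizer $M$ of $\nu_{b(\lambda)}$, the Newton point $\nu_{b_{M'}}$ is already central in $M$ and in particular $G$-dominant, so extending the structure group from $M'$ to $G$ forces no additional specialization. After acting by a suitable element of $M'(\Qp)$ together with the unipotent radical of the standard parabolic with Levi $M'$ (which preserves the Newton stratum while shifting semi-infinite cells), the point lies in $S_\lambda$; for $\eta$ sufficiently regular it then lies in $S_{\lambda,\eta}(K)$.

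\textbf{Main obstacle.} The subtle step is matching semi-infinite cells. By Lemma \ref{lemclxy}, $x_{M'}$ lies a priori in $S_{-\mu'^w}$ for some $w\in W_{M'}$, and since $W_{M'}$ does not contain $w_0\in W$ in general, the image of $x_{M'}$ in $\Gr_G$ sits in a cell which need not equal $S_\lambda$. The minimality of $\lambda$ enters precisely here through the Kottwitz identification $\kappa_{M'}(b_{M'})=w_0(\lambda)^{\sharp_{M'}}$ from Lemma \ref{lemcharbl}(2); this is exactly the numerical input ensuring that $S_\lambda$ is reachable by the symmetries acting on the $[b(\lambda)]$-stratum. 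Without the minimality hypothesis, the superbasic reduction would sit in the wrong Kottwitz class inside $M'$ and the construction would miss the prescribed semi-infinite cell.
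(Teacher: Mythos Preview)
Your openness step is correct and coincides with the paper's argument.

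Your Step 2 relies on irreducibility of $\Gr_{G,(-\eta)_{\dom}}$, which the paper does not establish; the paper bypasses this by bounding the dimension of the complement instead.

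The real problem is Step 3. Your constructed point $x_{M'}\in\Gr_{M',\mu'}(K)$ lands, by Lemma \ref{lemclxy} applied to $M'$, in some $S^G_{\lambda'}$ with $(\lambda')^{\sharp_{M'}}=-(\mu')^{\sharp_{M'}}=\kappa_{M'}(b_{M'})=w_0(\lambda)^{\sharp_{M'}}$. Acting by the parabolic $P'(\Qp)$ with Levi $M'$ preserves the image of the cell parameter in $\pi_1(M')$: the unipotent radical lies in $U$ and fixes the cell, while $m\in M'(\Qp)\subset M'(B_{\dR}^+)$ changes the Iwasawa parameter only within a fixed class in $\pi_1(M')$. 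So you can reach $S_\lambda$ only if $\lambda^{\sharp_{M'}}=w_0(\lambda)^{\sharp_{M'}}$, which is false in general. Concretely, take $G=\GL_4$, $[b']$ with $\nu_{b'}=(\tfrac12,\tfrac12,0,0)$, so $M=M'=\GL_2\times\GL_2$; the minimal $\lambda$ is $(0,0,1,0)$, giving $\lambda^{\sharp_M}=(0,1)$ but $w_0(\lambda)^{\sharp_M}=(1,0)$. Your construction produces classical points in $S_{(1,0,0,0)}$ or $S_{(0,1,0,0)}$, not $S_{(0,0,1,0)}$. (In this example $S_\lambda^{[b(\lambda)]}$ does contain classical points, but they are not in the image of $\Gr_{M'}$ and are not reached by your symmetries.) The ``numerical input'' from Lemma \ref{lemcharbl}(2) pins down the class in $\pi_1(M')$ on the wrong side of $w_0$.

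The paper's proof is structurally different and avoids constructing any point. It shows $\dim(S_{\lambda,\eta}\setminus S_{\lambda,\eta}^{[b(\lambda)]})<\langle 2\rho,\eta\rangle$ in two stages. First it reduces to $[b(\lambda)]$ superbasic by projecting along $\pr_M:S^G_{\lambda,\eta}\to S^M_{\lambda,\eta}$ for the Levi $M$ carrying a superbasic reduction: any $x$ whose $M$-projection lands in the basic $M$-stratum already has $\E_{1,x}\cong\E_{b(\lambda)}$. Then, for superbasic $[b(\lambda)]$, it inducts on the semisimple rank: one picks a maximal parabolic $P$ minimizing $\av_M(\lambda)$, shows there is no class strictly between $[b(\lambda)]$ and $\av_M(\lambda)$ (Claim 2), and uses Corollary \ref{lemadomspl} to trap the remaining bad locus inside $P(\Qp)M(B_{\dR})G(B_{\dR}^+)/G(B_{\dR}^+)\cap S_{\lambda,\eta}$, which has smaller dimension for $\eta$ regular. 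The minimality of $\lambda$ enters through Lemma \ref{lemcharbl} to set up the superbasic reduction and through Claim 2, not as a device for hitting a prescribed semi-infinite cell.
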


\begin{proof}
The last assertion follows from \eqref{eqSle}.

By Lemma \ref{lemest}, $[b(\lambda)]$ is less than or equal to all isomorphism classes of $G$-bundles corresponding to points of $S_{\lambda}$. 
Thus the semi-continuity theorem of Scholze-Weinstein \cite[Cor.~22.5.1]{SW} implies that the Newton stratum for $[b(\lambda)]$ in $S_{\lambda,\eta}$ is open for every $\eta$.

By Proposition \ref{propslambdaeta} it is enough to show that the complement of $S^{[b(\lambda)]}_{\lambda,\eta}$ in $S_{\lambda,\eta}$ has dimension strictly smaller than $\langle 2\rho,\eta\rangle.$ Let $P$ be a standard parabolic subgroup of $G$ with standard Levi factor $M$, and such that $[b(\lambda)]$ has a reduction $b_M$ to $M$ which is superbasic in $M$ and whose $M$-dominant Newton point is $w_0(\nu_{b(\lambda)})$. By Lemma \ref{lemcharbl} (applied to $w_0b_Mw_0^{-1}$) we have $\kappa_{M}(b_{M})=\lambda^{\sharp_{M}}$. Assume that $x\in S_{\lambda,\eta}(C)$ is such that $(\E_{1,x})_P\times^P M$ has $M$-dominant Newton point $\nu_{b_M}$, where $(\E_{1,x})_P$ is the reduction induced by the reduction $\E_{1}^P$ of $\E_1$. Then the Harder-Narasimhan polygon of this semi-stable $M$-bundle is $-\nu_{b_M}=\nu_{b(\lambda)}^*$ (central in $M$), and in particular $G$-dominant. Hence this $M$-bundle is a reduction of $(\E_{1,x})_P$ and also of $\E_{1,x}$, in other words, the reduction $(\E_{1,x})_P$ is split. In particular, $x$ is then in the Newton stratum for $[b(\lambda)]$. By Lemma \ref{lemkey}, $(\E_{1,x})_P\times^P M\cong \E_{1,\pr_M(x)}^M$.

\noindent{\it Claim.} The complement of the Newton stratum for $[b_M]_M$ in $$\lambda(\xi)\eta(\xi)(U\cap M)(B_{\dR}^+)\eta(\xi)^{-1}(U\cap M)(B_{\dR}^+)/(U\cap M)(B_{\dR}^+)=S^M_{\lambda,\eta}$$ has dimension strictly smaller than $\langle 2\rho_M,\eta\rangle.$

If this claim holds, then the complement of the inverse image of this Newton stratum under the projection $\pr_M:S^G_{\lambda,\eta}\rightarrow S^M_{\lambda,\eta} $  has dimension strictly smaller than $\langle 2\rho,\eta\rangle.$ By the above considerations, it contains the complement of $S^{G,[b(\lambda)]}_{\lambda,\eta}$. Thus it is enough to prove the above claim. Replacing $G$ by $M$ it is thus enough to show the theorem for the case that $G=M$, that $[b(\lambda)]$ is superbasic and that $\lambda^{\diamond}\geq \nu_{b(\lambda)}=\av_G(\lambda)$.

We use induction on the semisimple rank of $G$. If the semisimple rank is 0, then every point is basic, thus the assertion holds. Assume that the semisimple rank is positive. Let $P$ be a maximal standard parabolic subgroup of $G$ (and $M$ its standard Levi subgroup) such that $\av_M(\lambda)\geq \av_G(\lambda)=\nu_{b(\lambda)}$ is minimal among these vectors. For an example of $\lambda\geq \av_M(\lambda)\geq \nu_{b(\lambda)}$ for $G=\GL_7$ and $M=\GL_5\times\GL_2$ see Figure \ref{fig2}.

Since $b(\lambda)$ is superbasic, we have $\nu_{b(\lambda)}^{\sharp_M}\lneq \av_M(\lambda)$. Indeed, otherwise the basic class in $B(M)$ with image $\lambda^{\sharp_M}$ in $\pi_1(M)_{\Gamma}$ would be a reduction of $[b(\lambda)]$ to $M$.

{\setlength{\unitlength}{16 pt}
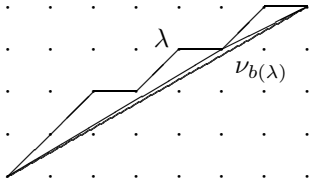
\begin{figure}[h]
\begin{center}
\begin{picture}(8,3.3)(0,0.5)
\multiput(0,0)(1,0){8}{$\cdot$}
\multiput(0,1)(1,0){8}{$\cdot$}
\multiput(0,2)(1,0){8}{$\cdot$}
\multiput(0,3)(1,0){8}{$\cdot$}
\multiput(0,4)(1,0){8}{$\cdot$}
\multiput(0,0.05)(0.035,0.02){201}{{\tiny{$\cdot$}}}

\put(0.1,0.17){\line(1,1){2}}
\put(2.1,2.17){\line(1,0){1}}
\put(3.1,2.17){\line(1,1){1}}
\put(4.1,3.17){\line(1,0){1}}
\put(5.1,3.17){\line(1,1){1}}
\put(6.1,4.17){\line(1,0){1}}

\put(0.1,0.17){\line(5,3){5}}
\put(5.1,3.17){\line(2,1){2}}
\put(5.4,2.6){$\nu_{b(\lambda)}$}
\put(3.5,3.2){$\lambda$}
\end{picture}
\end{center}
\caption{The minimal $\lambda$ and $\av_M(\lambda)$ for $b'$ of Newton slope $\tfrac{4}{7}$} \label{fig2}
\end{figure}
}

\noindent{\it Claim 2.} There is no $[b']\in B(G)$ with $\kappa_G(b')=\lambda^{\sharp_G}$ and $\nu_{b(\lambda)}\lneq \nu_{b'}\lneq \av_M(\lambda)$.

Assume that $[b']$ is a class violating the claim. Then $[b']$ is not basic. Let $M'$ be the centralizer of its Newton point. Then  $\nu_{b(\lambda)}^{\sharp_{M'}}\leq \kappa_{M'}(b')= \lambda^{\sharp_{M'}}$, and by minimality of $\lambda$, we have equality. 
In other words, $\nu_{b'}=\av_{M'}(\lambda)$. Let $M''$ be a maximal standard Levi subgroup containing $M'$ and such that $\av_{M''}(\lambda)\neq \av_G(\lambda)$. Replacing $b'$ by the basic class $[b'']$ of $B(M'')$ with $\kappa_{M''}(b'')=\lambda^{\sharp_{M''}}$ we have $\nu_{b''}\leq \nu_{b'}$ non-basic. Thus we may assume that $M'$ is maximal. But then the condition  $\nu_{b'}=\av_{M'}(\lambda)\lneq \av_M(\lambda)$ violates our choice of $P$ and $M$ above, which proves the claim.

The reduction $\E_{1}^P$ induces for every $x\in S_{\lambda,\eta}(C)$ a reduction $(\E_{1,x})_P$. By Lemma \ref{lemslopekappa}, its slope vector is $v=\av_M(-\lambda)$. We assume that $x$ is not in the basic Newton stratum.

We distinguish two cases and first consider those $x$ where $(\E_{1,x})_P\times^P M$ is a non-semi-stable $M$-bundle. Using the Iwasawa decomposition we write $x$ as $\lambda(\xi)mnG(B_{\dR}^+)/G(B_{\dR}^+)$ for $m\in M(B_{\dR})$ and $n\in N(B_{\dR})$ where $N$ is the unipotent radical of $P$. Then our condition is that $\lambda(\xi)mM(B_{\dR}^+)/M(B_{\dR}^+)$ is in the complement of the basic locus in $S_{\lambda,\eta}^M$. By induction, this is closed and of dimension less than $\langle 2\rho_M, \eta\rangle$. Thus the subspace of $x$ such that $(\E_{1,x})_P\times^P M$ is not semi-stable is the inverse image under $\pr_M$ of a subspace of dimension less than $\langle 2\rho_M, \eta\rangle$, and hence it is of dimension less that $\langle 2\rho, \eta\rangle$.

It remains to consider the locus where $(\E_{1,x})_P\times^P M$ is semi-stable. In other words, we consider the intersection of complement of the basic Newton stratum of $S_{\lambda,\eta}$ with the inverse image under $\pr_M$ of the $M$-basic Newton stratum in $S_{\lambda,\eta}^{M}$. By \cite[Cor.~2.9]{Chen} the Newton point $\nu_{b(\lambda)}$ of $\E_{1,x}$ is less than or equal to that of $((\E_{1,x})_P\times^P M)\times^M G$, which is $(-v)_{\dom}=(\av_M(\lambda))_{\dom}=\av_M(\lambda)$. By Claim 2, this implies that the Newton point of $\E_{1,x}$ is $(-v)_{\dom}$, hence $(\E_{1,x})_P$ is split. Furthermore, we assumed this Newton point to be non-basic. Thus, we can apply Corollary \ref{lemadomspl}, and obtain that $x\in P(\Qp)M(B_{\dR})G(B_{\dR}^+)/G(B_{\dR}^+)\cap S_{\lambda,\eta}(C)$. For sufficiently regular $\eta$, this is indeed a subspace of strictly smaller dimension.
\end{proof}

\begin{remark}
It would be interesting to know if the conclusion also holds for other $\lambda$, or to compute generic Newton points also for modifications of other $G$-bundles. However, the present result is all we need for Theorem \ref{thmclos} below.
\end{remark}

\subsection{Closures of Newton strata}

Let $G$ be again any connected reductive group over $F$. Recall from \cite{FGtorseurs} that we have a bijection between $B(G)$ and the points of $|\Bun_G|$.
\begin{thm}\label{thmclos}
Let $[b'],[b'']\in B(G)$. Then $\E_{b''}\in \overline{\{\E_{b'}\}}$ in $|\Bun_G|$ if and only if $[b']\leq [b'']$ with respect to the partial order on $B(G)$.
\end{thm}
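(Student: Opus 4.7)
One direction of the theorem is already in the literature: by the semicontinuity of Newton points (Kedlaya--Liu \cite[Thm.~7.4.5]{KL}, Scholze--Weinstein \cite[Cor.~22.5.1]{SW}) combined with the local constancy of the Kottwitz map (Fargues--Scholze \cite[Thm.~II.1.3]{FS}), the bijection $|\Bun_G|\to B(G)$ is continuous for the opposite-partial-order topology, so $[b'']\in\overline{\{[b']\}}$ in $|\Bun_G|$ immediately forces $[b']\leq[b'']$. The plan for the converse is to realize $[b'']$ as a specialization of $[b']$ via the Beauville--Laszlo uniformization of $\Bun_G$ by modifications of the trivial bundle $\E_1$, combining the closure relations among semi-infinite cells with the density statement of Theorem \ref{propkey}.

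First I would reduce to $G$ quasi-split via Section \ref{secadinn}: the projection $G\to G_{\ad}$ yields a topological quotient $|\Bun_G|\to|\Bun_{G_{\ad}}|$ compatible with the identifications to $B(\cdot)$, and a subsequent inner twist by a basic element passes to a quasi-split inner form of $G_{\ad}$, preserving the partial orders on both $|\Bun|$ and $B$. Assuming $G$ quasi-split and $[b']\leq[b'']$, pick $\lambda\in X_*(T)$ minimal with $[b(\lambda)]=[b']$. Theorem \ref{propkey} then guarantees that the Newton stratum $S_\lambda^{[b']}$ is open and dense in the semi-infinite cell $S_\lambda\subseteq\Gr_G$. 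Next I would produce a cocharacter $\lambda'\in X_*(T)$, minimal with $[b(\lambda')]=[b'']$, satisfying $\lambda'\leq\lambda$. Granted such a $\lambda'$, Remark \ref{propmvne}(1) gives $S_{\lambda'}\subseteq\overline{S_\lambda}$, and Theorem \ref{propkey} applied to $[b'']$ and $\lambda'$ yields a point $x\in S_{\lambda'}^{[b'']}$; this $x$ lies in $\overline{S_\lambda}$, and since $S_\lambda^{[b']}$ is dense in $S_\lambda$ and therefore in its closure, we have $x\in\overline{S_\lambda^{[b']}}$. Continuity of the Beauville--Laszlo map $\mathrm{BL}_1\colon\Gr_G\to\Bun_G$ on topological spaces then forces $[b'']=|\mathrm{BL}_1|(x)\in\overline{|\mathrm{BL}_1|(S_\lambda^{[b']})}=\overline{\{[b']\}}$ in $|\Bun_G|$.

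The principal obstacle in this plan is the combinatorial construction of $\lambda'$. By Lemma \ref{lemcharbl}, the condition $[b(\mu)]=[b]$ for $\mu\in X_*(T)$ amounts to $\mu^\sharp=\kappa_G(b)$, $w_0(\mu^\diamond)\leq\nu_b$, and minimality of $[b]$ among classes with these invariants. Since $\kappa_G(b')=\kappa_G(b'')$, the cocharacters $\lambda$ and $\lambda'$ must lie in the same Kottwitz coset, so their difference is in the coroot lattice, and the condition $\lambda'\leq\lambda$ amounts to this difference being a non-negative integer combination of simple coroots. The inequality $\nu_{b'}\leq\nu_{b''}$ is precisely what furnishes the room in the coroot cone to write $\lambda'=\lambda-\sum n_i\alpha_i^\vee$ with $n_i\in\Z_{\geq0}$ so that $w_0(\lambda'^\diamond)$ picks out exactly $[b'']$ as its minimum Newton majorant. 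I expect to verify this most conveniently by reducing to $\GL_n$ via a faithful representation (where it becomes explicit Newton-polygon combinatorics that one can check directly) and then deducing the general quasi-split case by a root-system argument using the monotonicity of the parametrization $\mu\mapsto[b(\mu)]$.
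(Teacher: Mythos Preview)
Your overall strategy coincides with the paper's: reduce to the quasi-split case, pick a minimal $\lambda$ with $[b(\lambda)]=[b']$, find a minimal $\lambda'\leq\lambda$ with $[b(\lambda')]=[b'']$, and then combine Theorem~\ref{propkey} with the closure relations among semi-infinite cells to push the specialization through $\mathrm{BL}_1$. The easy direction and the endgame are fine.

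The genuine gap is the construction of $\lambda'$. Your proposal to ``reduce to $\GL_n$ via a faithful representation'' will not work here: minimality of $\lambda$ with respect to $[b(\lambda)]=[b']$ is a condition intrinsic to the coroot lattice of $G$, and under $\rho\colon G\hookrightarrow\GL_n$ neither the minimality condition nor the map $\mu\mapsto [b(\mu)]$ transfers in a usable way (a minimal $\lambda$ for $G$ need not map to a minimal one for $\GL_n$, and a candidate in $X_*(T_{\GL_n})$ has no reason to lie in the image of $X_*(T)$). Likewise, the vague ``monotonicity of $\mu\mapsto[b(\mu)]$'' is not available: decreasing $\mu$ can jump $[b(\mu)]$ past $[b'']$ or land on the wrong Kottwitz class altogether, so one cannot simply subtract coroots and hope to hit a minimal $\lambda'$ for $[b'']$.

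The paper resolves this by first using induction on the partial order to reduce to the case where $[b']<[b'']$ are \emph{adjacent} (no class strictly between). Chai's theory \cite[Thm.~7.4]{Chai} then says the reduced edge subsets of $[b']$ and $[b'']$ differ in exactly one element, corresponding to a simple relative root $\alpha$; choosing an absolute lift $\alpha_0$ and setting $\beta_0=w_0(\alpha_0)$ (a negative root), one takes $\lambda'':=\lambda+\beta_0^{\vee}$, which satisfies $\lambda''<\lambda$ by construction. One then checks $[b(\lambda'')]=[b'']$ and that $\lambda''$ is again minimal, the latter via the explicit description from \cite{HamacherViehmann} of minimal $\lambda$'s in terms of the integer ceilings $\lceil\langle\omega_i,\nu_b\rangle\rceil$. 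This last description would in fact also give a direct route to your $\lambda'$ without the adjacency reduction, but you do not invoke it; as written, your combinatorial step is not justified.
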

\begin{proof}
By Section \ref{secadinn} we may assume that $G$ is quasi-split.

One direction is already known: if $\E_{b''}\in \overline{\{\E_{b'}\}}$, the semi-continuity properties of Kedlaya-Liu \cite[Thm.~7.4.5]{KL} and Scholze-Weinstein \cite[Cor.~22.5.1]{SW} yield $\nu_{b'}\leq \nu_{b''}$. Local constancy of the Kottwitz point as in \cite[Thm.~III.2.7]{FS} then proves that $[b']\leq [b'']$.

Assume conversely that $[b']<[b'']$. We want to show that $\E_{b''}\in \overline{\{\E_{b'}\}}$. Using induction we may assume that there is no $[\tilde b]\in B(G)$ with $[b']<[\tilde b]<[b'']$. By \cite[Thm.~7.4(ii)]{Chai}, this implies that there is a unique element of the reduced edge subset of $[b'']$ that is not contained in the reduced edge subset of $[b']$. Let $\alpha$ be the associated simple relative root of $G$, let $\alpha_0$ be a corresponding simple absolute root and $\beta_0=w_0(\alpha_0)$, a negative root.

Let $\lambda'\in X_*(T)$ be a minimal element satisfying $[b(\lambda')]=[b']$. Thus it has the properties discussed in Remark \ref{remhv21}. In particular, all such $\lambda'$ have the same value of $\langle\rho,\lambda'\rangle$. Let $\lambda''=\lambda'+\beta_0^{\vee}$. Then the reduced edge subset of $w_0(\lambda'')$ contains one new edge, corresponding to the simple root $\alpha$. By minimality of $\lambda'$ and our choice of $\alpha$, the reduced edge subset of $w_0(\lambda'')$ coincides with that of $[b'']$, which implies $[b(\lambda'')]=[b'']$. Furthermore, we have $\lambda''<\lambda'$ and $\langle\rho,\lambda''\rangle=\langle\rho,\lambda'\rangle-1$.

We claim that an element $\tilde\lambda\in X_*(T)$ with $[b(\tilde\lambda)]=[b'']$ is minimal with this property if and only if $\langle\rho,\tilde\lambda\rangle=\langle\rho,\lambda'\rangle-1$. To show this, we use the description of such elements in terms of the element $\lambda_{b''}\in X^*(\hat T)_{\Gamma}$ associated with $[b'']$. Recall from Remark \ref{remhv21} that $\lambda_{b''}$ is defined by the condition that the pairings with the fundamental weights are the minimal integers greater than or equal to the corresponding pairing for $\nu_{b''}$, and similarly for $\lambda_{b'}$. These integers coincide (for $b''$ resp.~$b'$) for all fundamental weights except for the one corresponding to $\alpha$, where they differ by 1. This proves the claim, which in turn implies that $\lambda''$ is minimal with $[b(\lambda'')]=[b'']$.

By the closure relations for semi-infinite cells, see Remark \ref{propmvne} (1), 
we have $S_{\lambda''}\subseteq \overline{S_{\lambda'}}$. By Theorem \ref{propkey}, the image of $S_{\lambda'}$ in $\Bun_G$ lies in the closure of $\{\E_{b'}\}$, and the image of $S_{\lambda''}$ contains $\{\E_{b''}\}$. The theorem follows.
\end{proof}
Theorem \ref{thmclos} determines the topology on $\Bun_G$ completely:
\begin{cor}\label{cortop}
Let $M\subset |\Bun_G|$. Then $\overline{M}=\bigcup_{\E\in M}\overline{\{\E\}}.$
\end{cor}
\begin{proof}
 By \cite[Thm.~III.2.7]{FS}, the Kottwitz map $|\Bun_G|\rightarrow \pi_1(G)_{\Gamma}$ is locally constant, thus it is enough to consider the inverse image $\Bun_{G,c}$ of any fixed element $c$ of the discrete set $\pi_1(G)_{\Gamma}$. We may replace $G$ by its adjoint group, and consider each simple factor of it separately. Thus we may assume that $G$ is adjoint and simple.

We want to show that for any subset $M\subset \Bun_{G}$, the closure $\overline M$ coincides with the union of the closures of the elements of $M$. For $[b]\in B(G)$, the set $$S_{[b]}:=\{[b']\in B(G)\mid \kappa_G(b')=\kappa_G(b);~ [b']\ngeq [b]\}$$ is the subset of $B(G)$ corresponding to $|\Bun_{G,\kappa_G(b)}|\setminus \overline{\{\E_{b}\}}$. Thus it is enough to prove that every set $S_{[b]}$ is finite.

Let $[b']\in S_{[b]}$. Since $\nu_{b'}-\nu_b$ is Galois-invariant, we can write it as a linear combination $\sum_i d_i\alpha_i^{\vee}$ of the simple relative coroots. Let $[b_0]$ be the unique basic class with $\kappa_G(b_0)=\kappa_G(b')$. We have $[b']\geq [b_0]$, hence the coefficients $d_i$ are bounded below by the corresponding coefficients of $\nu_{b_0}-\nu_b$, that is, independently of $[b']$. Since $[b']\ngeq [b]$, at least one $d_{i_0}$ for some $i_0$ is negative. It remains to show that $d_{i_0}<0$ implies an upper bound on all $d_i$. We proceed by induction on the distance of $\alpha_i$ and $\alpha_{i_0}$ in the Dynkin diagram. For the induction step we assume that $d_j<c$ for some $j$ and some $c$. Then
\begin{align*}
\langle \nu_{b'},\alpha_j\rangle&=\langle \nu_{b},\alpha_j\rangle+\langle \sum d_i\alpha_i^{\vee},\alpha_j\rangle\\
&=\langle \nu_{b},\alpha_j\rangle+ 2d_j+\langle \sum_{i'} d_{i'}\alpha_{i'}^{\vee},\alpha_j\rangle
\end{align*}
where the first sum is taken over all simple roots and the second only over the neighbors of $\alpha_j$ in the Dynkin diagram. The two summands $\langle \nu_{b},\alpha_j\rangle+ 2d_j$ are bounded above independently of $b'$, and $\langle \alpha_{i'}^{\vee},\alpha_j\rangle$ is negative for each $i'$. Since $\nu_{b'}$ is dominant, we have $\langle \nu_{b'},\alpha_j\rangle\geq 0$, which implies an upper bound on $\sum_{i'} d_{i'}(-\langle\alpha_{i'}^{\vee},\alpha_j\rangle )$, which we view as a linear combination of the $d_{i'}$ with positive coefficients. We saw above that each $d_{i'}$ is bounded below, thus this also implies an upper bound on each $d_{i'}$ individually. This completes the induction.
\end{proof}

We end this section by applying Theorem \ref{thmclos} to study closures of Newton strata in affine Schubert cells. Let $\{\mu\}$ be a conjugacy class of cocharacters of $G$ and let $b\in G(\breve{\Qp})$ be basic. Recall that for $[b']\in B(G)$ we denote by $\Gr_{G,\mu,b}^{ [b']}$ the Newton stratum for $[b']$ in the affine Schubert cell for $\mu$ (using modifications of $\E_b$).
\begin{cor}\label{corclosns}
Let $\mu$ and $b$ be as above and let $[b']\in \BGmub$. Then
$$\overline{\Gr_{G,\mu,b}^{ [b']}}=\coprod_{[b'']\geq [b']}\Gr_{G,\mu,b}^{ [b'']}.$$
\end{cor}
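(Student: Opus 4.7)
The plan is to deduce Corollary \ref{corclosns} as a formal consequence of Theorem \ref{thmclos} via topological properties of the Beauville-Laszlo map ${\rm BL}_b: \Gr_{G,\mu} \to \Bun_G$, using that by definition $\Gr_{G,\mu,b}^{[b']} = |{\rm BL}_b|^{-1}(\{[\E_{b'}]\})$.

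One inclusion is immediate. By Theorem \ref{thmclos}, $\overline{\{[\E_{b'}]\}} = \{[\E_{b''}] : [b''] \geq [b']\}$ in $|\Bun_G|$, so by continuity of $|{\rm BL}_b|$ we have
\[
\overline{\Gr_{G,\mu,b}^{[b']}} \subseteq |{\rm BL}_b|^{-1}\bigl(\overline{\{[\E_{b'}]\}}\bigr) = \coprod_{[b''] \geq [b']} \Gr_{G,\mu,b}^{[b'']}.
\]

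For the reverse inclusion, I would fix $[b''] \geq [b']$ in $\BGmub$ and $x'' \in |\Gr_{G,\mu,b}^{[b'']}|$, and show that every open neighborhood of $x''$ meets $\Gr_{G,\mu,b}^{[b']}$. The cleanest approach is to use openness of $|{\rm BL}_b|$ on topological spaces: granting this, for any open neighborhood $V$ of $x''$ the image $|{\rm BL}_b|(V)$ is an open neighborhood of $[\E_{b''}]$ in $|\Bun_G|$, and since $[\E_{b''}] \in \overline{\{[\E_{b'}]\}}$ by Theorem \ref{thmclos}, this open set contains $[\E_{b'}]$. This produces a point of $V$ lying in $\Gr_{G,\mu,b}^{[b']}$, as required.

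The main obstacle is the openness of $|{\rm BL}_b|$. This should follow from the cohomological smoothness of the Fargues-Scholze charts $\pi_b: \mathcal{M}_b \to \Bun_G$ in \cite[Thm.~IV.3.5]{FS}, via the fact that cohomologically smooth morphisms of small v-stacks are universally open on underlying topological spaces, once ${\rm BL}_b$ is compared locally with a suitable such chart. Alternatively, one can sidestep openness entirely and argue inductively via the partial order on $B(G)$: reduce to covering relations $[b'] \lessdot [b'']$, choose minimal $\lambda', \lambda'' \in X_*(T)$ with $[b(\lambda')] = [b']$, $[b(\lambda'')] = [b'']$ and $\lambda'' \leq \lambda'$ as in the proof of Theorem \ref{thmclos}, and then combine the closure relation $S_{\lambda''} \subseteq \overline{S_{\lambda'}}$ from Remark \ref{propmvne}(1) with the density of the generic Newton stratum in each semi-infinite cell established in Theorem \ref{propkey}, finally using openness of Newton strata for larger Newton polygons to transport this to a statement about arbitrary points of $\Gr_{G,\mu,b}^{[b'']}$.
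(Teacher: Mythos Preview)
Your approach via openness of $|{\rm BL}_b|$ is correct and matches the paper's proof, which cites \cite[Prop.~2.11]{Hansen2} for precisely the statement that $\overline{\Gr_{G,\mu,b}^{[b']}}$ is the union of those Newton strata for $[b'']$ with $\E_{b''}\in\overline{\{\E_{b'}\}}$, and then applies Theorem \ref{thmclos}. One suggestion: rather than routing through the charts $\mathcal M_b$ (which are charts for $\Bun_G$, not for $\Gr_{G,\mu}$, so the comparison you gesture at is indirect), it is cleaner to note that ${\rm BL}_b$ is the base change of the second projection of the Hecke correspondence ${\rm Hecke}_\mu\to\Bun_G$ along the point $\Spd(C)\to\Bun_G$ given by $\E_b$; that projection has fibers that are twisted forms of $\Gr_{G,\mu^{-1}}$ and is cohomologically smooth, hence so is ${\rm BL}_b$, and $|{\rm BL}_b|$ is open.

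Your alternative argument via semi-infinite cells does not work as written. The closure relation $S_{\lambda''}\subseteq\overline{S_{\lambda'}}$ and the density statement of Theorem \ref{propkey} live in $\Gr_G$, not in $\Gr_{G,\mu}$, and say nothing about neighborhoods of an \emph{arbitrary} point $x''\in\Gr_{G,\mu,b}^{[b'']}$. The phrase ``openness of Newton strata for larger Newton polygons'' does not bridge this gap: knowing that $\bigcup_{[\tilde b]\leq [b']}\Gr_{G,\mu,b}^{[\tilde b]}$ is open does not force it to meet a given neighborhood of $x''$ without some input tantamount to openness of $|{\rm BL}_b|$ again.
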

\begin{proof}
The proof of \cite[Prop.~2.11]{Hansen2} shows that $\overline{\Gr_{G,\mu,b}^{ [b']}}$ is a union of those Newton strata corresponding to $[b'']$ with $\E_{b''}\in \overline{\{\E_{b'}\}}$. (In \cite[Prop.~2.11]{Hansen2} this statement is shown for $\mu$ minuscule and $[b]=[1]$, but the present more general statement is shown by the same argument.) Then the corollary follows from Theorem \ref{thmclos}.
\end{proof}

\section{Newton strata in the weakly admissible locus}\label{secnswa}

In this section, we always fix a geometric conjugacy class $\{\mu\}$ of cocharacters $\mathbb G_m\rightarrow G_{\overline {\Qp}}$. By $\mu$ we denote the representative in $X_*(T)_{\dom}$.

Furthermore, we fix a basic element $b\in G(\breve{\Qp})$.

\subsection{The Hodge-Newton decomposition for modifications of $G$-bundles}\label{secHNdec}

The following is a variant of the definition of Hodge-Newton-decomposability from \cite[Def.~3.1]{Chen}. It requires a little bit more than the notion of HN-reducibility in \cite[Def.~4.28]{RV}.
\begin{definition}\label{defHNdec}
\begin{enumerate}
\item Let $[b']\in B(G)$ and $\delta\in X_*(A)_{\Q,\dom}$ with $\nu_{b'}\leq \delta$. Then $(G,[b'],\delta)$ is Hodge-Newton decomposable if there is a proper standard Levi subgroup $M$ of the quasi-split inner form $H$ of $G$ containing the centralizer of $\nu_{b'}$ and such that $\delta^{\diamond}-\nu_{b'}\in \langle \Phi_{0,M}^{\vee}\rangle_{\Q}$. 
Otherwise, the triple is called Hodge-Newton indecomposable.
\item Let $\mu\in X_*(T)_{\dom}$ and $[b]\in B(G)$ basic. Then $[b']\in \BGmub$ is Hodge-Newton decomposable if the triple $(G,[b'],\nu_b(\mu^{-1,\diamond})_{\dom})$ is Hodge-Newton decomposable. Otherwise, $[b']$ is called Hodge-Newton indecomposable.
\end{enumerate}
\end{definition}
\begin{ex}
The basic element of $\BGmub$ is Hodge-Newton indecomposable. In particular, every set $\BGmub$ contains a Hodge-Newton indecomposable element.
\end{ex}
\begin{remark}\label{rem73}
Let $[b']\in B(G)$ and $\delta\in X_*(A)_{\Q,\dom}$ with $\nu_{b'}\leq \delta$. Then $(G,[b'],\delta)$ is Hodge-Newton decomposable if and only if $(G_{\ad},[b'_{\ad}],\delta_{\ad})$ is Hodge-Newton decomposable, and analogously for $\mu\in X_*(T)_{\dom}$, a basic element $[b]\in B(G)$ and $[b']\in \BGmub$.

Let $b_0\in G(\breve{\Qp})$ be basic. Then $(G,[b'],\delta)$ is Hodge-Newton indecomposable if and only if $(G_{b_0},[b'b_0^{-1}],\delta\nu_{b_0}^{-1})$ is Hodge-Newton indecomposable. Similarly, $[b']\in \BGmub$ is Hodge-Newton indecomposable if and only if $[b'b_0^{-1}]\in B(G_{b_0},\mu,bb_0^{-1})$ is Hodge-Newton indecomposable.
\end{remark}

\begin{lemma}\label{corhndecstrsm}
Assume that $G_{\ad}$ is simple and that $[b']\in B(G)$ is not basic. Let $\delta\in X_*(A)_{\Q,\dom}$ with $\nu_{b'}\leq \delta$. Then $(G,[b'],\delta)$ is Hodge-Newton decomposable if and only if there is a proper standard Levi subgroup $M$ of the quasi-split inner form $H$ of $G$ such that $\delta-\nu_{b'}\in \langle \Phi_{0,M}^{\vee}\rangle_{\Q}$.
\end{lemma}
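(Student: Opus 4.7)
The forward direction is immediate: the definition of Hodge-Newton decomposability already provides a proper standard Levi $M$ of $H$ with $\delta-\nu_{b'}\in \langle\Phi_{0,M}^{\vee}\rangle_{\Q}$, where I use that $\delta=\delta^{\diamond}$ since $\delta\in X_*(A)_{\Q}$. My plan is therefore to concentrate on the converse: given any proper standard Levi $M$ of $H$ satisfying $\delta-\nu_{b'}\in \langle\Phi_{0,M}^{\vee}\rangle_{\Q}$, I will enlarge it to a proper standard Levi $M'$ that additionally contains the centralizer $M_{\nu_{b'}}$.

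I would set up notation as follows. Since $\nu_{b'}\leq \delta$, write $\delta-\nu_{b'}=\sum_{\alpha\in \Delta_0}c_\alpha \alpha^{\vee}$ with $c_\alpha \geq 0$. Let $I_1=\{\alpha\in \Delta_0\mid c_\alpha>0\}$ and let $I_0\subset \Delta_0$ denote the set of simple relative roots of $M_{\nu_{b'}}$. The natural candidate is the standard Levi $M'$ with simple roots $I_0\cup I_1$: it contains $M_{\nu_{b'}}$ by construction, and the decomposition condition $\delta-\nu_{b'}\in \langle\Phi_{0,M'}^{\vee}\rangle_{\Q}$ is automatic from $I_1\subseteq I_0\cup I_1$. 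The one nontrivial point is to show that $M'$ is proper, i.e., $I_0\cup I_1\subsetneq \Delta_0$. If $\delta=\nu_{b'}$ there is nothing to do: $M'=M_{\nu_{b'}}$ works and is proper since $[b']$ is non-basic. So I assume $I_1\neq \emptyset$ from now on.

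The crux, which I expect to be the main obstacle, is the properness of $M'$. I would argue by contradiction: assume $I_0\cup I_1=\Delta_0$. For any $\beta\in \Delta_0\setminus I_1$ we have $\beta\in I_0$, hence $\langle \beta,\nu_{b'}\rangle=0$, and therefore
$$0\leq \langle \beta,\delta\rangle= \sum_{\alpha\in I_1}c_\alpha \langle \beta,\alpha^{\vee}\rangle \leq 0,$$
where the left inequality is dominance of $\delta$ and the right inequality uses that $\beta$ is a simple root distinct from each $\alpha\in I_1$. Since each $c_\alpha>0$, this forces $\langle \beta,\alpha^{\vee}\rangle=0$ for every $\alpha\in I_1$, so no vertex of $\Delta_0\setminus I_1$ is joined by an edge of the relative Dynkin diagram of $H$ to any vertex of $I_1$.

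This is exactly the place where the hypothesis that $G_{\ad}$ is simple enters: absolute simplicity forces the absolute Dynkin diagram to be connected, and in the quasi-split case the relative Dynkin diagram of $H_{\ad}$ (obtained by passing to Galois orbits of simple absolute roots) inherits connectedness. Since both $I_1$ and $\Delta_0\setminus I_1$ are non-empty (the latter because $I_1\subseteq I\subsetneq \Delta_0$), the total absence of edges between them contradicts connectedness, completing the proof. The combinatorial step of turning dominance of $\delta$ tested against a simple root in $I_0$ into a Dynkin-disconnection is the only subtle ingredient; everything else is bookkeeping.
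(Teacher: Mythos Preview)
Your proof is correct and follows essentially the same approach as the paper: both arguments write $\delta-\nu_{b'}=\sum_{\alpha\in\Delta_0}c_\alpha\alpha^\vee$, set $I_1=\{\alpha:c_\alpha>0\}$, and use the key computation that for a simple root $\beta\notin I_1$ adjacent to $I_1$ one has $\langle\beta,\delta-\nu_{b'}\rangle<0$, hence $\langle\beta,\nu_{b'}\rangle>0$ by dominance of $\delta$. The only organizational difference is that the paper argues directly---it picks one such $\beta$ (which exists by connectedness of the relative Dynkin diagram) and takes the maximal Levi with simple roots $\Delta_0\setminus\{\beta\}$---whereas you argue the contrapositive and take the smaller Levi with simple roots $I_0\cup I_1$; either choice works and the underlying computation is identical.
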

\begin{proof}
If $(G,[b'],\delta)$ is Hodge-Newton decomposable, the condition is automatically satisfied. Assume conversely that $(G,[b'],\delta)$ satisfies the condition for some $M$. We may choose $M$ to be minimal with this condition. Then $\Delta'=\Delta_{0,M}$ is a proper subset of $\Delta_0$ with $\delta-\nu_{b'}\in \langle \Delta'\rangle_{\Q}$. If $\Delta'$ is empty, then $\delta=\nu_{b'}$, in particular $(G,[b'],\delta)$ satisfies the definition of Hodge-Newton decomposability (for the centralizer of $\nu_{b'}$). Assume that this is not the case. Let $\alpha\in \Delta_0\setminus\Delta'$ be such that one of its neighbors in the Dynkin diagram is contained in $\Delta'$.  We write $\delta-\nu_{b'}$ as a non-negative linear combination of positive coroots with coefficients $c_{\beta}$ for $\beta\in \Phi^+\cap \langle \Delta'\rangle_{\Q}$. We have $\langle\alpha, \nu_{b'}\rangle=\langle \alpha,\delta\rangle-\langle\alpha, \delta-\nu_{b'}\rangle$. Since $\delta$ is dominant, the first summand on the right hand side is non-negative. The pairing $\langle \alpha,\delta-\nu_{b'}\rangle$ is negative since it is a linear combination of the $\langle \alpha,\beta^{\vee}\rangle$ for all neighbors $\beta$ of $\alpha$ in the Dynkin diagram with coefficients $c_{\beta}\geq 0$, and at least one of the $c_{\beta}$ is non-zero. Thus $\langle \alpha,\nu_{b'}\rangle> 0$, and the maximal standard Levi subgroup of $G$ corresponding to $\alpha$ contains the centralizer of $\nu_{b'}$, and also $M$. In particular, $(G,[b'],\delta)$ satisfies the condition for Hodge-Newton decomposability for this subgroup.
\end{proof}

We assume for the next lemma that $G$ is quasi-split, and fix a maximal split torus $A$ and a Borel subgroup containing its centralizer. Let $[b_1],[b_2]\in B(G)$ with $\kappa_G(b_1)=\kappa_G(b_2)$. By \cite[Thm.~6.5]{Chai}, the join of $[b_1],[b_2]$ in $B(G)$ with respect to $\leq$ exists, i.e.~there is a unique minimal element in $\mathcal N(G)$ which is greater than or equal to $[b_1]$ and $[b_2]$.
\begin{lemma}\label{lemjoinprecs}
Let $G$ be quasi-split. Let $[b_1],[b_2]\in B(G)$ and $\delta\in X_*(A)_{\Q,\dom}$ with $\kappa_G(b_1)=\kappa_G(b_2)$ and $[b_1],[b_2]\in  B(G)$ with $\nu_{b_1},\nu_{b_2}\leq \delta$ two Hodge-Newton indecomposable classes. Let $[b']$ be the join of $[b_1],[b_2]$. Then $[b']$ is also Hodge-Newton indecomposable.
\end{lemma}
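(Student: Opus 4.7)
The plan is to argue by contradiction: I assume $[b']$ is Hodge-Newton decomposable, and exploit the minimality of the join to force one of the $[b_i]$ to be HN decomposable as well. By Definition \ref{defHNdec} there is a proper standard Levi $M\subseteq G$ containing the centralizer of $\nu_{b'}$ with $\delta-\nu_{b'}\in\langle\Phi_{0,M}^{\vee}\rangle_{\Q}$. I then fix any $\alpha\in\Delta_0\setminus\Delta_{0,M}$; the centralizer condition forces $\langle\alpha,\nu_{b'}\rangle>0$. Writing $\delta-\nu_{b'}=\sum_{\gamma\in\Delta_0}c_\gamma\gamma^\vee$ and $\nu_{b'}-\nu_{b_i}=\sum_{\gamma\in\Delta_0}c_\gamma^{(i)}\gamma^\vee$ as non-negative $\Q$-combinations of simple coroots (possible because the three elements share a common Kottwitz image), the decomposability hypothesis forces $c_\alpha=0$.

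Next I split into two cases. If $c_\alpha^{(1)}>0$ and $c_\alpha^{(2)}>0$, I perturb the Newton point of the join by setting $\nu':=\nu_{b'}-\epsilon\alpha^\vee$ for a rational $\epsilon$ with $0<\epsilon\leq\min(c_\alpha^{(1)},c_\alpha^{(2)},\langle\alpha,\nu_{b'}\rangle/2)$. Dominance of $\nu'$ is preserved: for $\beta=\alpha$ the choice of $\epsilon$ keeps $\langle\alpha,\nu'\rangle\geq 0$, while for simple $\beta\neq\alpha$ the non-positivity of the off-diagonal Cartan integer $\langle\beta,\alpha^\vee\rangle$ gives $\langle\beta,\nu'\rangle\geq\langle\beta,\nu_{b'}\rangle\geq 0$. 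The vector $\alpha^\vee$ has trivial image in $\pi_1(G)$, so $\nu'$ remains compatible with $\kappa_G(b')$; and the bound on $\epsilon$ ensures $\nu'\geq\nu_{b_i}$ for both $i$. Kottwitz's classification then yields $[b'']\in B(G)$ with Newton point $\nu'$, giving $[b_1],[b_2]\leq[b'']\lneq[b']$ and contradicting the minimality of the join.

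In the remaining case $c_\alpha^{(i_0)}=0$ for some $i_0\in\{1,2\}$. I then observe
\[
\langle\alpha,\nu_{b'}-\nu_{b_{i_0}}\rangle=\sum_{\gamma\in\Delta_0}c_\gamma^{(i_0)}\langle\alpha,\gamma^\vee\rangle\leq 0,
\]
since the $\gamma=\alpha$ term vanishes by hypothesis and the remaining terms are products of non-negative coefficients with non-positive off-diagonal Cartan integers. Hence $\langle\alpha,\nu_{b_{i_0}}\rangle\geq\langle\alpha,\nu_{b'}\rangle>0$, so $\alpha$ lies outside the centralizer of $\nu_{b_{i_0}}$, and the standard Levi $M_{i_0}$ with $\Delta_{0,M_{i_0}}=\Delta_0\setminus\{\alpha\}$ is proper and contains this centralizer. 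Computing the coefficient of $\alpha^\vee$ in the sum $\delta-\nu_{b_{i_0}}=(\delta-\nu_{b'})+(\nu_{b'}-\nu_{b_{i_0}})$ yields $c_\alpha+c_\alpha^{(i_0)}=0$, whence $\delta-\nu_{b_{i_0}}\in\langle\Phi_{0,M_{i_0}}^{\vee}\rangle_{\Q}$. Thus $[b_{i_0}]$ is Hodge-Newton decomposable, contradicting the hypothesis.

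The step I expect to be the main obstacle is confirming that the perturbation $\nu'$ genuinely corresponds to a class in $B(G)$; this is settled by appealing to Kottwitz's classification of $B(G)$ in terms of pairs $(\kappa,\nu)$ subject only to the evident $\pi_1$-compatibility, which is preserved because $\alpha^\vee$ has trivial image in $\pi_1(G)$. The argument deliberately sidesteps any reduction to the $G_{\ad}$-simple case and uses nothing about joins beyond their existence.
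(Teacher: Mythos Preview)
Your Case 2 argument is correct and is essentially the argument the paper gives. The gap is in Case 1, precisely at the step you flagged as the main obstacle: the assertion that Kottwitz's classification produces a class $[b'']\in B(G)$ with Newton point exactly $\nu'=\nu_{b'}-\epsilon\alpha^{\vee}$ is false. Kottwitz's map $(\kappa_G,\nu):B(G)\to\pi_1(G)_{\Gamma}\times\mathcal N(G)$ is injective, but its image is \emph{not} the set of all pairs satisfying the $\pi_1$-compatibility; the Newton points form a discrete subset of $X_*(A)_{\Q,\dom}$. Already for $G=\GL_2$ and $\kappa=0$ the only Newton points are $(n,-n)$ with $n\in\Z_{\geq 0}$, so $(1-\epsilon,-1+\epsilon)$ is not a Newton point for small rational $\epsilon>0$. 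Since any genuine upper bound $[b'']\geq[b_1],[b_2]$ in $B(G)$ must satisfy $[b'']\geq[b']$ by the very definition of the join, you cannot manufacture a strictly smaller upper bound by perturbing $\nu_{b'}$ inside $X_*(A)_{\Q}$; the discretization can push the join back up to $\nu_{b'}$.

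The paper's proof does not attempt to handle Case~1 at all: it invokes Chai's description of joins \cite[Thm.~6.5]{Chai} to see that whenever $\langle\alpha,\nu_{b'}\rangle>0$ (i.e.\ $\alpha$ lies in the reduced edge subset of $[b']$), one has $\langle\tilde\omega_\alpha,\nu_{b'}\rangle=\langle\tilde\omega_\alpha,\nu_{b_j}\rangle$ for some $j\in\{1,2\}$, which in your notation says $c_\alpha^{(j)}=0$. In other words, Chai's theorem shows that your Case~1 is vacuous, and the paper then concludes exactly as you do in Case~2. To repair your argument you should either quote Chai's result to eliminate Case~1, or prove directly (without appealing to a nonexistent $[b'']$) that $\langle\tilde\omega_\alpha,\nu_{b'}\rangle=\max_i\langle\tilde\omega_\alpha,\nu_{b_i}\rangle$ at every break point $\alpha$ of the join; the latter is precisely the content of Chai's description and does not follow from the mere existence of the join.
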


\begin{proof}
Since $\nu_{b_1}\leq \delta$ and $\nu_{b_2}\leq \delta$, we have $\nu_{b'}\leq\delta$.
Assume that $[b']$ is Hodge-Newton decomposable. Thus there is an $\alpha\in \Delta_0$ with $\langle \alpha,\nu_{b'}\rangle> 0$ such that
\begin{equation}\label{eqjoinprecs}
\langle\tilde\omega_{\alpha},\delta-\nu_{b'}\rangle=0.
\end{equation}

In the language of \cite[6]{Chai}, $\langle\alpha_i,\nu_{b'}\rangle> 0$ for some $\alpha_i\in \Delta_0$ means that $\alpha_i$ corresponds to an element of the reduced edge subset of $[b']$, which is by \cite[Thm.~6.5]{Chai} a subset of the union of the reduced edge subsets of $[b_1]$ and $[b_2]$. In particular, we have for these $\alpha_i$ that $\langle \omega_i,\nu_{b'}\rangle=\langle \omega_i,\nu_{b_j}\rangle$ and $\langle \alpha_i,\nu_{b_j}\rangle>0$ for some $j\in \{1,2\}$. Together with \eqref{eqjoinprecs} this is a contradiction to the Hodge-Newton indecomposability of $[b_j]$.
\end{proof}

\begin{cor}\label{cormaxhnind}
The set $\BGmub$ contains a unique maximal Hodge-Newton indecomposable element.
\end{cor}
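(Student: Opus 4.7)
The plan is to deduce this almost formally from Lemma \ref{lemjoinprecs}. By Remark \ref{rem73}, Hodge-Newton (in)decomposability is invariant both under passage to the adjoint group and under twisting by a basic element, so I may assume that $G$ is quasi-split (by passing to a quasi-split inner form of $G_{\ad}$, replacing $b$ accordingly). Set $\delta:=\nu_b(\mu^{-1,\diamond})_{\dom}$, so $\BGmub$ consists of the classes $[b']\in B(G)$ with a fixed value of $\kappa_G$ and with $\nu_{b'}\leq\delta$.

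Existence of at least one maximal Hodge-Newton indecomposable element is immediate, since $\BGmub$ is finite and the unique basic class $[b'_0]\in\BGmub$ is Hodge-Newton indecomposable by the example following Definition~\ref{defHNdec}. For uniqueness, I would let $[b'_1],[b'_2]\in\BGmub$ be two maximal Hodge-Newton indecomposable elements and form their join $[b']$ in $B(G)$: since the Kottwitz invariant is constant on $\BGmub$, this join exists by \cite[Thm.~6.5]{Chai}, carries the same Kottwitz invariant, and has Newton point equal to the least upper bound of $\nu_{b'_1}$ and $\nu_{b'_2}$ in $X_*(A)_{\Q,\dom}$. Because $\nu_{b'_1},\nu_{b'_2}\leq\delta$, the join satisfies $\nu_{b'}\leq\delta$, hence $[b']\in\BGmub$.

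Lemma~\ref{lemjoinprecs}, applied to $[b'_1]$, $[b'_2]$ and the bound $\delta$, then shows that $[b']$ is Hodge-Newton indecomposable. Since $[b'_i]\leq[b']$ and both $[b'_i]$ were chosen to be maximal among Hodge-Newton indecomposable elements of $\BGmub$, one concludes $[b'_1]=[b']=[b'_2]$, giving uniqueness.

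The whole statement is therefore a lattice-theoretic consequence of the closure of the set of Hodge-Newton indecomposable elements under joins: all the genuine work has already been carried out in Lemma~\ref{lemjoinprecs}, so there is no serious obstacle here beyond checking that $\BGmub$ itself is closed under the join operation, which is the content of the bound $\nu_{b'}\leq\delta$ above.
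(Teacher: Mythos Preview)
Your proof is correct and follows essentially the same route as the paper: reduce to the quasi-split case via Remark~\ref{rem73}, then use Lemma~\ref{lemjoinprecs} to show that the join of Hodge-Newton indecomposable elements in $\BGmub$ is again Hodge-Newton indecomposable. The only cosmetic difference is that the paper takes the join of all Hodge-Newton indecomposable elements at once (applying the lemma inductively), whereas you argue pairwise with two hypothetical maximal elements; these are equivalent formulations of the same idea.
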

\begin{proof}
By Remark \ref{rem73} we may replace $G$ by the quasi-split inner form of its adjoint group and thus assume that $G$ is quasi-split. Let $[b_{\max}]$ be the join of the finite, non-empty set of Hodge-Newton indecomposable elements of $\BGmub$. By Lemma \ref{lemjoinprecs}, $[b_{\max}]$ is Hodge-Newton indecomposable.
\end{proof}

\begin{lemma}\label{lemcondhonedec}
Assume that $G$ is quasi-split. Let $\mu\in X_*(T)_{\dom}$ and let $[b]\in B(G)$ be basic. Let $[b']\in \BGmub$. Assume that $M$ is a proper standard Levi subgroup of $G$ such that there is a reduction $[b'_M]_M$ of $[b']$ to $M$ with $M$-dominant Newton point $\nu_{b'}$ and $\nu_b+(-\mu^{\diamond})_{\dom}-\nu_{b'}\in \langle \Phi_{0,M}^{\vee}\rangle_{\Q}$. Then there is a reduction $[b_M]_M$ of $[b]$ to $M$ such that $[b'_M]_M\in B(M,w_{0,M}w_0\mu,b_M).$

This applies in particular to all Hodge-Newton decomposable $[b']\in \BGmub$ and all proper Levi subgroups $M$ of $G$ containing the centralizer of $\nu_{b'}$ such that $\nu_b+(-\mu^{\diamond})_{\dom}-\nu_{b'}\in \langle \Phi_{0,M}^{\vee}\rangle_{\Q}$.
\end{lemma}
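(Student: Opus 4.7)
The plan is to define $[b_M]_M\in B(M)$ as the unique basic class with Kottwitz invariant $\kappa_M(b_M)=\kappa_M(b'_M)+(w_{0,M}w_0\mu)^{\sharp_M}$, and then to verify the two defining conditions of $B(M,w_{0,M}w_0\mu,b_M)$ together with the claim that this $[b_M]_M$ is a reduction of $[b]$. The Kottwitz condition is built into the definition. That $[b_M]_M$ maps to $[b]$ in $B(G)$ follows by noting that $w_{0,M}w_0\mu$ and $\mu$ lie in the same $W$-orbit and hence have equal image $\mu^{\sharp}$ in $\pi_1(G)$, so the image of $\kappa_M(b_M)$ in $\pi_1(G)_{\Gamma}$ equals $\kappa_G(b'_M)+\mu^{\sharp}=\kappa_G(b)$ using $[b']\in \BGmub$; basicness of $[b]$ then pins down $[b_M]_M$.

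The harder task is the Newton inequality $\nu_{b'_M}\leq \nu_{b_M}+((-w_{0,M}w_0\mu)^{\diamond})_{M\text{-dom}}$ in the $M$-partial order. First I would simplify the shift: quasi-splitness of $G$ makes $\Gamma$ commute with both $w_0$ and $w_{0,M}$, so the bracketed term is $M$-antidominant and $\Gamma$-invariant, and applying $w_{0,M}$ collapses it to $-w_0\mu^{\diamond}=(-\mu^{\diamond})_{\dom}$. The key claim, which I see as the main obstacle, is that in fact $\nu_{b_M}=\nu_b$. To prove this I would use the identification $\pi_1(M)_{\Gamma,\Q}\cong X_*(Z_M)^{\Gamma}_{\Q}\subset X_*(A)_{\Q}$, under which $\av_M$ is $W_M$-averaging, to write $\nu_{b_M}=\av_M(\nu_{b'_M})+\av_M(w_{0,M}w_0\mu)$. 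Setting $c:=\nu_b+(-\mu^{\diamond})_{\dom}-\nu_{b'}$ and substituting $\nu_{b'_M}=\nu_{b'}=\nu_b+(-\mu^{\diamond})_{\dom}-c$, the hypothesis $c\in \langle \Phi_{0,M}^{\vee}\rangle_{\Q}$ forces $\av_M(c)=0$; the residual $\av_{W_M}(-w_0\mu^{\diamond})$ then cancels against $\av_{W_M}(w_{0,M}w_0\mu^{\diamond})=\av_{W_M}(w_0\mu^{\diamond})$, leaving $\nu_{b_M}=\nu_b$. This cancellation is precisely what makes the coset $w_{0,M}w_0\mu$, rather than $\mu$ itself, the correct choice in the definition of $B(M,w_{0,M}w_0\mu,b_M)$.

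Once $\nu_{b_M}=\nu_b$ is in hand, the inequality reduces to showing $c\geq 0$ in the $M$-partial order. To finish I would argue: $c$ is a non-negative combination of positive relative $G$-coroots $\alpha^{\vee}$ (from $[b']\in \BGmub$) and lies in $\langle \Phi_{0,M}^{\vee}\rangle_{\Q}$ by hypothesis, so pairing with any fundamental weight $\omega_{\beta}$ for $\beta\in \Delta_0\setminus \Delta_{0,M}$ returns $0$; since every term $c_{\alpha}\langle \omega_{\beta},\alpha^{\vee}\rangle$ in this expansion is non-negative, each coefficient $c_{\alpha}$ with $\alpha\notin \Phi_{0,M}^+$ must vanish, yielding the claim. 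For the ``in particular'' clause, Hodge-Newton decomposability supplies the coroot-space condition by definition, while any representative of $[b']$ inside the centralizer of $\nu_{b'}$ (contained in $M$ by assumption) is a reduction $b'_M$ whose $M$-dominant Newton point is $\nu_{b'}$ (automatic since $\nu_{b'}$ is already $G$-dominant and $M$ is standard), so the lemma applies as stated.
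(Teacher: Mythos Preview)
Your proof is correct and follows essentially the same route as the paper: define $[b_M]_M$ as the basic class in $B(M)$ with $\kappa_M(b_M)=\kappa_M(b'_M)+(w_{0,M}w_0\mu)^{\sharp_M}$, check that it maps to $[b]$ in $B(G)$, and verify the Newton inequality by showing that $c=\nu_b+(-\mu^{\diamond})_{\dom}-\nu_{b'}$, which is $\geq_G 0$ and lies in the $M$-coroot lattice, is in fact $\geq_M 0$. The paper carries out the same computation in $\pi_1(M)_{\Gamma,\Q}$ (invoking torsion-freeness of $\ker(\pi_1(M)_{\Gamma}\to\pi_1(G)_{\Gamma})$), while you phrase it via $\av_M$; these are the same identification.

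One point of presentation: the sentence ``basicness of $[b]$ then pins down $[b_M]_M$'' is premature. Matching $\kappa_G$ alone does not force the image of a basic $M$-class to be basic in $G$; you need the fact $\nu_{b_M}=\nu_b$ (central), which you only establish in the next paragraph, to conclude that the image of $[b_M]_M$ in $B(G)$ is $[b]$. Reorder so that $\nu_{b_M}=\nu_b$ is proved first, then the identification of the image with $[b]$ follows from injectivity of $(\kappa_G,\nu)$.
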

\begin{proof}
Let $[\tilde b]_M\in B(M)$ be basic with $\kappa_M(\tilde b)=\kappa_M(b'_M)-(\mu^{-1})_{\dom}^{\sharp_M}$. Since $\kappa_G(b'_M)=\kappa_G(b')=\kappa_G(b)-\mu^{\sharp}$, we have $\kappa_G(\tilde b)=\kappa_G(b)\in\pi_1(G)_{\Gamma}$. From $\nu_{b'_M}^{\sharp_M}=(\nu_b+(\mu^{-1})_{\dom})^{\sharp_M}$ we obtain $\nu_b^{\sharp_M}=\nu_{b'_M}^{\sharp_M}-(\mu^{-1})_{\dom}^{\sharp_M}=\nu_{\tilde b}^{\sharp_M}$ in $\pi_1(M)_{\Gamma,\Q}$. Since the kernel of $\pi_1(M)_{\Gamma}\rightarrow \pi_1(G)_{\Gamma}$ is torsion free, this implies $\kappa_M(b)=\kappa_M(\tilde b)\in \pi_1(M)_{\Gamma}$. Hence the two basic classes agree and $[\tilde b]_M$ is a reduction of $[b]$ to $M$.

It remains to prove that  $[b'_M]_M\in B(M,w_{0,M}w_0\mu,b_M).$ By the previous step we have $\kappa_M(b'_M)=\kappa_M(b_M)+(-\mu)_{\dom}^{\sharp_M}$. By assumption $\nu_{b'_M}=\nu_{b'}\leq_G \nu_b(\mu^{-1,\diamond})_{\dom}$ and both sides have the same image in $\pi_1(M)_{\Gamma,\Q}$. Hence, their difference is a non-negative linear combination of positive coroots of $M$, and $\nu_{b'_M}\leq_M \nu_b(\mu^{-1,\diamond})_{\dom}$.

The second assertion follows since every $[b']\in B(G)$ has a reduction to the centralizer of its Newton point.
\end{proof}

\begin{prop}[Hodge-Newton decomposition for modifications of $G$-bundles]\label{prophonedec}
Assume that $G$ is quasi-split. Let $\mu\in X_*(T)_{\dom}$ and let $b\in G(\breve{\Qp})$  be basic. Let $M$ be a standard Levi subgroup of $G$ such that there are reductions $[b'_M]_M$ and $[b_M]_M$ of $[b']$ and $[b]$ to $M$ whose $M$-dominant Newton points coincide with the $G$-dominant Newton points $\nu_{b'}$ resp.~$\nu_{b}$. Assume that $[b'_M]_M\in B(M,w_{0,M}w_0\mu,b_M)$. 
Let $M^*={}^{w_0}M$, $b'_{M^*}={}^{w_0}b'_M$, and $b_{M^*}={}^{w_0}b_M$ be the conjugates under $w_0$. Then the map $$G_b(F)\times\Gr_{M^*,\mu,b_{M^*}}^{[b'_{M^*}]_{M^*}}(C)\rightarrow\Gr_{G,\mu,b}^{[b']}(C)$$ induced by the inclusion $M^*\hookrightarrow G$ and the $G_b(F)$-action on $\Gr_{G,\mu,b}^{[b']}$ is surjective.
\end{prop}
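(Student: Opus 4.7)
The plan is to produce compatible split $P^*$-reductions of both $\E_b$ and $\E_{b,x}\cong\E_{b'}$ to $P^*={}^{w_0}P$, where $P$ is the standard parabolic with Levi $M$, and then apply Lemma \ref{lemkey} to conclude that the modification $x$ descends to an $M^*$-modification. The reductions come from: on the source, the tautological split reduction $\E_b^{P^*}:=\E_{b_{M^*}}^{M^*}\times^{M^*}P^*$ using $b_{M^*}\in M^*(\breve{\Qp})$; on the target, the extension of the canonical Harder-Narasimhan reduction of $\E_{b'}$. The hypothesis that the $M$-dominant Newton of $[b'_M]_M$ equals $\nu_{b'}$ forces the centralizer $L$ of $\nu_{b'}$ to lie in $M$, so $L^*\subseteq M^*$, and extending the canonical reduction to $L^*$ outward yields a split reduction $\E_{b'}^{P^*}$ with $M^*$-part $\E_{b'_{M^*}}^{M^*}$. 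By \eqref{rem24}, $\E_{b'}^{P^*}$ corresponds via the modification $x$ to some reduction $\tilde\E_b^{P^*}$ of $\E_b$ to $P^*$.

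The crux is to show $\tilde\E_b^{P^*}$ is split with $M^*$-part isomorphic to $\E_{b_{M^*}}^{M^*}$. By Proposition \ref{lemlamlem}, it suffices to identify the underlying $M^*$-bundle. Applying Lemma \ref{lemkey} to the inverse modification $y$ (representable in $P^*(B_{\dR})$ by Iwasawa) gives $\tilde\E_b^{P^*}\times^{P^*}M^*\cong(\E_{b'_{M^*}}^{M^*})_{\pr_{M^*}(y)}$, and Lemma \ref{lemslopekappa}(1) computes its Kottwitz class in $\pi_1(M^*)_\Gamma$. The hypothesis $[b'_M]_M\in B(M,w_{0,M}w_0\mu,b_M)$ translates via $w_0$-conjugation (using $w_{0,M^*}\mu\equiv\mu$ in $\pi_1(M^*)$) into the relation $\kappa_{M^*}(b'_{M^*})=\kappa_{M^*}(b_{M^*})-\mu^{\sharp_{M^*}}$. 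Combined with the fact that the inverse modification $y$ contributes Kottwitz class $-\mu^{\sharp}$ in $\pi_1(G)_\Gamma$, the computation yields Kottwitz class equal to $\kappa_{M^*}(b_{M^*})$. The $M^*$-Newton is central in $M^*$ since its $G$-pushout is $\E_b$ with central Newton $\nu_b$, and these invariants together pin down the $M^*$-class as $[b_{M^*}]_{M^*}$.

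By Proposition \ref{lemlamlem}(3), there is an automorphism of $\E_b$ identifying $\tilde\E_b^{P^*}$ with $\E_b^{P^*}$. After applying this automorphism, the modification $x$ is a $P^*$-modification between split $P^*$-bundles, and as in the proof of Corollary \ref{lemadomspl} it descends to an $M^*$-modification. Concretely, in the Levi decomposition $x_0=nm\in P^*(B_{\dR})$ of a representative, the compatibility with the split reductions forces $n\in N^*(B_{\dR}^+)$, so $z:=[m]\in\Gr_{M^*}(C)$ maps to $x$ under $\Gr_{M^*}\hookrightarrow\Gr_G$. The $\mu$-type of $z$ follows from that of $x$, and $z\in\Gr_{M^*,\mu,b_{M^*}}^{[b'_{M^*}]_{M^*}}(C)$ because the $M^*$-modification $z$ takes $\E_{b_{M^*}}^{M^*}$ to $\E_{b'_{M^*}}^{M^*}$ by construction.

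The main obstacle is the Kottwitz computation in the second paragraph: identifying the transferred $M^*$-bundle requires precise tracking of signs through Lemma \ref{lemslopekappa}(1) and careful translation of the hypothesis via $w_0$-conjugation between $M$ and $M^*$.
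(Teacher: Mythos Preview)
Your overall strategy---transfer the split $P^*$-reduction of $\E_{b'}$ back to a reduction of $\E_b$, show it too is split with $M^*$-part $\E_{b_{M^*}}^{M^*}$, then descend the modification to $M^*$---matches the paper's. But two of the three steps have genuine gaps.

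\textbf{Identifying the transferred $M^*$-bundle.} You write that Lemma \ref{lemslopekappa}(1) computes the Kottwitz class of $\tilde\E_b^{P^*}\times^{P^*}M^*$ in $\pi_1(M^*)_\Gamma$, and that ``combined with the fact that the inverse modification $y$ contributes Kottwitz class $-\mu^{\sharp}$ in $\pi_1(G)_\Gamma$'' this yields $\kappa_{M^*}(b_{M^*})$. But Lemma \ref{lemslopekappa}(1) involves $\kappa_{M^*}(\pr_{M^*}(y))\in\pi_1(M^*)_\Gamma$, and knowing only its image in $\pi_1(G)_\Gamma$ does not determine it. Likewise, your claim that ``the $M^*$-Newton is central in $M^*$ since its $G$-pushout is $\E_b$'' is false in general: a semi-stable $G$-bundle admits parabolic reductions whose Levi part is not semi-stable (e.g.\ $0\to\O(-1)\to\O^3\to\O(1)\oplus\O\to 0$ for $\GL_3$). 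The paper closes this gap with a second inequality: writing $x\in S_\lambda(C)$, the Cartan--Iwasawa comparison gives $-\lambda\leq\mu$, while semi-stability of $\E_b$ gives $v\leq-\nu_b$ for the slope vector $v$ of the transferred reduction. The hypothesis yields the single relation $(v-\lambda)^{\sharp_{M^*}}=(-\nu_b+\mu)^{\sharp_{M^*}}$, and the two inequalities force it to split into $v^{\sharp_{M^*}}=-\nu_b^{\sharp_{M^*}}$ and $\lambda^{\sharp_{M^*}}=-\mu^{\sharp_{M^*}}$. The first equality is what makes the reduction split; you have no substitute for this argument.

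\textbf{Descent of the modification to $M^*$.} Even granting that both $P^*$-reductions are split, your claim that ``compatibility with the split reductions forces $n\in N^*(B_{\dR}^+)$'' is unjustified: knowing that $(\E_b^{P^*})_{x_0}$ and $(\E_b^{P^*})_{m}$ are isomorphic as $P^*$-bundles does not force $x_0=m$ in $\Gr_G(C)$. The proof of Corollary \ref{lemadomspl} that you invoke relies on an anti-dominant $P$-regular slope vector (so that the $M$-sub-bundle is pinned down by the canonical reduction) and in any case only concludes $x\in P(\Qp)M(B_{\dR})G(B_{\dR}^+)$, which is weaker than landing in the image of $\Gr_{M^*}$. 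The paper instead feeds the second equality $\lambda^{\sharp_{M^*}}=-\mu^{\sharp_{M^*}}$ into Lemma \ref{lemkhn}, a nontrivial group-theoretic result of Kottwitz on the interaction of the Iwasawa and Cartan decompositions, which directly gives $x\in M^*(B_{\dR}^+)\,w_0(\mu)^{-1}(\xi)\,G(B_{\dR}^+)$ and hence a preimage in $\Gr_{M^*,\mu}$. This lemma (or an equivalent) is the missing ingredient in your final paragraph.
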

Notice that the assumptions of this proposition are slightly weaker than requiring that $[b']$ has to be Hodge-Newton decomposable (or Hodge-Newton decomposable for the Levi subgroup $M$) since we replace the assumption that $M$ contains the centralizer of $\nu_{b'}$ by the assumption on existence of a reduction of $[b']$ to $M$.

For the proof we need the following general group-theoretic lemma, which is also part of Kottwitz's proof of the classical Hodge-Newton decomposition for unramified groups in \cite{KHN}. We will apply it to $L=B_{\dR}(F)$.
\begin{lemma}\label{lemkhn}
Let $G$ be an unramified reductive group over a complete discretely valued field $L$ with valuation ring $\O_L$ and uniformizer $t$. Let $T$ be an unramified maximal torus and let $B$ be a Borel subgroup of $G$ containing the centralizer of $T$. Let $\mu\in X_*(T)_{\dom}$. Let $U$ be the unipotent radical of $B$. Let $K$ be a hyperspecial maximal subgroup of $G$. Let $P$ be a standard parabolic subgroup of $G$ with Levi factor $M$. Let $U_M$ and $K_M$ be the induced subgroups of $M$.

Let $\lambda\in X_*(T)$ with $\lambda^{\sharp_M}=w_0(\mu)^{\sharp_M}\in \pi_1(M)_{\Gamma}$. Then for every unramified extension $L'$ of $L$, we have $$ U(L')\lambda(t)K\cap K\mu(t)K\subseteq K_Mw_0(\mu)(t)K.$$
\end{lemma}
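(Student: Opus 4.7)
The plan is to establish the lemma via Mirkovic--Vilonen theory applied to the parabolic retraction $r_P\colon \Gr_G \to \Gr_M$ along the unipotent radical $N$ of $P$.

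\textbf{Step 1 (parabolic Iwasawa decomposition).} Let $g\in U(L)\lambda(t)K\cap K\mu(t)K$ and pick a presentation $g=u\lambda(t)k_0$ with $u\in U(L)$, $k_0\in K$. Factor $u=u_M n_0$ along the direct product $U=U_M\cdot N$ with $U_M=U\cap M$, and use that $T$ normalises $N$ to rewrite
\[
g=u_M\lambda(t)\cdot\tilde n\cdot k_0=m\cdot\tilde n\cdot k_0,
\]
with $m:=u_M\lambda(t)\in M(L)$ and $\tilde n:=\lambda(t)^{-1}n_0\lambda(t)\in N(L)$. This is the decomposition of $g$ along $G(L)=M(L)N(L)K$.

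\textbf{Step 2 (identifying the $M$-part).} The retraction $r_P\colon \Gr_G\to\Gr_M$, well defined by $g'K\mapsto m'K_M$ because $P(L)\cap K=M(\O_L)N(\O_L)$ projects onto $K_M$, sends our $gK$ to $mK_M=u_M\lambda(t)K_M$, which lies in the connected component of $\Gr_M$ labelled by $\lambda^{\sharp_M}\in\pi_1(M)_\Gamma$. The Mirkovic--Vilonen description of $r_P$ restricted to $K\mu(t)K/K\subset\Gr_G$ shows its image is supported on $\bigsqcup_{\nu}K_M\nu(t)K_M/K_M$, with $\nu$ running over $M$-dominant elements of the Weyl orbit $W\mu$. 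The hypothesis $\lambda^{\sharp_M}=w_0(\mu)^{\sharp_M}$ picks out a unique such $\nu$, namely $\nu=w_{0,M}w_0(\mu)$, so $m\in K_Mw_{0,M}w_0(\mu)(t)K_M=K_M w_0(\mu)(t)K_M$ (the last equality because $K_M$ contains lifts of $W_M$).

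\textbf{Step 3 (integrality of the $N$-component).} To conclude $g\in K_M w_0(\mu)(t)K$ it remains to show $\tilde n\in N(\O_L)$, equivalently that $gK$ lies in the image of the injective embedding $\Gr_M\hookrightarrow\Gr_G$. For this, compare MV-cycle dimensions: writing $\lambda=w_0(\mu)+\beta$ with $\beta\in\Z\Phi_M^\vee$, the classical formula gives $\dim(S_\lambda^G\cap \Gr_{G,\mu})=\langle\rho_G,\lambda+\mu\rangle=\langle\rho_G,\beta\rangle$, and by the $W_M$-invariance of $\rho_{G/M}$ one has $\langle\rho_G,\beta\rangle=\langle\rho_M,\beta\rangle$, which is the dimension of the $M$-MV cycle $S_\lambda^M\cap \Gr_{M,w_{0,M}w_0(\mu)}$. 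Both cycles are irreducible, and $\Gr_M\hookrightarrow \Gr_G$ carries the $M$-cycle into the $G$-cycle; the dimension match forces this to be an equality of sets. Thus every $gK\in S_\lambda^G\cap \Gr_{G,\mu}$ lies in the image of $\Gr_M$, so $g\in M(L)K$, and using the elementary identity $M(L)\cap N(L)K=K_M$ (verified via the Iwahori decomposition relative to $B$) this forces $\tilde n\in N(\O_L)$. Combined with Step 2 this yields the desired inclusion.

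\textbf{Main obstacle.} The crux is Step 3: the MV dimension-matching argument is where the hypothesis $\lambda^{\sharp_M}=w_0(\mu)^{\sharp_M}$ enters essentially, through the reduction to $\beta\in\Z\Phi_M^\vee$ and the orthogonality $\langle\rho_{G/M},\Z\Phi_M^\vee\rangle=0$; without it the $G$- and $M$-MV cycles would have different dimensions, giving only a weaker containment. One must also verify that the Mirkovic--Vilonen description of $r_P$ applies in the present unramified, hyperspecial setting, but this is essentially formal once $K$ is fixed.
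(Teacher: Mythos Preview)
Your overall strategy---reduce to showing the $N$-component is integral via an equality of $G$- and $M$-MV cycles forced by a dimension count---is in the right circle of ideas, and is close in spirit to Kottwitz's argument in \cite{KHN}, which is what the paper invokes. But two of your key steps are not justified as written.

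\textbf{Step 2 is based on a false claim.} The image of $K\mu(t)K/K$ under the retraction $r_P$ is \emph{not} supported only on the Schubert cells $\Gr_{M,\nu}$ for $\nu$ in the Weyl orbit $W\mu$: in general $r_P(\Gr_{G,\mu})$ meets $\Gr_{M,\nu}$ for every $M$-dominant weight $\nu$ of $V_\mu$, including non-extremal ones. (Already for $G=\GL_2$, $M=T$, $\mu=(1,-1)$, the retraction hits $\nu=(0,0)\notin W\mu$.) So your deduction that $m\in K_M w_{0,M}w_0(\mu)(t)K_M$ does not follow from the support statement you wrote down. What you actually need is that the \emph{only} $M$-dominant weight of $V_\mu$ in the $\pi_1(M)$-component of $w_0(\mu)$ is $w_{0,M}w_0(\mu)$; equivalently, that the $M^\vee$-isotypic piece of $V_\mu|_{M^\vee}$ in that component is the irreducible $V^{M^\vee}_{w_{0,M}w_0(\mu)}$ with multiplicity one. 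This is a genuine representation-theoretic input and is essentially part of what Kottwitz proves.

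\textbf{Step 3's irreducibility is unjustified.} The intersection $S_\lambda^G\cap\Gr_{G,\mu}$ is equidimensional of dimension $\langle\rho_G,\lambda+\mu\rangle$, but it is irreducible only when the weight multiplicity $\dim V_\mu(\lambda)$ equals $1$; in general it has that many top-dimensional components. Your hypothesis $\lambda-w_0(\mu)\in\Z\Phi_M^\vee$ does not by itself force multiplicity one. Without irreducibility, knowing that the closed subset $\iota(S_\lambda^M\cap\Gr_{M,\nu})$ sits inside $S_\lambda^G\cap\Gr_{G,\mu}$ with the same dimension does not give equality of the two sets. One can repair this by proving that the number of components also matches, i.e.\ that $\dim V_\mu^G(\lambda)=\dim V^{M}_{w_{0,M}w_0(\mu)}(\lambda)$ for every such $\lambda$---but this is again precisely the branching statement above, and you have not established it. (You also have not checked that the $M$-cycle $S_\lambda^M\cap\Gr_{M,\nu}$ is non-empty, which amounts to $\lambda$ being a weight of $V^{M}_{\nu}$; this too is part of the same statement.)

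In short, both gaps reduce to the same missing lemma: the weights of $V_\mu$ in the $\pi_1(M)$-class of $w_0(\mu)$, with multiplicities, coincide with those of the irreducible $M^\vee$-module of highest weight $w_{0,M}w_0(\mu)$. The paper bypasses this by citing \cite[Lemma~4.2 and Lemma~2.2]{KHN}, where Kottwitz gives a direct combinatorial argument (first upgrading the congruence in $\pi_1(M)_\Gamma$ to one in $\pi_1(M)$, then deducing the containment). Your MV-cycle formulation is a reasonable alternative viewpoint, but to make it rigorous you must supply exactly this branching input, at which point you are essentially reproving Kottwitz's lemmas.
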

\begin{proof} The left hand side is empty unless $\lambda_{\dom}\leq \mu$, so we assume this. By \cite[Lemma 4.2]{KHN}, this condition together with $\lambda^{\sharp_M}=w_0(\mu)^{\sharp_M}$  implies that $\lambda$ and $\mu$ have the same image in $\pi_1(M)$ (without taking Galois-coinvariants). Then by \cite[Lemma 2.2]{KHN} and its proof (which is still valid for unramified groups although the lemma is only stated for split groups), the above assertion follows.
\end{proof}

\begin{proof}[Proof of Proposition \ref{prophonedec}]
Let $P^*$ be the standard parabolic subgroup of $G$ with Levi factor $M^*$. Let $x\in \Gr_{G,\mu,b}^{[b']}(C).$ Let $(\E_{b,x})_{P^*}=\E_{b'_{M^*}}\times^{M^*} {P^*}$ be the reduction to ${P^*}$ induced by the reduction $b'_{M^*}$ of $[b']$ to ${M^*}$. Then by \eqref{rem24}, this induces a reduction $(\E_b)_{P^*}$ of $\E_b$ to $P^*$. Let $v$ be the associated slope vector. Then since $\E_b$ is semi-stable $v\leq -\nu_{b}$.

Let $\lambda\in X_*(T)$ with $x\in S_{\lambda}(C)$. Non-emptiness of $S_{\lambda}\cap \Gr_{G,\mu}$ implies by Remark \ref{propmvne} (3) that $w_0(\lambda)\leq w_0(-\mu)$, hence $-\lambda\leq\mu$. By Lemma \ref{lemkey}, $(\E_{b,x})_{P^*}\times^{P^*}M^*\cong ((\E_b)_{P^*}\times^{P^*}{M^*})_{y}$ for $y=\pr_{M^*}(x)\in S_{\lambda}^{M^*}(C)$.

Since $\nu_{b'_{M}}^{\sharp_{M}}=\nu_b^{\sharp_{M}}(\mu^{-1})_{\dom}^{\sharp_{M}}$, we obtain by $w_0$-conjugation and multiplication by $-1$ that $$-\nu_b^{\sharp_{M^*}}+\mu^{\sharp_{M^*}}=(w_0(-\nu_{b'_M}))^{\sharp_{M^*}}=c_1(\E^{M^*}_{b'_{M^*}})=c_1((\E_{b})_{P^*}\times^{P^*}M^*)-\lambda^{\sharp_{M^*}}=(v-\lambda)^{\sharp_{M^*}}.$$ Since $v\leq -\nu_b$ and $\lambda\leq -\mu$, this implies $v^{\sharp_{M^*}}=-\nu_b^{\sharp_{M^*}}$ and $\lambda^{\sharp_{M^*}}=-\mu^{\sharp_{M^*}}$.

The first of these equalities means that $(\E_b)_{P^*}$ is split, hence $(\E_b)_{P^*}\times^{P^*}M^*$ is isomorphic to $\E_{w_0(b_M)}$. By Proposition \ref{lemlamlem}(3) there is an automorphism of $\E_b$ (i.e., an element of $G_b(F)$) identifying $(\E_b)_{P^*}$ with the reduction $\E_{w_0(b_M)}^{P^*}$ of $\E_b$. Thus changing $x$ within its $G_b(F)$-orbit, we may assume that the two reductions are equal. Since $\lambda^{\sharp_{M^*}}=-\mu^{\sharp_{M^*}}$ (and $-\mu$ is anti-dominant), we can apply Lemma \ref{lemkhn} (using an isomorphism $B_{\dR}^+(C)\cong C[\![ \xi]\!]$), which then implies that $x$ is in the image of $\Gr_{M^*,\mu,b_{M^*}}^{[b'_{M^*}]_{M^*}}(C)$.
\end{proof}

\begin{proof}[Proof of Theorem \ref{thmmain2}, (1)$\Rightarrow$(2)]
Let $[b']\in \BGmub$ be Hodge-Newton decomposable, and let $x\in \Gr_{G,\mu,b}^{[b']}(C)$. We want to show that $x$ is not weakly admissible. As usual, we may assume that $G$ is adjoint and quasi-split. Let $M\subset G$ be as in the definition of Hodge-Newton decomposability. Let $b_M$ be a reduction of $b$ to $M$ as in Lemma \ref{lemcondhonedec}. We use the notation of Proposition \ref{prophonedec}. Let $(j,x')\in G_b(F)\times \Gr_{M^*,\mu,w_0(b_M)}^{[b'_{M^*}]_{M^*}}(C)$ map to $x$ under the surjection of Proposition \ref{prophonedec}. Let $(\E_{b,x})_{P^*}$ be the reduction of $\E_{b,x}$ corresponding to the reduction $\E_{b_{M^*}}^{M^*}\times^{M^*} P^*$ of $\E_b$, composed with the automorphism $j$ of $\E_b$. By Lemma \ref{lemkey} we have $(\E_{b,x})_{P^*}\times^{P^*} M^*\cong \E_{b_{M^*},x'}^{M^*}$. In particular, the slope vector of this reduction is $\av_{M^*}(-\nu_{b}+\mu)\nleq \av_G(-\nu_b+\mu)$. Thus the modification is not weakly admissible.
\end{proof}

\subsection{Newton strata in the weakly admissible locus}\label{secconc}
From the comparisons of Newton strata in Section \ref{secadinn} together with Lemma \ref{corwatoqsp} we obtain
\begin{cor}\label{lemma}
Let $\{\mu\}$ be a conjugacy class of cocharacters of $G$ and let $b\in G(\breve{\Qp})$. Let $[b']\in B(G,\mu,b)$.
\begin{enumerate}
\item Then $\Gr_{G,\mu,b}^{\wa}\cap \Gr_{G,\mu,b}^{[b']}\neq \emptyset$ if and only if for the corresponding images in $G_{\ad}$ we have $\Gr_{G_{\ad},\mu_{\ad},b_{\ad}}^{\wa}\cap \Gr_{G_{\ad},\mu_{\ad},b_{\ad}}^{[b'_{\ad}]}\neq \emptyset$.
\item Let $b_0\in G(\breve{\Qp})$ be basic. Then  $\Gr_{G,\mu,b}^{\wa}\cap \Gr_{G,\mu,b}^{[b']}\neq \emptyset$ if and only if $\Gr_{G_{b_0},\mu,bb_0^{-1}}^{\wa}\cap \Gr_{G_{b_0},\mu,bb_0^{-1}}^{[b'b_0^{-1}]}\neq \emptyset$.
\end{enumerate}
\end{cor}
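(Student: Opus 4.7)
Both parts of the corollary should follow by combining the compatibilities of the Newton stratification established in Section \ref{secadinn} with those of the weakly admissible locus in Lemma \ref{corwatoqsp}. The plan is to treat the two statements in parallel: each concerns a canonical morphism of $B_{\dR}^+$-Grassmannians whose restriction to the affine Schubert cell already respects both the decomposition into Newton strata and the subset of weakly admissible points, so that non-emptiness of the intersection transfers under this morphism.

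For part (2), I would invoke the isomorphism $\Gr_{G,C}\xrightarrow{\sim}\Gr_{G_{b_0},C}$ supplied by Section \ref{remcompgj}, which by construction carries the Schubert cell $\Gr_{G,\mu}$ onto $\Gr_{G_{b_0},\mu}$ and the Newton stratum $\Gr_{G,\mu,b}^{[b']}$ onto $\Gr_{G_{b_0},\mu,bb_0^{-1}}^{[b'b_0^{-1}]}$. By Lemma \ref{corwatoqsp}(2), the same isomorphism carries the weakly admissible locus $\Gr_{G,\mu,b}^{\wa}$ onto $\Gr_{G_{b_0},\mu,bb_0^{-1}}^{\wa}$. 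Intersecting the two identifications yields the equivalence on the nose, with no non-emptiness question to check.

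For part (1), the projection $\pr:\Gr_{G,\mu}\to\Gr_{G_{\ad},\mu_{\ad}}$ sends the Newton stratum $\Gr_{G,\mu,b}^{[b']}$ into $\Gr_{G_{\ad},\mu_{\ad},b_{\ad}}^{[b'_{\ad}]}$ (Section \ref{secadinn}), and the injectivity of $\BGmub\to B(G_{\ad},\mu_{\ad},b_{\ad})$ recorded there shows that, on $C$-points, the preimage of $\Gr_{G_{\ad},\mu_{\ad},b_{\ad}}^{[b'_{\ad}]}$ in $\Gr_{G,\mu}$ is exactly $\Gr_{G,\mu,b}^{[b']}$: any other $[b'']\in \BGmub$ lying above $[b'_{\ad}]$ would violate injectivity. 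Similarly, Lemma \ref{corwatoqsp}(1) gives $\pr^{-1}(\Gr_{G_{\ad},\mu_{\ad},b_{\ad}}^{\wa})\cap \Gr_{G,\mu}=\Gr_{G,\mu,b}^{\wa}$ on $C$-points. Therefore $\Gr_{G,\mu,b}^{\wa}\cap\Gr_{G,\mu,b}^{[b']}$ is precisely the preimage under $\pr$ of the corresponding intersection on the adjoint side, and non-emptiness of one is equivalent to non-emptiness of the other as soon as $\pr$ is surjective on the relevant $C$-points.

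The only step beyond routine bookkeeping is this surjectivity, which I would reduce to the surjectivity of $G(B_{\dR}^+)\to G_{\ad}(B_{\dR}^+)$. Since $B_{\dR}^+$ is a complete local ring with algebraically closed residue field $C$ and the kernel $Z\subset G$ is a group of multiplicative type, one has $H^1_{\text{\'et}}(\Spec B_{\dR}^+,Z)=0$, hence surjectivity. Combined with the fact that $\pr_*\mu=\mu_{\ad}$, this gives surjectivity $\Gr_{G,\mu}(C)\to\Gr_{G_{\ad},\mu_{\ad}}(C)$, which is the only technical input needed to close the argument. All other ingredients are formal consequences of the compatibilities already set up.
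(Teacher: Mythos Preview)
Your proposal is correct and follows the same approach as the paper, which simply states that the corollary follows from the comparisons of Newton strata in Section~\ref{secadinn} together with Lemma~\ref{corwatoqsp}. You have the merit of spelling out the one nontrivial detail the paper leaves implicit, namely the surjectivity of $\Gr_{G,\mu}(C)\to\Gr_{G_{\ad},\mu_{\ad}}(C)$, which you correctly deduce from surjectivity of $G(B_{\dR}^+)\to G_{\ad}(B_{\dR}^+)$ (using that $B_{\dR}^+$ is a complete local ring with algebraically closed residue field) together with the description of the Schubert cell as a $G(B_{\dR}^+)$-orbit.
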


 The main step in the proof of the second implication in Theorem \ref{thmmain2} is concerned with the $\sigma$-conjugacy class considered in Corollary \ref{cormaxhnind}.

\begin{thm}\label{thmmaxi}
Let $[b']\in \BGmub$ be the unique maximal Hodge-Newton indecomposable element. Then $\Gr_{G,\mu,b}^{\wa}\cap \Gr_{G,\mu,b}^{[b']}\neq \emptyset$.
\end{thm}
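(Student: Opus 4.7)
My approach combines Theorem \ref{propkey}, which identifies the generic Newton class on the semi-infinite cell $S_\lambda$, with the cellular description of the weakly admissible complement from Lemma \ref{lemcompwa} and a dimension estimate of the same flavour as in the proof of Theorem \ref{propkey}.

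Using Corollary \ref{lemma} together with Remark \ref{rem73}, I first reduce to the case that $G$ is quasi-split adjoint, retaining an arbitrary basic $b$. I then choose $\lambda \in X_*(T)$ minimal such that the generic point of $S_\lambda \cap \Gr_{G,\mu}$, viewed via modifications of $\E_b$, has Newton class $[b']$. The proof of Theorem \ref{propkey} extends from $b=1$ to arbitrary basic $b$ essentially verbatim, with the trivial reduction $\E_1^P$ replaced by the split reduction $\E_{b_M}^M \times^M P$ induced by a superbasic reduction $b_M$ of $b$ to a standard Levi $M$; this yields, for sufficiently regular dominant $\eta$, a dense open subset of $S_{\lambda,\eta} \cap \Gr_{G,\mu}$ contained in $\Gr_{G,\mu,b}^{[b']}$, so it suffices to exhibit one weakly admissible point there. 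By Lemma \ref{lemcompwa}, weak admissibility of $x \in S_\lambda \cap \Gr_{G,\mu}$ amounts to $\av_M(-\lambda_j) \leq \av_G(-\lambda_j)$ in $X_*(A)_\Q$ for every $j \in G_b(\Qp)$, where $\lambda_j$ is defined by $jx \in (gUg^{-1})(B_{\dR})\lambda_j(\xi)G(B_{\dR}^+)/G(B_{\dR}^+)$.

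For $j = 1$ I plan to deduce the required inequality by combining the minimality of $\lambda$ with Hodge-Newton indecomposability of $[b']$: via the reduced edge-subset description of \cite[Thm.~6.5]{Chai} and Lemma \ref{lemcharbl}, any violation along a simple relative root would either allow replacing $\lambda$ by a strictly smaller element with the same Newton class, contradicting minimality, or force $[b']$ to admit a reduction to a proper Levi witnessing its Hodge-Newton decomposability in the sense of Definition \ref{defHNdec}. For $j \ne 1$, for each fixed $\lambda'$ with $\av_M(-\lambda') \not\leq \av_G(-\lambda')$, the offending locus inside $S_{\lambda,\eta}$ has dimension strictly less than $\langle 2\rho, \eta\rangle$ by the dimension argument at the end of the proof of Theorem \ref{propkey}, via Corollary \ref{lemadomspl} and the closure relations of Remark \ref{propmvne}(1); since the full weakly admissible complement is a profinite union of such loci (as described in the paragraph preceding Lemma \ref{lem47}), its complement in $S_{\lambda,\eta}$ is non-empty and meets the generic Newton stratum from Theorem \ref{propkey}, producing the desired weakly admissible point in $\Gr_{G,\mu,b}^{[b']}$.

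The main obstacle I foresee is the $j = 1$ step: extracting the coroot-order inequality from the minimality of $\lambda$ together with the Hodge-Newton indecomposability of $[b']$ is a delicate combinatorial assertion on reduced edge subsets, and it is precisely here that the maximality of $[b']$ among Hodge-Newton indecomposable elements of $\BGmub$ is essential.
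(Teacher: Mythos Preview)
Your proposal has a genuine gap in its use of Theorem \ref{propkey}. That theorem asserts density of $S_{\lambda,\eta}^{[b(\lambda)]}$ inside $S_{\lambda,\eta}$ for sufficiently regular $\eta$; but $S_{\lambda,\eta}$ has dimension $\langle 2\rho,\eta\rangle$, growing with $\eta$, while $S_\lambda\cap\Gr_{G,\mu}$ for the fixed $\mu$ is a locally closed subspace of much smaller dimension (at most $\langle 2\rho,\mu\rangle$). Density of a Newton stratum in $S_{\lambda,\eta}$ says nothing about its density, or even its nonemptiness, in the low-dimensional slice $S_\lambda\cap\Gr_{G,\mu}$, so your first substantive step---producing a dense open subset of $\Gr_{G,\mu,b}^{[b']}$ inside a semi-infinite piece of $\Gr_{G,\mu}$---does not follow from the cited result. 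The dimension bound you invoke for the $j\neq 1$ loci has the same defect: the argument at the end of the proof of Theorem \ref{propkey} bounds $P(\Qp)M(B_{\dR})G(B_{\dR}^+)\cap S_{\lambda,\eta}$ for regular $\eta$, not intersections inside $\Gr_{G,\mu}$, and Corollary \ref{lemadomspl} only applies when the induced reduction is split with $P$-regular anti-dominant slope vector, which an arbitrary $j$-translate does not produce.

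The paper's argument is structurally different and does not attempt to identify a generic Newton point on any piece of $\Gr_{G,\mu}$. Its Step~1 shows, using the maximality of $[b']$, that every non-weakly-admissible $x\in\Gr_{G,\mu,b}^{[b']}$ lies in one of profinitely many loci $Z$ indexed by split reductions $(\E_{b,x})_P$ with non-basic dominant slope vector. Rather than bounding $\dim Z$ inside $\Gr_{G,\mu}$, it passes to the dual side via the period maps $\pi_{\dR}$ and $\pi_{{\rm HT}}$ out of $\mathcal M(G,\mu^{-1},b')_\infty$, and bounds $\dim\pi_{\dR}(\pi_{{\rm HT}}^{-1}(Z))$ strictly below $\langle 2\rho,\mu\rangle=\dim\Gr_{G,\mu^{-1},b'}^{[b]}$. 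The key input is the formula $\dim(S_{\lambda'}\cap\Gr_{G,\mu^{-1}})=\langle\rho,\lambda'+\mu\rangle$, obtained via the Bialynicki-Birula isomorphism and requiring $\mu$ minuscule; the strict inequality then follows from a coroot computation in which Hodge-Newton indecomposability is used once more. None of this architecture appears in your outline.
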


We first use this theorem to finish the proof of Theorem \ref{thmmain2}.
\begin{proof}[Proof of Theorem \ref{thmmain2}, (2)$\Rightarrow$(1)]
Assume that $[b_0]\in \BGmub$ is Hodge-Newton indecomposable and that $\Gr_{G,\mu,b}^{[b_0]}$ does not intersect the weakly admissible locus. Then $\Gr_{G,\mu,b}^{[b_0]}$ is contained in its closed complement, and thus the same holds for $\overline{\Gr_{G,\mu,b}^{[b_0]}}$ where we take the closure within the affine Schubert cell for $\mu$. However, $[b_0]\leq [b_{\max}]$ where $[b_{\max}]$ is the maximal Hodge-Newton indecomposable element. Hence by Corollary \ref{corclosns}, $\Gr_{G,\mu,b}^{[b_{\max}]}\subset  \overline{\Gr_{G,\mu,b}^{[b_0]}}$ is contained in the complement of $\Gr_{G,\mu,b}^{\wa}$, contradicting Theorem \ref{thmmaxi}.
\end{proof}

\begin{proof}[Proof of Theorem \ref{thmmaxi}]
By Corollary \ref{lemma} we may replace $G$ by the quasi-split inner form of its adjoint group, and also consider each simple factor separately. Thus from now on we assume that $G$ is adjoint, simple, and quasi-split. We subdivide the proof into several steps.\\

\noindent{\it Step 1.} Let $C$ be a complete algebraically closed field extension of ${\Qp}$. As first step we show that for every $x\in \Gr_{G,\mu,b}^{[b']}(C)$ that is not weakly admissible, there is a proper standard parabolic subgroup $P$ (depending on the point $x$), and a reduction $b_{M}$ of $b$ to its standard Levi subgroup $M$ such that for the associated reduction $(\E_{b,x})_{P}$ we have
\begin{enumerate}
\item $(\E_{b,x})_P$ is split,
\item the slope vector of $(\E_{b,x})_P$ is non-basic and dominant.
\end{enumerate}

\begin{remark}\label{remint}
In the above context let $[b'_M]\in B(M)$ be such that $(\E_{b,x})_P\times^P M\simeq \E^M_{b'_M}$. Then (1) is equivalent to $[b'_M]_G=[b']$. We do not require the ($M$-dominant) Newton point of $[b'_M]_M$ to be $G$-dominant, but just the slope vector as in (2), i.e.~$\av_M(-\nu_{b'_M})$, is $G$-dominant.
\end{remark}

Assume that $x\in \Gr_{G,\mu,b}^{[b']}(C)$ is not weakly admissible. Then there is a standard parabolic subgroup $P$ (depending on the point $x$) and a reduction $b_{M}$ of $b$ to its standard Levi subgroup $M$ such that the slope vector associated with the reduction of $\E_{b,x}$ corresponding to the reduction $\E_{b_M}^P$ of $\E_b$ satisfies $v_x\nleq \av_G(v_x)$. We let $P$ and $(\E_{b,x})_{P}$ be such that $v_x\in\mathcal N(G)$ is maximal (for the partial order $\leq$) among the possible elements for the given point $x$. Then $v_x$ is non-basic and the same argument as for the claim in the proof of Theorem \ref{propclpt} shows that $v_x$ is dominant.

Again, we write $M^*=w_0(M)$, and similarly for other Levi subgroups, and let $\nu_{x,P}=v_x^*=w_0(-v_x),$ an element which is central in $M^*$. Then also $\nu_{x,P}$ is $G$-dominant.

Replacing $P$ by a larger parabolic subgroup we may assume that $\langle \alpha, \nu_{x,P}\rangle>0$ for all positive $\alpha$ that are not in $M^*$. 

Let $HN(\E_{b,x,M})$ be the Harder-Narasimhan polygon of $\E_{b,x,M}=(\E_{b,x})_{P}\times^{P}M$. Let $\nu_{M}=w_0(-HN(\E_{b,x,M}))$ and $\nu_{M,G}=(\nu_{M})_{G-\dom}$. Then $\nu_{M,G}$ coincides with the Newton polygon of $(\E_{b,x,M})\times^{M}G$.

\noindent{\it Claim 1.}  $\nu_{b'}\leq\nu_{M,G}\leq \nu_b(\mu^{-1})_{\dom}^{\diamond}$.

The first inequality follows from \cite[Cor.~2.9]{Chen}. Using the Iwasawa decomposition we have a representative of $x$ of the form $mn$ for $m\in M(B_{\dR})$ and $n\in N(B_{\dR})$. Since $m$ is the retraction of $x$ to $M$, we have $m\in \Gr_{M,\mu_1}(C)$ for some $M$-dominant $\mu_1$ with $\mu_{1,\dom}\leq \mu$. By Lemma \ref{lemkey}, $\E_{b,x,M}=(\E_{b,x})_{P}\times^{P}M= \E_{b_{M},m}$.

Thus $\nu_{M}\leq_{M^*} \nu_b(w_0(\mu_1^{-1}))^{\diamond}$ where $\leq_{M^*}$ denotes the partial order associated with the group $M^*$. By \cite[Thm.~5.2 (1)]{minnp}, the $G$-dominant representatives of these two elements are then related by $\leq$. This proves Claim 1.

Hence $\E_{b,x,M}\times^{M}G\cong\E_{b''}$ for some $[b'']\in \BGmub$ with $\nu_{b''}=\nu_{M,G}$ and $[b']\leq[b'']$. By the maximality of $[b']$, either $[b']=[b'']$ or $[b'']$ is Hodge-Newton decomposable.

\noindent{\it Claim 2.} If $[b'']$ is Hodge-Newton decomposable, then also $(M^*,\nu_{M},\nu_b(w_0(\mu_1^{-1}))^{\diamond})$ is Hodge-Newton decomposable.

We write $\zeta_1=\nu_b(w_0(\mu_1^{-1}))^{\diamond}$ and $\zeta=\nu_b(\mu^{-1,\diamond})_{\dom}$. Then $\zeta_1=w(\zeta')$ for some $w\in W$ and $\zeta'\leq\zeta$ dominant. Since $\zeta_1$ is $M^*$-dominant and $\zeta'$ is $G$-dominant, we may choose $w\in {}^{M^*}W$, i.e.~a shortest representative of $W_{M^*}w$. Similarly, we have $\nu_M=w'(\nu_{M,G})$ with $w'\in {}^{M^*}W$. Since $[b'']$ is Hodge-Newton decomposable, there is a standard Levi subgroup $L$ of $G$ containing the centralizer of $\nu_{M,G}$ and such that $(\nu_{M,G})^{\sharp_L}=\zeta^{\sharp_L}$. We have $\nu_M\leq_{M^*}\zeta_1.$ Thus $w'\in {}^{M^*}W$ implies that $\nu_{M,G}\leq (w')^{-1}(\zeta_1)=(w')^{-1}w(\zeta')$. In particular, $(\zeta')^{\sharp_L}\leq\zeta^{\sharp_L}=(\nu_{M,G})^{\sharp_L}\leq ((w')^{-1}w(\zeta'))^{\sharp_L}$ where the partial order on $\pi_1(L)_{\Gamma,\Q}$ is induced by the one on $X_*(A)_{\Q}$. Thus by Lemma \ref{lems455} (for $M_1$ the centralizer of $\zeta'$ and $M_2=L$) we have $(w')^{-1}w\in W_LW_{\zeta'}$, and  \begin{equation}\label{eqcl2}
(\zeta')^{\sharp_L}=\zeta^{\sharp_L}=(\nu_{M,G})^{\sharp_L}= ((w')^{-1}w(\zeta'))^{\sharp_L}.
\end{equation}

Conjugating by $w'$ we obtain $\nu_M^{\sharp_{{}^{w'}L}}=\zeta_1^{\sharp_{{}^{w'}L}}$ where ${}^{w'}L$ is the image of $L$ under conjugation by $w'$. We claim that ${}^{w'}L\cap {M^*}$ is also a standard Levi subgroup. Indeed, one first observes that ${}^{w'}L\cap {M^*}$ contains the fixed maximal torus. Let $\alpha$ be a positive root in ${}^{w'}L\cap {M^*}$ and assume that $\alpha=\alpha_1+\alpha_2$ for positive roots $\alpha_i$ in $G$. Since $M^*$ is standard, $\alpha_1$ and $\alpha_2$ are positive roots of $M^*$. Since $w'\in {}^{M^*}W$, the inverse images of $\alpha,\alpha_1$ and $\alpha_2$ under $w'$ are positive, and by assumption $(w')^{-1}(\alpha)$ is a root in $L$. But $L$ is standard, so $(w')^{-1}(\alpha_1), (w')^{-1}(\alpha_2)$ are also roots of $L$, which implies that $\alpha_1,\alpha_2$ are roots of ${}^{w'}L\cap {M^*}$, and finishes the proof of the claim. 

The subgroup $L$ contains the stabilizer of $\nu_{M,G}$, hence ${}^{w'}L\cap {M^*}$ contains the stabilizer of $\nu_M$ in ${M^*}$. To show that the Levi subgroup ${}^{w'}L\cap {M^*}$ is as in the definition of Hodge-Newton decomposability for $({M^*},\nu_b(w_0(\mu_1^{-1}))^{\diamond},\nu_{M})$, it remains to show that it is a proper subgroup of ${M^*}$. Assume that this is not the case. Then ${M^*}\subseteq {}^{w'}L$. Hence
$$\nu_{x,P}^{\sharp_{{}^{w'}L}}=\nu_M^{\sharp_{{}^{w'}L}}=\zeta_1^{\sharp_{{}^{w'}L}}=(w(\zeta'))^{\sharp_{{}^{w'}L}}= ((w')^{-1}w(\zeta'))^{\sharp_L}\overset{\eqref{eqcl2}}{=}(\zeta')^{\sharp_L},$$ where we identify $\pi_1(L)_{\Gamma,\Q}$ and $\pi_1({}^{w'}L)_{\Gamma,\Q}$ via conjugation by $w'$. Since $\nu_{x,P}$ is dominant, we have $\nu_{x,P}^{\sharp_L}\geq ((w')^{-1}(\nu_{x,P}))^{\sharp_L}=(\nu_{x,P})^{\sharp_{{}^{w'}L}}$. On the other hand, Lemma \ref{corhndecstrsm} together with Hodge-Newton indecomposability would imply $\nu_{x,P}^{\sharp_L}\leq \nu_{M}^{\sharp_L}\lneq (\zeta')^{\sharp_L}$, contradiction. This finishes the proof of Claim 2.

Assume that $[b'']$ is Hodge-Newton decomposable. From the Hodge-Newton decomposition (Prop.~\ref{prophonedec}) for $M$ we obtain a standard parabolic subgroup $\tilde P$ of $M$ together with a reduction of $[b_M]$ to its standard Levi subgroup $\tilde M$, and a reduction $\E_{b,x,\tilde P}$ of $\E_{b,x,M}$ to $\tilde P$ induced by the reduction of $\E_b$ to $\tilde P$ via \eqref{rem24}. By \cite[Proof of Lemma 6.4]{CFS}, $\E_{b,x,\tilde P}$ corresponds to a reduction of $\E_{b,x}$ to $\tilde P B$, which has the same slope vector $\tilde v$ as $\E_{b,x,\tilde P}$. It satisfies $v_x\lneq\tilde v$. This contradicts the maximality of $v_x$.

Thus $[b'']$ cannot be Hodge-Newton decomposable, and we have $[b']=[b'']$. Then $\E_{b,x,M}$ is a reduction of $\E_{b,x}$ to $M$. This finishes the proof of Step 1.\\

Consider now a proper parabolic subgroup $P$ of $G$ together with reductions $b_M$ and $b'_M$ of $b$ and $b'$ to $M$ such that $\av_{M}(\nu_{b'_M})$ is $G$-dominant and non-basic. Consider the subset $Z$ of $\Gr_{G,\mu,b}^{[b']}(C)$ of $x$ such that for the reduction $(\E_{b,x})_P$ of $\E_{b,x}$ associated with the reduction $\E_{b_M}^P$ of $\E_b$ we have $(\E_{b,x})_P\times^P M\simeq \E_{M,b'_M}$. Let $g$ be the element describing the reduction $b_M$ of $b$. Then $x\in Z(C)$ if and only if the following condition holds. Let $g^{-1}x\in S_{\lambda}(C)$ for some $\lambda$. Then $g^{-1}x\in \Gr_{G,\mu}$ implies $\lambda_{\dom}\leq(-\mu)_{\dom}$. Furthermore, from the computation of the slope vector in Lemma \ref{lemslopekappa} we obtain that $\av_M(-\lambda)$ non-basic and dominant. Finally, $\pr_M(g^{-1}x)\in \Gr_{M,(-\lambda)_{M-\dom},b_M}^{[b'_M]}$. Thus $Z$ can be described as the intersection of $\Gr_{G,\mu,b}^{[b']}$ with the union of the translates of all finitely many semi-infinite cells $S_{\lambda}$ as above and the preimage of $\Gr_{M,(-\lambda)_{M-\dom},b_M}^{[b']}$ under $\pr_M$. In particular, it is a locally spatial diamond.

By Step 1, the complement of the weakly admissible locus in $\Gr_{G,\mu,b}^{[b']}$ is a profinite union over all $Z$ for all possible choices of $P$ (finitely many), reductions $b_M$ (parametrized by $G_b(\Qp)/P_b(\Qp)$), and reductions $b'_M$ as above.

We consider the diagram of period maps
\begin{displaymath}
    \xymatrix{
       &\mathcal{M} ( G, \mu^{-1}, b, b')_{\infty}(C)\ar@{>>}[dl]_{\pi_{\dR}}\ar@{>>}[dr]^{\pi_{{\rm HT}}}& \\
\Gr_{G, \mu^{-1},b'}^{[b]} (C)&&\Gr_{G,\mu,b}^{[b']}(C)}
\end{displaymath}
where $\mathcal{M} ( G, \mu^{-1}, b, b')_{\infty}$ is the moduli space of modifications of type $\mu^{-1}$ between $\E_b$ and $\E_{b'}$. The basic Newton stratum $\Gr_{G, \mu^{-1},b'}^{[b]}$ is open in $\Gr_{G, \mu^{-1}}$ and hence of dimension $\langle 2\rho, \mu\rangle$. The map $\pi_{\dR}$ is pro-\'etale.

For each of the pro-finitely many subspaces $Z$ we consider $\pi_{\dR}(\pi_{\HT}^{-1}(Z))$. It is enough to show the following claim.

\noindent{\it Claim 3.} Any such $\pi_{\dR}(\pi_{\HT}^{-1}(Z))$ defines a subset which is a locally spatial diamond of dimension strictly less than $\dim \Gr_{G, \mu^{-1},b'}^{[b]} =\langle 2\rho, \mu\rangle$.

Fix some $Z$. Replacing $b$ by a $\sigma$-conjugate, we may assume that $b=b_M\in M(\breve F)$. Let $M'\subseteq M$ be the centralizer 
of the $M$-dominant Newton point of $b'_M$. Replacing $b'_M$ by an $M$-$\sigma$-conjugate, we may assume that $b'_M\in M'(\breve F)$. It also induces a reduction of $\E_{b'}$ to $P$.

Let $z\in \pi_{{\rm HT}}^{-1}(x)$ for some $x\in Z(C)$. Then $z$ corresponds to a modification between $\E_b$  and $\E_{b'}$ such that there is a reduction $\E'_{P,z}$ of $\E_{b'}$ to $P$ which by Proposition \ref{lemlamlem} is isomorphic to $\E_{b'}^P$ and such that $\E'_{P,z}$ and the parabolic reduction $\E_b^{P}$ correspond to each other via the modification.  Still by the same proposition, we can extend the isomorphism $\E'_{P,z}\cong\E_{b'}^P$ to an automorphism of $\E_{b'}$ (which we may still compose with automorphisms of $\E_{b'}^P$). Composing the modification corresponding to $z$ with this automorphism of $\E_{b'}$, we obtain a modification $z'$ between $\E_b^{P}$ and $\E_{b'}^P$. We denote the induced element of $\mathcal{M} ( G, \mu^{-1}, b, b')_{\infty}(C)$ again by $z'$.

Let $\lambda'$ be such that $\pi_{\dR}(z')$ is contained in the semi-infinite cell for $\lambda'$, and let 
$$S=\pi_{\dR}^{-1}(\coprod_{\lambda'}S_{\lambda'})$$
where the union is taken over all $\lambda'$ arising for the various $x'$ and choices of automorphisms of $\E_{b'}$.  By Lemma \ref{lemslopekappa}, we have $-\kappa_M(b'_M)-(\lambda')^{\sharp_M}=-\kappa_M(b)$ for all such $\lambda'$. Since $\pi_{\dR}$ is pro-\'etale, we have $\dim \pi_{\dR}(\pi_{\HT}^{-1}(Z))=\dim \pi_{{\rm HT}}^{-1}(Z)$ and $\dim S=\max_{\lambda'} \dim (S_{\lambda'}\cap \Gr_{G,\mu^{-1},b'}^{[b]})$. 

Changing the automorphism of $\E_{b'}$ by an automorphism of $\E_{b'}^P$ replaces $z'$ by another modification between $\E_b^{P}$ and $\E_{b'}^P$ that is also contained in $S$. Thus by the above consideration, $\pi_{\HT}^{-1}(Z)$ is contained in the image of $S\times^{\Aut(\E_{b'}^P)}\Aut(\E_{b'})\rightarrow \mathcal M(G,\mu^{-1},b,b')_{\infty}$ mapping a pair $(z',j)$ to the composition of the modification $z'$ with the automorphism $j$ of $\E_{b'}$.  

Altogether, we obtain
\begin{equation}\label{eqdimest}
\dim \pi_{\dR}(\pi_{\HT}^{-1}(Z))\leq \max_{\lambda'}\dim (S_{\lambda'}\cap \Gr_{G,\mu^{-1}})+\dim \Aut(\E_{b'})-\dim\Aut(\E_{b'}^P).
\end{equation}
We compute the second and third summand on the right hand side using the explicit description of $\Aut(\E_{b'})$ in \cite[III.5.1]{FS}. From loc.~cit.~we obtain $$\dim \Aut(\E_{b'})=\langle 2\rho, \nu_{b'}\rangle=\sum_{\alpha>0}\langle \alpha,\nu_{b'}\rangle$$ and $$\dim \Aut(\E_{b'}^P)=\sum_{\alpha>0,\langle \alpha,\nu_{b'_M}\rangle >0}\langle \alpha,\nu_{b'_M}\rangle.$$ Since $\nu_{b'}$ is the dominant representative of the $W$-orbit of $\nu_{b'_M}$, we have $$\sum_{\alpha>0}\langle \alpha,\nu_{b'}\rangle=\sum_{\alpha>0}|\langle \alpha,\nu_{b'_M}\rangle|.$$ Hence 
\begin{align*}
\dim \Aut(\E_{b'})/\Aut(\E_{b'}^P)&=\sum_{\alpha>0,\langle \alpha,\nu_{b'_M}\rangle<0}|\langle \alpha,\nu_{b'_M}\rangle|\\
&=\frac{1}{2}\sum_{\alpha>0}\left(|\langle \alpha,\nu_{b'_M}\rangle|-\langle \alpha,\nu_{b'_M}\rangle\right)\\
&=\frac{1}{2}\sum_{\alpha>0}\langle \alpha,\nu_b-\nu_{b'_M}\rangle\\
&=\langle\rho,\nu_{b'}-\nu_{b'_M}\rangle.
\end{align*}
We bound the first summand on the right hand side of \eqref{eqdimest} for each $\lambda'$ separately. For this, we use that we are in the minuscule case. Then $\lambda'=\mu^{ww_0}$ for some $w\in W$. The Bialinicky-Birula isomorphism identifies $S_{\lambda'}\cap \Gr_{G,\mu^{-1}}$ with $\F(G,\mu^{-1})^{w,\diamond}$, the Schubert cell for $w$ in the flag variety for $(G,\mu^{-1}_{\dom})$. The dimension of this Schubert cell is equal to $\ell(w)=\langle \rho,\mu^{ww_0}+\mu\rangle$, hence $\dim S_{\lambda'}\cap \Gr_{G,\mu^{-1}}=\langle \rho,\lambda'+\mu\rangle$.

Altogether we have 
$$\dim \pi_{\dR}(\pi_{\HT}^{-1}(Z))= \dim \pi_{{\rm HT}}^{-1}(Z)\leq \langle \rho,\lambda'+\mu\rangle+\langle \rho,\nu_{b'}-\nu_{b'_M}\rangle,$$
and it remains to show  that
\begin{equation*}
 \langle \rho,\lambda'+\mu\rangle+\langle \rho,\nu_{b'}-\nu_{b'_M}\rangle<\langle 2\rho,\mu\rangle.
\end{equation*}

Let $P'$ be the standard parabolic subgroup with Levi factor $M'$ and recall that we assumed $b'_M\in M'(\breve F)$. For some $x\in S_{\lambda'}\cap \Gr_{G,\mu^{-1}}(C)$ consider the modification $\E_{b',x}\cong\E_b$ and the reduction $(\E_{b',x})_{P'}$ induced by the reduction $\E_{b'_M}^{P'}$. Since it is a reduction of the semi-stable bundle $\E_b$, we obtain that its slope vector is $\kappa_{M'}(b'_{M'})-(\lambda')^{\sharp_{M'}}\leq_{M'} -\nu_{b}=0$, where we use that $G$ is adjoint. Hence this difference is a non-positive linear combination of the images of positive coroots (of $M$). Let $\tilde \lambda$ be the $M$-dominant representative in the $W$-orbit of $\lambda'$. Then (for example by Lemma \ref{lems455}(1)) $\kappa_{M'}(b'_{M'})\leq_{M'} {\tilde\lambda}^{\sharp_{M'}}$. Since $\nu_{b'_{M}}$ is central in $M'$ and $\tilde \lambda$ is $M'$-dominant, this implies $\nu_{b'_{M}}\leq \tilde\lambda$. More precisely (since they have the same image in $\pi_1(M)_{\Gamma,\Q}$), their difference is a non-negative rational linear combination of positive coroots of $M$. Let $w\in W$ with $\nu_{b'_M}^w=\nu_{b'}$ dominant. Since $\nu_{b'_M}$ is $M$-dominant, we may choose $w\in W^{M}$. In particular, conjugation by $w$ maps positive coroots in $M$ to positive coroots. Hence a coroot that is a sum of $l$ simple coroots for $M$ (such that the pairing with $\rho$ is $l$) is mapped to a sum of $l$ positive coroots (whose pairing with $\rho$ is then greater than or equal to $l$). Altogether, we obtain
\begin{eqnarray*}
\langle \rho,\lambda'-\nu_{b'_M}\rangle&\leq&\langle \rho,\tilde\lambda-\nu_{b'_M}\rangle\\
&\leq&\langle \rho,w(\tilde\lambda-\nu_{b'_M})\rangle\\
&\leq&\langle \rho,\tilde\lambda_{\dom}-\nu_{b'}\rangle\\
&\leq&\langle \rho,\mu-\nu_{b'}\rangle.
\end{eqnarray*}
If equality holds, the first equality implies that $\lambda'=\tilde \lambda$. Let $\tilde M\supseteq M'$ be the smallest standard Levi subgroup such that $\tilde\lambda-\nu_{b_{M'}}$ is in the $\Q$-vector space generated by the coroots of $\tilde M$. Then equality in the second inequality above means that $\tilde M^{w}$ is again a standard Levi subgroup. Furthermore, $w(\tilde\lambda-\nu_{b'_M})={\tilde\lambda}^w-\nu_{b'}$ is in the $\Q$-vector space generated by the coroots of $\tilde M^{w}$. The other equalities prove that ${\tilde\lambda}^w=\tilde\lambda_{\dom}=\mu$. This is in contradiction to Hodge-Newton indecomposability (for $\tilde M^w$).
\end{proof}
\begin{remark}
Most of the above argument also works for non-minuscule $\mu$. The only place where we need the assumption to be minuscule is the bound on the dimension of $S_{\lambda}\cap \Gr_{G,\mu}$ by $\langle\rho, \lambda+\mu\rangle$. The proof would generalize verbatim to non-minuscule $\mu$ once one can show such a bound for all $\lambda$ and $\mu$.
\end{remark}

\begin{lemma}\label{lems455}
Let $G$ be quasi-split and fix a maximal torus and a Borel subgroup containing it. Let $P_1,P_2$ be standard parabolic subgroups of $G$ with standard Levi factors $M_1,M_2$ and unipotent radicals $N_1,N_2$.
\begin{enumerate}
\item  Let $\nu\in X_*(T)_{\mathbb Q}$ be dominant. For every $w\in W,$ $$\nu\geq w^{-1}(\nu)$$ in $X_*(T)_{\mathbb Q}$.
\item Let $\nu\in X_*(T)_{\mathbb Q}$ be dominant $P_1$-regular. If $w\in {}^{M_1}W^{M_2}$ is such that the images of $\nu$ and $w^{-1}(\nu)$ in $\pi_1(M_2)_{\Q}$ agree, then $w=1$.
\end{enumerate}
\end{lemma}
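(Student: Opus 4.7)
The plan for (1) is a standard induction on $\ell(w)$. Writing $w = s_\alpha \tilde w$ with $\alpha$ simple and $\ell(\tilde w) = \ell(w) - 1$, direct computation yields $\nu - w^{-1}\nu = (\nu - \tilde w^{-1}\nu) + \langle \alpha, \nu\rangle\, \tilde w^{-1}\alpha^\vee$. The first summand is $\succeq 0$ by induction, $\langle\alpha,\nu\rangle \geq 0$ by dominance of $\nu$, and the length condition $\ell(s_\alpha \tilde w) > \ell(\tilde w)$ implies $\tilde w^{-1}\alpha$ is a positive root, so the second summand is a non-negative multiple of a positive coroot.

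For (2), my plan is to use (1) in both directions to deduce $w\nu = \nu$, and then invoke $P_1$-regularity. From (1), the element $\xi := \nu - w^{-1}\nu$ is a non-negative $\Q$-combination of positive coroots, and by hypothesis $\xi \in \langle \Phi_{M_2}^\vee\rangle_\Q$. The first substantive step is a support argument: pairing with a fundamental weight $\omega_j$ for $\alpha_j \notin M_2$ kills $\langle \Phi_{M_2}^\vee\rangle_\Q$ but is strictly positive on every positive coroot whose simple-coroot expansion involves $\alpha_j^\vee$, so non-negativity forces all coefficients attached to positive coroots outside $M_2$ to vanish. Hence $\xi$ is a non-negative $\Q$-combination of positive coroots of $M_2$.

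Next I would apply $w$ and use that $w \in W^{M_2}$ means $w\alpha_i > 0$ for every simple root $\alpha_i$ of $M_2$, which propagates (via writing any positive root of $M_2$ as a non-negative integer combination of simple roots of $M_2$) to the statement that $w$ sends every positive coroot of $M_2$ to a positive coroot of $G$. Therefore $w\nu - \nu = w\xi \succeq 0$, while part (1) applied to $w^{-1}$ gives $\nu - w\nu \succeq 0$; combining, $w\nu = \nu$. Finally, $P_1$-regularity says the $W$-stabilizer of $\nu$ equals $W_{M_1}$, so $w \in W_{M_1}$; since $w \in {}^{M_1}W$ is a minimal-length representative for $W_{M_1}\backslash W$, this forces $w = 1$.

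The main obstacle is the support step isolating $\xi$ on coroots of $M_2$; this is what really uses the assumption on images in $\pi_1(M_2)_\Q$. The remaining moves are essentially a packaging of (1) with the combinatorial characterizations of ${}^{M_1}W^{M_2}$ and $P_1$-regularity.
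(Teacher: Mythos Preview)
Your argument is correct. Part (1) is the same standard fact the paper cites. For part (2), your route and the paper's diverge in the middle but reconverge at the end.

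The paper passes to the $M_2$-dominant representative $\nu'$ of $w^{-1}\nu$ in its $W_{M_2}$-orbit, argues (from both elements being $M_2$-dominant $W$-conjugates) that $\nu-\nu'$ is a non-negative combination of coroots in $N_2$, and then invokes the $\pi_1(M_2)$ hypothesis to force $\nu=\nu'$; this gives $w^{-1}\nu\in W_{M_2}\nu$, and the double-coset condition $w\in{}^{M_1}W^{M_2}$ (together with $P_1$-regularity, used implicitly to identify the stabilizer) finishes. You instead stay with $w^{-1}\nu$ itself: the $\pi_1(M_2)$ hypothesis together with (1) pins $\xi=\nu-w^{-1}\nu$ down as a non-negative combination of simple coroots of $M_2$, and then the key extra move is to apply $w$ and use $w\in W^{M_2}$ (so $w$ sends $\Phi_{M_2}^+$ into $\Phi^+$) to get $w\nu\succeq\nu$, whence $w\nu=\nu$ by (1) again. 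Your approach is a bit more self-contained, since it sidesteps the geometric claim about $N_2$-coroots that the paper states without elaboration; the paper's approach, in exchange, lands directly on $w^{-1}\nu\in W_{M_2}\nu$ and makes the role of the double-coset condition more visible. Either way the $P_1$-regularity and the minimal-representative condition do the same work at the end.
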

Here, ${}^{M_1}W^{M_2}$ denotes the subset of elements of $W$ that are shortest representatives of their $W_{M_1}\times W_{M_2}$-double cosets.
\begin{proof}
(1) is shown in \cite[Lemma 4.8]{S} and follows immediately from the assumption that $\nu$ is dominant. For (2) we replace $\nu$ by a suitable multiple and may thus assume that $\nu\in X_*(T)$, and that the images of $\nu$ and $\nu':=w^{-1}(\nu)$ in $\pi_1(M_2)$ agree. Since $\nu$ and $\nu'$ are in the same $W$-orbit and $w^{-1}\in {}^{M_2}W$, there is no root hyperplane for $M_2$ separating the two elements. Therefore, $\nu-\nu'$ is a non-negative linear combination of coroots $\alpha^{\vee}$ for roots $\alpha$ of $T$ in $N_2$. Since $\nu=\nu'$ in $\pi_1(M_2)$, this implies that $\nu=\nu'$, hence $w^{-1}(\nu)\in W_{M_2}(\nu)$. Since $w\in{}^{M_1}W^{M_2},$ this implies $w=1$.
\end{proof}

In particular, we obtain a new proof of the classification of data $(G,\mu,b)$ for which the admissible locus coincides with the weakly admissible locus. This has previously been shown by Chen, Fargues and Shen \cite[Thm.~6.1]{CFS}.
\begin{cor}
Let $b\in G(\breve{\Qp})$ be basic, and let $\{\mu\}$ be a minuscule conjugacy class of cocharacters of $G$. Then $\Gr_{G,\mu}^{\a}=\Gr_{G,\mu}^{\wa}$ if and only if $(G,\mu,b)$ is fully Hodge-Newton decomposable.
\end{cor}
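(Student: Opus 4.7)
The plan is to deduce this corollary almost immediately from Theorem \ref{thmmain2}, using the standard reformulation of ``fully Hodge-Newton decomposable'' as the condition that every non-basic element of $\BGmub$ is Hodge-Newton decomposable. The key observations are that the admissible locus $\Gr_{G,\mu,b}^{\a}$ coincides with the Newton stratum $\Gr_{G,\mu,b}^{[b_0']}$ for the unique basic class $[b_0']\in \BGmub$, and that this stratum is always contained in the weakly admissible locus (by Lemma \ref{lem47}, since for $x$ in this stratum $\E_{b,x}$ is semi-stable). Moreover, by the decomposition of $\Gr_{G,\mu}$ into Newton strata,
\[
\Gr_{G,\mu,b}^{\wa} = \coprod_{[b']\in \BGmub}\bigl(\Gr_{G,\mu,b}^{\wa}\cap \Gr_{G,\mu,b}^{[b']}\bigr).
\]

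For the forward direction, assume $\Gr_{G,\mu,b}^{\a}=\Gr_{G,\mu,b}^{\wa}$. Then for every non-basic $[b']\in \BGmub$ we have $\Gr_{G,\mu,b}^{\wa}\cap \Gr_{G,\mu,b}^{[b']}=\emptyset$. By Theorem \ref{thmmain2} this forces $[b']$ to be Hodge-Newton decomposable, so $(G,\mu,b)$ is fully Hodge-Newton decomposable. Conversely, assume full Hodge-Newton decomposability; then every $[b']\in \BGmub\setminus\{[b_0']\}$ is Hodge-Newton decomposable, so again by Theorem \ref{thmmain2} the weakly admissible locus is disjoint from each non-basic Newton stratum. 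Combined with the inclusion $\Gr_{G,\mu,b}^{[b_0']}\subseteq \Gr_{G,\mu,b}^{\wa}$ noted above and the stratification of $\Gr_{G,\mu}$, we obtain $\Gr_{G,\mu,b}^{\wa}=\Gr_{G,\mu,b}^{[b_0']}=\Gr_{G,\mu,b}^{\a}$.

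There is no serious obstacle: this is a genuine corollary of Theorem \ref{thmmain2}, and the only point worth being careful about is to record that the basic Newton stratum always lies inside the weakly admissible locus and that the admissible locus for minuscule $\mu$ is exactly this basic stratum, both of which have already been established in the preceding sections.
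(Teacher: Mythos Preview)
Your proof is correct and follows essentially the same approach as the paper: both reduce the equality $\Gr_{G,\mu,b}^{\a}=\Gr_{G,\mu,b}^{\wa}$ to the statement that no non-basic Newton stratum meets the weakly admissible locus, and then invoke Theorem \ref{thmmain2}. You are simply more explicit than the paper in spelling out the two background facts (that the admissible locus is the basic Newton stratum and that this stratum lies in $\Gr_{G,\mu,b}^{\wa}$ via Lemma \ref{lem47}), which the paper leaves implicit.
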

Here, as in \cite{GHN}, a triple $(G,\mu,b)$ is fully Hodge-Newton decomposable if every non-basic element of $\BGmub$ is Hodge-Newton decomposable.
\begin{proof}
We have $\Gr_{G,\mu}^{\a}=\Gr_{G,\mu}^{\wa}$ if and only if the weakly admissible locus does not intersect any non-basic Newton stratum. By the theorem this is the case if and only if every other non-empty Newton stratum is Hodge-Newton decomposable.
\end{proof}

\bibliographystyle{alpha}
\bibliography{references}

\end{document}